\def\PP{\mathbb{P}}
\newcommand{\Psf}{\mathsf{P}}
\def\U{\mathcal{U}}
\def\L{\mathcal{L}}
\def\R{\mathcal{R}}
\def\P{\mathcal{P}}
\def\G{\mathcal{G}}
\def\M{\mathcal{M}}
\def\Nc{\mathcal{N}}
\def\T{\mathcal{T}}
\def\S{\mathcal{S}}
\newcommand{\uh}{\upharpoonright}
\newcommand{\ISig}{\mathsf{I}\Sigma^0}
\newcommand{\IPi}{\mathsf{I}\Pi^0}
\newcommand{\IDelta}{\mathsf{I}\Delta^0}
\newcommand{\BSig}{\mathsf{B}\Sigma^0}
\newcommand{\BPi}{\mathsf{B}\Pi^0}
\newcommand{\RCA}[0]{\mathsf{RCA}}
\newcommand{\WKL}[0]{\mathsf{WKL}}
\newcommand{\ACA}[0]{\mathsf{ACA}}
\newcommand{\CAC}[0]{\mathsf{CAC}}
\newcommand{\ADS}[0]{\mathsf{ADS}}
\newcommand{\SADS}[0]{\mathsf{SADS}}
\newcommand{\EM}[0]{\mathsf{EM}}
\newcommand{\HEM}[0]{\mathsf{HEM}}
\newcommand{\RT}[0]{\mathsf{RT}}
\newcommand{\COH}[0]{\mathsf{COH}}
\newcommand{\BRT}[0]{\mathsf{BRT}}
\newcommand{\NN}[0]{\mathbb{N}}
\newcommand{\Definable}[2]{#1\mbox{-}\mathsf{Def}(#2)}
\newcommand{\card}{\operatorname{card}}
\def\qt#1{``#1''}%
\title{The reverse mathematics of\\ bounded Ramsey's theorem for pairs}
\date{\today}
\newtheorem*{statement}{Statement}
\newtheorem{theorem}{Theorem}
\numberwithin{theorem}{section}
\newtheorem{maintheorem}[theorem]{Main Theorem}
\newtheorem{lemma}[theorem]{Lemma}
\newtheorem{question}[theorem]{Question}
\newtheorem{proposition}[theorem]{Proposition}
\newtheorem{definition}[theorem]{Definition}
\newtheorem{corollary}[theorem]{Corollary}
\newtheorem*{rep@theorem}{\rep@title}
\newcommand{\newreptheorem}[2]{%
\newenvironment{rep#1}[1]{%
 \def\rep@title{#2 \ref{##1}}%
 \begin{rep@theorem}}%
 {\end{rep@theorem}}}
\author{Quentin Le Houérou \and Ludovic Patey}
\begin{document}

\maketitle

\begin{abstract}
In this article, we study a degenerate version of Ramsey's theorem for pairs and two colors ($\RT^2_2$), in which the homogeneous sets for color~1 are of bounded size. By $\RT^2_2$, it follows that every such coloring admits an infinite homogeneous set for color~0. This statement, called $\BRT^2_2$, is known to be computably true, that is, every computable instance admits a computable solution, but the known proofs use $\Sigma^0_2$-induction ($\ISig_2$). We prove that $\BRT^2_2$ follows from the Erd\H{o}s-Moser theorem but not from the Ascending Descending sequence principle, and that its computably true version is equivalent to~$\ISig_2$ over~$\RCA_0$.
\end{abstract}

\section{Introduction}

Ramsey's theorem for pairs and two colors ($\RT^2_2$) is a combinatorial theorem which states that every coloring $f : [\NN]^2 \to 2$ admits an infinite $f$-homogeneous set. Here, given a set~$X$, $[X]^2$ denotes the set of unordered pairs over~$X$ and a set $H \subseteq X$ is \emph{$f$-homogeneous} if all the pairs in $[H]^2$ have the same color by~$f$. The statement $\RT^2_2$ was extensively studied from a meta-mathematical viewpoint, both in reverse mathematics and computability theory. In particular, Specker~\cite{specker_ramseys_1971} proved that $\RT^2_2$ is not \emph{computably true}, in that there is a computable coloring $f : [\NN]^2 \to 2$ with no infinite computable $f$-homogeneous set.

Many applications of Ramsey's theorem in mathematics yield degenerate colorings. In some cases, the specific features of the coloring can be exploited to yield simpler solutions, in a computable or proof-theoretic sense. For instance, a coloring $f : [\NN]^2 \to 2$ is \emph{transitive} for some color~$i < 2$ if for every~$x < y < z$ such that $f(x, y) = f(y, z) = i$, then $f(x, z) = i$. The restriction of $\RT^2_2$ to colorings which are transitive for one or both colors yield the Chain AntiChain ($\CAC$) and the Ascending Descending Sequence ($\ADS$) principles, respectively. The statements $\RT^2_2$, $\CAC$, and $\ADS$ form a strictly decreasing sequence in reverse mathematics~(see \cite[Corollary 3.12]{hirschfeldt_combinatorial_2007} and \cite[Theorem 1.7]{manuel_lerman_separating_2013}). 

In this paper, we study another common degenerate version of Ramsey's theorem for pairs and two colors, in which the homogeneous sets for color~1 have bounded size. By $\RT^2_2$, it follows that there exists an infinite homogeneous set for color~0. This is part of a larger project started with and driven by Frittaion, who introduced bounded Ramsey's theorem for colorings of arbitrary tuples. This project was pursued by Soldà~\cite[Section 4.1]{solda2021calibrating}, and then independently by Belanger, Shafer, and Yokoyama (unpublished).

\begin{statement}[Bounded $\RT^2_2$]
$\BRT^2_2$ is the statement \qt{For every coloring $f : [\NN]^2 \to 2$ and every~$\ell \in \NN$ such that there is no $f$-homogeneous set for color~1 of size~$\ell$, there is an infinite $f$-homogeneous set for color~0.}
\end{statement}

The statement $\BRT^2_2$ is significantly weaker than~$\RT^2_2$. From a computational viewpoint, contrary to Ramsey's theorem for pairs, $\BRT^2_2$ is computably true. However, as we shall see, the existence of a computable solution requires $\Sigma^0_2$-induction. Since $\RT^2_2$ does not imply the $\Sigma^0_2$-induction scheme~\cite{chong2017inductive} in reverse mathematics, the pure existence of a solution to an instance of $\BRT^2_2$ is strictly weaker than the existence of a computable solution. We shall study this bounded version of Ramsey's theorem for pairs using the framework of reverse mathematics.

\subsection{Reverse mathematics}

\emph{Reverse mathematics} is a foundational program whose goal is to find optimal axioms to prove theorems from ordinary mathematics. It uses the framework of subsystems of second-order arithmetic, with a base theory, $\RCA_0$, capturing \qt{computable mathematics}. The theory $\RCA_0$ contains the theory $\Psf^-$ of discretely ordered commutative semirings, together with the $\Delta^0_1$-comprehension scheme and the $\Sigma^0_1$-induction scheme. We now give a more precise account of those schemes.

\begin{definition}
Given a family of formulas $\Gamma$, the \emph{$\Gamma$-induction scheme} is defined for every $\Gamma$-formula $\varphi$ as
$$
\varphi(0) \wedge \forall x (\varphi(x) \to \varphi(x+1)) \to \forall y \varphi(y)
$$
\end{definition}
\noindent
We write $\mathsf{I}\Gamma$ for the theory $\Psf^-$ together with the $\Gamma$-induction scheme. The arithmetic hierarchy induces a hierarchy of induction schemes. By Paris and Kirby~\cite{paris_n-collection_1978}, $\IPi_n$ and $\ISig_n$ are equivalent for every $n \geq 0$, while $\ISig_{n+1}$ is strictly stronger than~$\ISig_n$.
\bigskip

The induction scheme for arithmetic formulas is equivalent modulo $\Psf^- + \ISig_0$ to the following collection (or bounding) scheme for the same family of formulas:

\begin{definition}
Given a family of formulas $\Gamma$, the \emph{$\Gamma$-collection} (or $\Gamma$-bounding) scheme is defined for every $\Gamma$-formula $\varphi$ as
$$
\forall a [(\forall x < a) \exists y \varphi(x, y) \rightarrow \exists b (\forall x < a)(\exists y < b) \varphi(x, y)]
$$
\end{definition}
\noindent
We write $\mathsf{B}\Gamma$ for the theory $\Psf^-$ together with the $\Sigma^0_0$-induction scheme and the $\Gamma$-collection scheme. By Paris and Kirby~\cite{paris_n-collection_1978}, $\BSig_{n+1}$ and $\BPi_n$ are equivalent for every $n \geq 1$, and the following hierarchy is strict: 
$$
\ISig_1 < \BSig_2 < \ISig_2 < \BSig_3 < \cdots
$$
\bigskip

Being a $\Delta^0_n$-predicate is a semantic notion, which depends either of a theory, or a structure. Because of this, one usually defines the $\Delta^0_n$-comprehension scheme using the following syntactical artifact.

\begin{definition}
The \emph{$\Delta^0_n$-comprehension scheme} is defined for every $\Sigma^0_n$-formula $\varphi$ and every $\Pi^0_n$-formula $\psi$ as 
$$
\forall x(\varphi(x) \leftrightarrow \psi(x)) \to \exists X \forall y (y \in X \leftrightarrow \varphi(y))
$$
\end{definition}

The theories $\RCA_0 + \ISig_n$ and $\RCA_0 + \BSig_n$ being $\Pi^1_1$-conservative extensions of $\ISig_n$ and $\BSig_n$, respectively, the previous hierarchy remains strict over $\RCA_0$.

Since the beginning of reverse mathematics, many theorems have been studied. It appeared that most theorems are either provable over $\RCA_0$, or provably equivalent over $\RCA_0$ to one of four systems of axioms, linearly ordered by logical implication~\cite{simpson_subsystems_2009}. However, some counter-example exist, the most important one being Ramsey's theorem for pairs and two colors~\cite{seetapun1995strength,liu2012rt22}. Since then, the study of Ramsey's theorem, and more generally of combinatorial theorems in reverse mathematics, became a very active research field.

\subsection{Main contributions}

As mentioned, this article focuses on the study of a degenerate version of $\RT^2_2$, in which the homogeneous sets for color~1 have bounded size.
By an old argument of Hirst~\cite[Theorem 6.8]{hirst1987combinatorics}, $\RCA_0 \vdash \BRT^2_2 \to \BSig_2$. Belanger and Yokoyama (unpublished) gave independently two proofs of $\RCA_0 \vdash \ISig_2 \to \BRT^2_2$ (see \Cref{prop:isig2-urt22}). On the other hand, $\BRT^2_2$ follows from $\RT^2_2$, which is known not to imply $\ISig_2$ over~$\RCA_0$ by Chong, Slaman, and Yang~\cite{chong2017inductive}. It follows that $\RCA_0 + \BRT^2_2 \not\vdash \ISig_2$.
Our first main contribution is the following theorem:

\begin{maintheorem}[Short version]\label[maintheorem]{main:bounded-non-comp-non-isigma2}
Every model of $\RCA_0 + \BSig_2 + \neg \ISig_2$ contains an instance~$f$ of $\BRT^2_2$ with no $\Delta_1(f)$-definable solution. 
\end{maintheorem}

It follows that $\RCA_0 + \BSig_2 \not \vdash \BRT^2_2$, hence the strength of $\BRT^2_2$ lies strictly in between $\ISig_2$ and $\BSig_2$. Furthermore, the statement \qt{$\BRT^2_2$ is computably true} is equivalent to $\ISig_2$ over~$\RCA_0$. Here, given a $\Pi^1_2$-problem~$\Psf$, the statement \qt{$\Psf$ is computably true} means that every $\Psf$-instance~$X$ admits a $\Delta_1(X)$-definable solution. This situation already appears in the literature, with the tree theorem for singletons~\cite{corduan_reverse_2010,chong_strength_2020} and bounded-width weak K\"onig's lemma~\cite{simpson2021fragments}. Over $\RCA_0 + \BSig_2$, both statements are strictly in between $\ISig_2$ and $\BSig_2$, and their computably true version is equivalent to $\ISig_2$.
\bigskip

By Bovykin and Weiermann~\cite{bovykin2017strength}, there exists a natural decomposition of Ramsey's theorem for pairs into the Erd\H{o}s-Moser theorem ($\EM$) and the Ascending Descending Sequence principle ($\ADS$). Over $\RCA_0$, the Erd\H{o}s-Moser theorem can be seen as the statement that every coloring $f : [\NN]^2 \to 2$ admits an infinite sub-coloring which is transitive for both colors, while $\ADS$ states that every such coloring admits an infinite $f$-homogeneous set. From a reverse mathematical viewpoint, these statements almost play the role of a split pair, in that the strongest known common consequences are $\BSig_2$ and the existence of hyperimmune functions (\cite{kreuzer2012primitive,hirschfeldt_combinatorial_2007,manuel_lerman_separating_2013}). Soldà~\cite[Theorem 4.1.10]{solda2021calibrating} proved that $\BRT^2_2$ is a consequence of $\EM$, and we prove that $\BRT^2_2$ does not follow from $\ADS$ over $\RCA_0$. More precisely, we prove the following theorem:

\begin{maintheorem}\label[maintheorem]{main:bsig2-not-isig2-ads-not-urt22}
Every countable topped model of $\RCA_0 + \BSig_2 + \neg \ISig_2$ can be $\omega$-extended into a model of $\RCA_0 + \ADS + \neg \BRT^2_2$.    
\end{maintheorem}

Here, a model is \emph{topped} if its second-order part is of the form $\{ Z : Z \leq_T Y \}$ for some fixed set~$Y$, and an \emph{$\omega$-extension} of model~$\M$ is a model~$\Nc$ obtained from~$\M$ by adding sets, but keeping the integers part unchanged. This proves that $\RCA_0 + \ADS \not \vdash \BRT^2_2$ in a strong sense: $\RCA_0 + \ADS + \neg \BRT^2_2$ is $\Pi^1_1$-conservative over $\RCA_0 + \BSig_2 + \neg \ISig_2$. This separation proof involves a formalized preservation of a combinatorial property by $\ADS$, using an effective construction in a weak model of arithmetic.

% \subsection{Definitions and notation}

% \ludovic{TODO, define models of arithmetic, omega-extension, cut, give pointers to Hadjek and Pudlak. Mention the equivalence between $\RT^1$ and $\BSig_2$.}

\section{$\BRT^2_2$ and partition theorems on partial orders}

This section is essentially a survey in which we give the best known proof that $\BRT^2_2$ is computably true, and relate bounded Ramsey's theorem for pairs to two well-known theorems from graph theory, namely, Dilworth's theorem and Mirsky's theorem. Both theorems and variants have already been studied in reverse mathematics.

As mentioned in the introduction, Hirst~\cite[Theorem 6.8]{hirst1987combinatorics} proved that $\BRT^2_2$ implies $\BSig_2$ over~$\RCA_0$, thanks to the equivalence between $\BSig_2$ and the infinite pigeonhole principle ($\RT^1$). This principle states, for every $k \in \NN$ and every coloring $g : \NN \to k$, the existence of an infinite $g$-homogeneous set, that is, an infinite set~$H \subseteq \NN$ on which~$g$ is constant. Given such a coloring $g : \NN \to k$, let $f(x, y) = 0$ if $g(x) = g(y)$ and 1 otherwise. There are no $f$-homogeneous set for color~1 of size~$k+1$, and any infinite $f$-homogeneous set for color~0 is $g$-homogeneous.

Versions of Ramsey's theorem for pairs in which all the infinite homogeneous sets have the same color were mentioned in reverse mathematics by Jockusch~\cite{jockusch_ramseys_1972} under the name \qt{unbalanced}, but bounded Ramsey's theorem was first introduced by Frittaion (unpublished), who proved $\BRT^2_2$ over $\RCA_0 + \ISig_3$. The upper bound was improved by Belanger and Yokoyama (unpublished) independently with two different proofs. The one by Yokoyama can be found in Soldà~\cite[Lemma 4.1.9]{solda2021calibrating}. We now give the proof of Belanger.

\begin{proposition}[Belanger and Yokoyama]\label[proposition]{prop:isig2-urt22}
$\RCA_0 \vdash \ISig_2 \to \BRT^2_2$.
\end{proposition}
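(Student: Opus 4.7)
My plan is to establish a key lemma by a contradiction argument using $\Sigma^0_2$-induction, and then iterate it to produce the desired infinite $0$-homogeneous set. Throughout I fix $f : [\NN]^2 \to 2$ and $\ell$ such that no $1$-homogeneous set has size $\ell$, and I work inside $\RCA_0 + \ISig_2$.

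The \emph{Key Lemma} I would prove is: for every infinite $X \subseteq \NN$ containing no $1$-homogeneous subset of size $\ell$, there exists $y \in X$ such that $F_X(y) := \{z \in X : z > y \wedge f(y,z) = 0\}$ is infinite. The proof is by contradiction: assuming every $y \in X$ has $F_X(y)$ finite gives, for each $y$, a bound $B(y)$ above which every $z \in X$ satisfies $f(y,z) = 1$. I would then inductively construct a $1$-homogeneous sequence $y_1 < \cdots < y_\ell$ in $X$ together with such bounds $B_1, \dots, B_\ell$, picking $y_{i+1} \in X$ above $\max(y_i, B_1, \dots, B_i)$ at each stage (possible since $X$ is infinite) and obtaining $B_{i+1}$ by $\Sigma^0_2$-minimisation. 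The induction hypothesis has the shape $\exists \vec y, \vec B\,\varphi(\vec y, \vec B)$ with $\varphi \in \Pi^0_1$, so it is $\Sigma^0_2$, and $\ISig_2$ drives it up to $i = \ell$, contradicting the hypothesis on $X$.

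Granted the Key Lemma, I would set $X_0 = \NN$ and iteratively define $y_i$ via the lemma applied to $X_i$, taking $X_{i+1} := F_{X_i}(y_i)$. Each $X_{i+1}$ is an infinite subset of $X_i$ that still contains no $1$-homogeneous set of size $\ell$, so the construction never stalls and $S := \{y_i : i \in \NN\}$ is infinite and $0$-homogeneous by design.

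The main technical obstacle is to carry out this infinite iteration as a legitimate second-order construction inside $\RCA_0 + \ISig_2$. The naive selection rule "$y_i$ is the least $y \in X_i$ with $F_{X_i}(y)$ infinite" is defined by a $\Pi^0_2$ predicate and the associated recursion is not directly available. I expect the fix to exploit that the Key Lemma's proof is itself constructive: a "good" $y_i$ is found among the first $\ell$ candidates of the greedy procedure in the lemma's contradiction argument, so the choice function can be cast as a $\Sigma^0_2$-definable total function on the finite data $(f, y_0, \dots, y_{i-1})$. Primitive recursion over such $\Sigma^0_2$-total step functions is available under $\ISig_2$, so the sequence $(y_i)$ exists as a total $\Sigma^0_2$-definable function and $S$ is then extracted by $\Delta^0_1$-comprehension from its graph.
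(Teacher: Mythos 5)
Your Key Lemma and its $\Sigma^0_2$-induction proof are fine, but the construction does not go through where you yourself flag the difficulty, and the proposed fix does not repair it. First, the step function is not $\Sigma^0_2$-definable as claimed: to locate the "good" $y_i$ among the first $\ell$ candidates of the greedy procedure you must decide, for each candidate $y$, whether $F_{X_i}(y)$ is infinite, which is a $\Pi^0_2$ question; the graph of the selection is thus a Boolean combination of $\Sigma^0_2$ predicates, and asserting the existence of a correct run of length $i$ is of complexity roughly $\Sigma^0_3$, so the totality of the recursion is not an instance of "primitive recursion over $\Sigma^0_2$-total functions" available under $\ISig_2$. Second, and more fundamentally, even granting that the sequence $(y_i)$ is definable and provably total, $\RCA_0$ provides only $\Delta^0_1$-comprehension: the graph of your recursion is at best $\Delta^0_2$ (really worse), so the set $S=\{y_i : i\in\NN\}$ need not belong to the second-order part of the model, and "extracted by $\Delta^0_1$-comprehension from its graph" is exactly the unjustified step. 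Since $\BRT^2_2$ asserts the existence of an infinite $0$-homogeneous \emph{set}, an argument whose output is inherently $\Delta^0_2$-definable relative to $f$ cannot conclude inside $\RCA_0 + \ISig_2$; this is precisely why the known proofs are shaped so that the solution is $\Delta_1(f)$-definable, with all the non-uniformity confined to finitely many number parameters.

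The paper's proof (Belanger's) does this as follows: form the computable tree of finite sequences that are $f$-homogeneous for color~$1$, arranged so that the immediate children of any node are pairwise $0$-colored; the hypothesis bounds its depth by $\ell$, $\BSig_2$ yields a level with infinitely many nodes, $\ISig_2$ picks the least such level, and then some node at the previous level has infinitely many children, whose set is $\Delta^0_1(f)$-definable from the finite node and is the desired $0$-homogeneous set. Your iterative scheme would need to be restructured along these lines (collapsing the $\omega$-length iteration into a single finite object found by a $\Sigma^0_2$ least-number argument) to produce a solution that actually exists in the model; as written, there is a genuine gap.
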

\begin{proof}[Proof of Belanger]
Let $T \subseteq \NN^{<\NN}$ be the computable tree such that
\begin{itemize}
    \item[(1)] every $x \in \NN$ appears in some node~$\sigma \in T$;
    \item[(2)] every $\sigma \in T$ is $f$-homogeneous for color~0;
    \item[(3)] for every $\sigma \in T$ and every~$x, y \in \NN$ such that $\sigma \cdot x, \sigma \cdot y \in T$, $f(x, y) = 1$.
\end{itemize}
By (2) and by assumption, $T$ has finite depth, so by (1) and $\BSig_2$, there is some level~$\ell_0 \in \NN$ such that $\{ \sigma \in T : |\sigma| = \ell_0 \}$ is infinite. By $\ISig_2$, there is a least such level $\ell \leq \ell_0$. Since $T$ has only one node at level~0, namely $\langle \rangle$, $\ell > 0$. By minimality of~$\ell$, $\{ \sigma \in T : |\sigma| = \ell-1 \}$ is finite, so by $\BSig_2$, there is some~$\sigma \in T$ of length~$\ell-1$ with infinitely many immediate children. The set $\{ x \in \NN : \sigma \cdot x \in T \}$ is therefore infinite, and by (3), it is $f$-homogeneous for color~1.
\end{proof}

Note that if the bound to the 1-homogeneous sets is a standard integer, then so is the depth of the tree in \Cref{prop:isig2-urt22}, and the statement is provable over~$\RCA_0 + \BSig_2$ (see Soldà~\cite[Corollary 4.1.5]{solda2021calibrating}).
\smallskip

Bounded Ramsey's theorem for pairs and two colors can be formulated in terms of graphs, stating that every infinite graph with no sub-clique of some size~$k$ admits an infinite induced sub-anticlique. This statement has applications to partial orders, through their comparability and incomparability graphs.

\subsection{Dilworth's theorem}

Dilworth's theorem is a famous combinatorial theorem about partial orders, closely related to bounded Ramsey's theorem. The \emph{width} $w(\P)$ of a partial order $\P = (\NN, <_\P)$ is the size of its largest antichain. 

\begin{statement}[Dilworth's theorem]
\qt{For every partial order $\P = (\NN, <_\P)$ of finite width, there is a coloring $g : \NN \to w(\P)$ such that for every~$i < w(\P)$, $g^{-1}(i)$ is a $\P$-chain.}
\end{statement}

Such a coloring $g$ is called a \emph{chain partition} of~$\P$. Dilworth's theorem is classically proven for finite partial orders, but the infinitary version directly follows from it by a compactness argument as follows: Given a  partial order $\P = (\NN, <_\P)$ of finite width~$w$, one can define its incomparability graph $\G = (\NN, E)$, that is, the graph such that $\{x, y\} \in E$ iff $x$ and $y$ are $\P$-incomparable. Finite Dilworth's theorem essentially states that the graph $\G$ is locally $w$-colorable. By Gasarch and Hirst~\cite[Theorem 4]{gasarch1998reverse}, $\RCA_0 + \WKL$ proves that every locally $w$-colorable graph is $w$-colorable, and any such $w$-coloring is a chain partition of~$G$.
Thus, Dilworth's theorem follows from~$\RCA_0 + \WKL$.
Hirst~\cite[Theorem 3.23]{hirst1987combinatorics} studied Dilworth's theorem in reverse mathematics and proved that it is equivalent to~$\WKL$ over~$\RCA_0$, even restricted to partial orders of width~2.

\begin{proposition}[{Hirst~\cite[Theorem 3.23]{hirst1987combinatorics}}]
Over $\RCA_0$, Dilworth's theorem is equivalent to~$\WKL$.
\end{proposition}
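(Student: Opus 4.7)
The plan is to prove the equivalence in two directions, with the forward implication following from results already cited and the reverse implication (Dilworth for width~2 implies $\WKL$) being the main content.

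For the forward direction, I would argue exactly as sketched in the paragraph preceding the statement. Given a partial order $\P = (\NN, <_\P)$ of finite width $w = w(\P)$, form its incomparability graph $\G = (\NN, E)$. A clique in $\G$ is an antichain in $\P$, so $\G$ contains no $(w+1)$-clique. Finite Dilworth's theorem, which is a finitary combinatorial statement provable in $\RCA_0$ by induction on the size of the partial order, asserts that every finite sub-partial-order of width at most $w$ admits a chain partition into at most $w$ colors. This means $\G$ is locally $w$-colorable. By Gasarch and Hirst~\cite[Theorem~4]{gasarch1998reverse}, $\RCA_0 + \WKL$ proves that every locally $w$-colorable graph is $w$-colorable, and any $w$-coloring of $\G$ is exactly a chain partition $g : \NN \to w$ of~$\P$.

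For the reverse direction, I would code an instance of weak K\"onig's lemma into a width-$2$ partial order. Start from an infinite subtree $T \subseteq 2^{<\NN}$ and construct, computably from~$T$, a partial order $\P = (\NN, <_\P)$ of width $2$ whose chain partitions into two chains uniformly compute an infinite path through $T$. The high-level idea is the following: each element of $\P$ represents a node of $T$ (say under a fixed bijection), and the order relation is designed so that the incomparability graph is a computable bipartite graph whose $2$-colorings correspond to infinite branches of $T$. Concretely, elements coding nodes on the same potential path form a chain, while elements coding nodes at the same level but on incompatible extensions form the incomparable pairs that create the bipartite structure. Given any chain partition $g : \NN \to 2$ supplied by Dilworth's theorem applied to $\P$, one then reads off, level by level, which side of $T$ the encoded branch goes through, producing an infinite path.

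The main obstacle is engineering the order so that the width is \emph{exactly} $2$, meaning no three elements are pairwise incomparable, while still forcing the two-chain decomposition to carry enough information to reconstruct a path. Triples of pairwise incomparable elements would arise from triangles in the incomparability graph, so the construction must arrange that for every three nodes of $T$ coded into $\P$, at least two of their representatives are $<_\P$-comparable. Transitivity of~$<_\P$ must also be verified by hand, since asymmetric pairwise relations do not automatically assemble into a partial order. Once the construction is in place, verifying that $\RCA_0$ suffices to define $\P$ and to extract the path from the coloring is routine: it amounts to showing that the map from the chain partition to the path is $\Delta^0_1$ uniformly in the inputs, which then lets us conclude $\RCA_0 + \text{Dilworth for width } 2 \vdash \WKL$.
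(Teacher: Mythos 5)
Your forward direction is fine and is exactly the argument the paper sketches in the paragraph before the statement (incomparability graph, local $w$-colorability of finite suborders, Gasarch--Hirst plus $\WKL$); note the paper itself only cites Hirst for the proposition and gives no proof, so the reversal is where all the content lies, and that is where your proposal has a genuine gap. What you offer for the reversal is a plan, not a construction: the entire difficulty of the theorem is to produce, computably from the $\WKL$-instance, a partial order of width exactly~$2$ whose chain partitions compute the desired object, and you explicitly defer this (``the main obstacle is engineering the order so that the width is exactly $2$''). Worse, the high-level idea you do describe is inconsistent with that requirement: if every node of $T$ is represented by an element of $\P$ and representatives of same-level incompatible nodes are $<_\P$-incomparable, then level $k$ of $T$ may contain up to $2^k$ pairwise incompatible nodes, giving antichains of size far greater than $2$. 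So the coding cannot be ``one element per node of $T$ with incomparability mirroring incompatibility''; some different mechanism is needed, and none is supplied.

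The standard route (and essentially Hirst's) is not to code paths of an arbitrary tree directly but to use the equivalence, over $\RCA_0$, of $\WKL$ with $\Sigma^0_1$-separation: given injective computable functions enumerating two disjoint sets $A$ and $B$, one builds a computable partial order of width~$2$ --- typically by introducing, for each $n$, a small gadget of elements linked to a fixed reference chain, with comparabilities added at the stage (if any) at which $n$ enters $A$ or $B$ --- so that from any partition into two chains one reads off a set separating $A$ from $B$; $\RCA_0$ then yields $\WKL$. Your proposal would be repaired by carrying out such a gadget construction (and verifying width~$2$ and transitivity for it), but as written the reversal is missing its key step and the sketch that is given would not produce a width-$2$ order.
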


Seeing a partial graph of finite width as a degenerate instance of~$\BRT^2_2$, we are interested in the existence of an infinite $\P$-chain. Given a chain partition $g : \NN \to k$, a final application of $\RT^1$ suffices to obtain such a chain. Therefore, $\RCA_0 + \WKL + \BSig_2$ is sufficient to prove that every partial order of finite width admits an infinite chain. Dilworth's theorem states that the chain partition of $\P$ equals its width, but for our purpose, it is sufficient to consider the following weaker statement.

\begin{statement}[Weak Dilworth's theorem]
\qt{For every partial order $\P = (\NN, <_\P)$ of finite width, there is some~$k \in \NN$ and a coloring $g : \NN \to k$ such that for every~$i < k$, $g^{-1}(i)$ is a $\P$-chain.}
\end{statement}

By Schmerl~\cite{schmerl2000graph}, if $2 \leq k \leq m$, then the statement \qt{Every locally $k$-colorable graph is $m$-colorable} is equivalent to $\WKL$ over~$\RCA_0$. Based on the proof of Dilworth's theorem, one would naturally expect that weak Dilworth's theorem remains equivalent to $\WKL$ over~$\RCA_0$. Surprisingly, Kierstead~\cite{kierstead1981effective} proved that every computable graph with width~$w$ admits a computable chain partition of size $(5^w-1)/4$.

\begin{theorem}[Kierstead~\cite{kierstead1981effective}]
Weak Dilworth's theorem is computably true.
\end{theorem}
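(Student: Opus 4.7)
The plan is to exhibit a computable online algorithm $A_w$ which, given any computable enumeration of $\P = (\NN, <_\P)$ of width $w$, assigns each element on the fly to one of at most $f(w) := (5^w - 1)/4$ chains. I proceed by induction on $w$. Since $f$ satisfies $f(1) = 1$ and $f(w) = 5 f(w-1) + 1$, the recurrence itself dictates the shape of the construction: the chains maintained by $A_w$ should be organized into five groups, each managed by a copy of $A_{w-1}$ acting on an induced sub-poset of width at most $w-1$, plus one distinguished extra chain.

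For the base case $w = 1$, the partial order is itself a chain, so a single chain partition works. For the inductive step, I maintain at each stage a finite pool of active chains, each flagged with one of five roles. When a new element $x$ arrives, $A_w$ computes the comparisons between $x$ and the top of each active chain (a finite computable test, since $<_\P$ is computable). Depending on the outcome, either $x$ extends some active chain compatibly with its role, or $x$ is delegated to one of the five sub-algorithms running $A_{w-1}$ on an appropriate induced sub-poset, which by the inductive hypothesis uses no more than $f(w-1)$ chains. Summing up yields a chain partition of size at most $5 f(w-1) + 1 = f(w)$.

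The main obstacle is the combinatorial design of the five roles and the verification that the algorithm never exceeds its chain budget. This reduces to the following counting argument: if at some stage too many chains were opened, one could trace the assignment history back and pick one representative per offending chain so that the resulting elements are pairwise incomparable, yielding an antichain of size $w+1$ in $\P$ and contradicting the width hypothesis. This antichain extraction is the combinatorially delicate heart of Kierstead's original proof in~\cite{kierstead1981effective}, which I would follow. Since $w$ is a standard integer and each step of $A_w$ performs only finitely many computable tests on $<_\P$, the resulting chain partition is computable uniformly in a code for $\P$, and the whole construction formalizes in $\RCA_0$.
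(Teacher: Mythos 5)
The paper does not actually prove this theorem: it quotes it as a black box from Kierstead~\cite{kierstead1981effective}, and your proposal, on inspection, does the same, so the question is whether what you wrote stands on its own as a proof --- and it does not. The observation that $f(w)=(5^w-1)/4$ satisfies $f(1)=1$ and $f(w)=5f(w-1)+1$ is correct, but a recurrence cannot ``dictate the shape of the construction'': the entire mathematical content of the theorem lies in (i) specifying the five roles, (ii) proving that the set of elements delegated to each copy of $A_{w-1}$ really forms, online and without any knowledge of future elements, an induced sub-poset of width at most $w-1$, and (iii) carrying out the budget verification by extracting an antichain of size $w+1$ from any hypothetical overflow. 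You explicitly defer all three points to Kierstead's paper (``which I would follow''), so the proposal is a plan plus a citation rather than a proof. Moreover, the one concrete algorithmic claim you do make --- that the delegation decision can be made by comparing the new element only with the tops of the currently active chains --- is not justified, and it is precisely the kind of step that fails unless the combinatorial bookkeeping is set up with care; nothing in your sketch rules out that such a top-comparison rule forces more than $f(w)$ chains or sends elements of a width-$w$ antichain into the same sub-algorithm.

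Two smaller points. First, for the statement as given (``computably true'') the optimal bound $(5^w-1)/4$ is irrelevant: any computable chain partition into finitely many chains suffices, so a genuinely simpler argument with a worse bound would have been a legitimate alternative route, but you do not supply one either. Second, your closing claim that ``the whole construction formalizes in $\RCA_0$'' overstates what you have shown and runs against the paper's own remark that Kierstead's original argument uses strong induction axioms, its formalization over $\RCA_0$ being a separate nontrivial adaptation due to Fiori-Carones et al.~\cite{fiori2024extraordinary}; fortunately that claim is not needed for the present computability-theoretic statement, where $w$ is a standard integer and one works in the standard model.
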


The original proof by Kierstead~\cite{kierstead1981effective} involved strong induction axioms, but in their study of a theorem from Rival and Sands~\cite{rival1980adjacency}, Fiori-Carones, Marcone, Shafer, Soldà, with the help of Yokoyama, adapted the proof of Kierstead to formalize it over~$\RCA_0$ (see~\cite[Theorem 3.5]{fiori2024extraordinary}).

\begin{proposition}[{Fiori-Carones et al.~\cite{fiori2024extraordinary}}]
$\RCA_0$ proves weak Dilworth's theorem.
\end{proposition}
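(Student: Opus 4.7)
The plan is to formalize Kierstead's online chain-partition algorithm inside $\RCA_0$. Given a partial order $\P = (\NN, <_\P)$ of width at most $w$, the algorithm would process the elements $0, 1, 2, \dots$ in order and assign each a color from a bounded palette so that every color class is a $\P$-chain. The procedure is naturally recursive in $w$: for $w=1$, the order is itself a chain and a single color suffices; for $w>1$, each new element $n$ is first placed into one of boundedly many \emph{layers} according to its comparability pattern with the currently active chains, and then colored within its layer using the algorithm for width $w-1$. Unfolding this recursion yields an explicit primitive recursive rule assigning to each $n$ a color in $\{0,\dots,k(w)-1\}$ with Kierstead's bound $k(w) = (5^w-1)/4$.

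Since the coloring rule is primitive recursive in $\P$ and $w$, the coloring $g$ and the bound $k(w)$ exist in $\RCA_0$ by $\Sigma^0_1$-recursion. The main task is to verify, within $\RCA_0$, that (i) each color class is a $\P$-chain and (ii) at most $k(w)$ colors appear. Both properties can be phrased as invariants of the algorithm: after processing the first $n$ elements, the partial coloring is a chain partition of $\P \uh \{0,\dots,n-1\}$ using at most $k(w)$ colors. Maintained stage by stage, the preservation from $n$ to $n+1$ is a bounded-quantifier statement in $\P \uh \{0,\dots,n\}$ and the layer data already computed, so $\Sigma^0_1$-induction on the stage $n$ is enough to conclude.

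The main obstacle is keeping the argument within $\Sigma^0_1$-induction, since Kierstead's original recursive proof naturally proceeds by induction on $w$ and $n$ simultaneously, which would demand $\Sigma^0_2$-induction. Following Fiori-Carones, Marcone, Shafer, Soldà, and Yokoyama, the fix is to avoid genuine recursion by explicitly attaching to each element a length-$w$ tuple of sub-layer indices computed directly from $\P \uh \{0,\dots,n-1\}$, so that the color of $n$ and the correctness conditions become $\Sigma^0_0$ predicates of this tuple. With this reformulation the outer recursion on $w$ disappears into the (fixed) length of the tuple, the layering analysis becomes a single combinatorial lemma verified externally for each $w$, and the internal verification collapses to a $\Sigma^0_1$ induction on $n$, which is available in $\RCA_0$.
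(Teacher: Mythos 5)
The paper itself does not contain a proof of this proposition: it is quoted from Fiori-Carones, Marcone, Shafer, Sold\`a and Yokoyama \cite[Theorem 3.5]{fiori2024extraordinary}, who adapted Kierstead's on-line chain-partition algorithm so that its verification goes through in $\RCA_0$. Your plan --- run Kierstead's on-line algorithm, note that the resulting coloring is $\Delta^0_1$ in $\P$ and $w$, and verify a stage-by-stage invariant by $\Sigma^0_1$-induction --- is the same strategy as that source, so at the level of approach there is nothing to object to.

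As a proof, however, there is a genuine gap. First, essentially all of the mathematical content of the theorem lies in the algorithm itself and in the argument that it never uses more than $(5^w-1)/4$ chains while keeping every color class a $\P$-chain; your sketch never states the algorithm or the invariant precisely, it only asserts that a \qt{layering} recursion on $w$ can be unfolded into length-$w$ tuples. Second, the proposed move of having the layering analysis \qt{verified externally for each $w$} is not available here: weak Dilworth's theorem is a statement of $\RCA_0$ about an arbitrary width $w \in \NN$ of the model, and $w$ may be nonstandard, so the correctness of the layer bookkeeping must be proved inside $\RCA_0$, uniformly in $w$, not discharged in the metatheory (this is the same reason the paper is careful elsewhere to distinguish standard from nonstandard bounds, e.g.\ after \Cref{prop:isig2-urt22}). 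What is true is that once the invariant is expressed as a bounded-quantifier property of the coded finite configuration after stage $n$, with $w$ a first-order parameter, $\Sigma^0_1$-induction on $n$ suffices; but the preservation step --- the finite combinatorial heart of Kierstead's argument --- must then be proved internally and uniformly in $w$, and that is exactly the work carried out in the cited paper and absent from your proposal.
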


Kierstead's theorem can be stated in the modern formalism as the existence of an on-line algorithm for the chain partition problem, where an \emph{on-line algorithm} is one that produces its output progressively, as it receives its input, without having access to the entire input in advance. The bound $(5^w-1)/4$ obtained by Kierstead was recently improved by Bosek and Krawczyk~\cite{bosek2010subexponential,bosek2015subexponential} to $w^{14\log w}$ and by Bosek et al~\cite{bosek2018subexponential} to $w^{6.5\log w+7}$. See~\cite{bosek2012online} for a survey on the subject.
\smallskip

The proof of Kierstead's theorem is complex, but in the case where the partial order is up-growing, there exist significantly simpler algorithms (see Felsner~\cite{felsner1997online}). A presentation of a partial order is \emph{up-growing} if each new element is maximal among the elements inputted so far. The following proposition is essentially a reformulation of Felsner's algorithm~\cite{felsner1997online} in our formalism, in which an up-growing partial order can be seen as a coloring $f : [\NN]^2 \to 2$ which is transitive for color~0. Indeed, any such coloring can be seen as the partial order $\P = (\NN, <_\P)$ defined by $x <_\P y$ iff $x <_\NN y$ and $f(x, y) = 0$. 

\begin{proposition}[$\RCA_0$]\label[proposition]{prop:semi-transitive-dilworth-rca0}
Fix some~$k \in \NN$. For every coloring $f : [\NN]^2 \to 2$ which is transitive for color~0, and such that there is no $f$-homogeneous set for color~1 of some size~$k$, there is a coloring $g : \NN \to {k+1 \choose 2}$ such that for every~$i < {k+1 \choose 2}$, $g^{-1}(i)$ is $f$-homogeneous for color~0.
\end{proposition}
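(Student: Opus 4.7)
The plan is to reformulate the hypothesis as a statement about an up-growing partial order and then emulate Felsner's on-line chain partition algorithm inside $\RCA_0$. Setting $x <_P y$ iff $x <_\NN y$ and $f(x,y) = 0$, transitivity of $f$ for color~0 makes $(\NN, <_P)$ a partial order whose $<_P$-antichains are exactly the $f$-homogeneous sets for color~1, so its width is at most $k-1$. Moreover, the $\NN$-order is a linear extension of $<_P$ in which each new element is $<_P$-maximal among the previously seen elements, which is precisely Felsner's up-growing setting.

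Following Felsner, I would associate to each $x$ two numerical invariants computed from the finite poset on $\{0,\dots,x\}$. The first is an antichain statistic $\alpha(x)$, defined as the maximum size of a $<_P$-antichain in $\{0,\dots,x\}$ containing $x$; by the width bound $\alpha(x) \leq k-1$. The second $\beta(x)$ is a bookkeeping index defined greedily as the least $i$ in some initial segment of $\NN$ such that no earlier $y <_\NN x$ with $(\alpha(y),\beta(y)) = (\alpha(x), i)$ is $<_P$-incomparable with $x$. The output is then $g(x) = (\alpha(x), \beta(x))$. With the correct indexing convention for $\beta$ (for instance $\beta(x) \in \{0,\dots,\alpha(x)\}$), a count of admissible pairs gives $\sum_{\alpha = 0}^{k-1} (\alpha+1) = \binom{k+1}{2}$, matching the stated bound after reindexing.

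By construction of $\beta$, each fibre $g^{-1}(i)$ is a $<_P$-chain: whenever $x <_\NN y$ carry the same label, the very definition of $\beta(y)$ forbids $y$ from being $<_P$-incomparable with $x$, so $x <_P y$ and the fibre is $f$-homogeneous for color~0. The crux of the proof, and the expected main obstacle, is showing that the greedy step always succeeds, i.e.\ that $\beta(x)$ really lies in the advertised range. If the choice failed, one would have, for each admissible $i$, a witness $z_i <_\NN x$ that is $<_P$-incomparable with $x$ and satisfies $\alpha(z_i)=\alpha(x)$, $\beta(z_i)=i$. Picking for each $z_i$ an antichain of size $\alpha(x)$ inside $\{0,\dots,z_i\}$ that contains $z_i$ (which exists by definition of $\alpha(z_i)$), and combining these antichains using the distinctness of the labels together with the inductive hypothesis that equal-labelled elements are $<_P$-comparable, one extracts a $<_P$-antichain in $\{0,\dots,x\}$ containing $x$ of size exceeding $\alpha(x)$, contradicting the maximality in the definition of $\alpha(x)$.

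Once the combinatorial core is settled, the formalization in $\RCA_0$ is routine. The poset $<_P$ is $\Delta^0_1$ in $f$; for each $x$, the quantities $\alpha(x)$ and $\beta(x)$ are computed by searching finite subsets of $\{0,\dots,x\}$, so both are primitive recursive in $f$ and $g$ is a total $\Delta^0_1$ function. The recursion defining $\beta$ proceeds along $<_\NN$ with values depending only on earlier stages, and the verification of the invariants uses only $\Sigma^0_0$-induction on finite initial segments of the poset, all of which is available in $\RCA_0$.
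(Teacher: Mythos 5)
Your reduction to an up-growing partial order is the same starting point as the paper, but the algorithm you then run is not Felsner's, and its crux lemma is false. Concretely, the claim that the greedy index satisfies $\beta(x)\leq\alpha(x)$ fails already for width~$2$ (i.e.\ $k=3$). Consider elements $1,\dots,10$ presented in numerical order, with $x<_P y$ (for $x<_\NN y$) given by: $2\parallel 1$; $3>1,2$; $4>1$, $4\parallel 2,3$; $5>1,2,4$, $5\parallel 3$; $6>1,\dots,5$; $7>1,2,3$, $7\parallel 4,5,6$; $8>1,2,3,4,7$, $8\parallel 5,6$; $9>1,\dots,5,7,8$, $9\parallel 6$; $10>1,\dots,6$, $10\parallel 7,8,9$. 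This is a partial order, each element is maximal on arrival, and one checks there is no $3$-element antichain, so the coloring $f$ it induces is transitive for color~$0$ with no $1$-homogeneous set of size~$3$. Your statistic gives $\alpha=1$ on $\{1,3,6\}$ and $\alpha=2$ on the rest, and within the class $\alpha=2$ your greedy rule yields $\beta(2)=0$, $\beta(4)=1$, $\beta(5)=0$, $\beta(7)=2$, $\beta(8)=1$, $\beta(9)=0$ and then $\beta(10)=3$, since $10$ is incomparable with $9$, $8$ and $7$, which carry the labels $(2,0)$, $(2,1)$, $(2,2)$. So $\beta(10)>\alpha(10)$, contradicting the advertised range.

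The proof you sketch for the crux step cannot be repaired as stated, and the example shows why: the witnesses $z_0,\dots,z_{\alpha(x)}$ blocking the candidate indices are each incomparable with $x$, but nothing forces them to be pairwise incomparable --- distinct $\beta$-labels say only that each $z_i$ was incomparable with \emph{some earlier} element of the other fibres, and elements of different fibres may well be comparable. In the example the blockers are $7<_P 8<_P 9$, a chain, and indeed no antichain of size $3$ through $10$ exists ($\alpha(10)=2$ is correct); so no contradiction with the maximality of $\alpha(x)$ can be extracted, and ``first-fit within $\alpha$-classes'' has no evident bound at all. The paper's proof avoids this by maintaining a genuinely different invariant: it keeps buckets $F^1,\dots,F^{k-1}$, where bucket $F^i$ contains at most $i$ chains whose \emph{maxima form a $1$-homogeneous set}; a new element either extends a chain (swapping the contents of buckets $i-1$ and $i$) or opens a fresh chain in a non-full bucket, and the hypothesis that there is no $1$-homogeneous set of size $k$ is used exactly to show that a suitable bucket always exists, with the colour count bounded by $\sum_{i\leq k} i={k+1\choose 2}$. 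It is this antichain-of-tops invariant, absent from your value-based classification, that makes the placement step always succeed; to fix your proof you would need to replace the $(\alpha,\beta)$ labelling by such an invariant (or prove a new bound for your algorithm by a different argument).
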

\begin{proof}[Felsner's algorithm~\cite{felsner1997online}]
Fix~$k$ and $f$.
We first construct a uniformly $f$-computable sequence $(F^1_s, \dots, F^{k-1}_s)_{s \in \NN}$ such that for every~$s \in \NN$,
\begin{itemize}
    \item[(1)] Each $F^i_s$ is a set of size at most~$i$, containing finite $f$-homogeneous sets for color~0;
    \item[(2)] $F^1_s \cup \dots \cup F^{k-1}_s$ forms a partition of $[0, s)$, that is, for every~$x < s$, there is exactly one~$i$ and one $H \in F^i_s$ such that $x \in H$;
    \item[(3)] For each~$F^i_s$, the set $\{ \max H : H \in F^i_s \}$ is $f$-homogeneous for color~1.
\end{itemize}
At stage~0, $F^1_0 = \dots = F^{k-1}_0 = \emptyset$.
At stage~$s+1$, assume $F^1_s, \dots F^{k-1}_s$ are defined.
Let $i \in \{1,\dots, k\}$ be the least index such that either $\card F^i_s < i$, or there is some~$H \in F^i_s$ such that $f(\max H, s) = 0$. Such an index exists as otherwise, by (3), $\{\max H : H \in F^{k-1}_s\} \cup \{s\}$ would form an $f$-homogeneous set for color~1 of size~$k$, contradicting our hypothesis. We have two cases:

Case 1: there is some~$H \in F^i_s$ such that $f(\max H, s) = 0$. Then let $\hat H = H \cup \{s\}$. If $i = 1$, then, letting $F^i_{s+1} = (F^i_s \setminus \{H\}) \cup \{\hat H\}$, and $F^j_{s+1} = F^j_s$ for $j \neq i$, the new tuple satisfies properties (1-3).
If $i > 1$, then, letting $F^{i-1}_{s+1} = F^i_s \setminus \{H\}$, $F^i_{s+1} = F^{i-1}_s \cup \{\hat H\}$, and $F^j_{s+1} = F^j_s$ for $j \not \in \{i-1,i\}$, the new tuple satisfies properties (1-3).

Case 2: there is no such~$H$. Then $\card F^i_s < i$ and, letting $F^i_{s+1} = F^i_s \cup \{ \{ s\}\}$ and $F^j_{s+1} = F^j_s$ for $j \neq i$, the new tuple satisfies properties (1-3). This completes the construction of the sequence.

Let $g : \NN \to {k+1 \choose 2}$ be defined inductively as follows: $g(0) = 0$. Given~$x$, let $g(x) = g(y)$ for any $y < x$ such that for some~$i \in \{1, \dots, k\}$ and some~$H \in F^j_{x+1}$, $x, y \in H$, if it exists. Otherwise, let $g(x)$ be the least fresh color. By construction and by (1), there are at most $\sum_{i \in \{1,\dots, k\}} i  = \frac{(1+k)k}{2} = {k+1 \choose 2}$ colors. Moreover, if $g(x) = g(y)$, then there is some $f$-homogeneous set~$H$ for color~0 such that $x, y \in H$, so $f(x, y) = 0$. It follows that for every $i < {k+1 \choose 2}$ $g^{-1}(i)$ is $f$-homogeneous for color~0.
\end{proof}

This simpler algorithm is sufficient to obtain another proof that $\EM$ implies $\BRT^2_2$ over~$\RCA_0$.

\begin{corollary}[{Soldà~\cite[Theorem 4.1.10]{solda2021calibrating}}]
$\RCA_0 \vdash \EM \to \BRT^2_2$.
\end{corollary}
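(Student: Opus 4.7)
The plan is to chain the Erd\H{o}s-Moser theorem with \Cref{prop:semi-transitive-dilworth-rca0} and the infinite pigeonhole principle $\RT^1$ (equivalently $\BSig_2$). Given an instance $(f,\ell)$ of $\BRT^2_2$, where $f:[\NN]^2\to 2$ has no 1-homogeneous set of size $\ell$, I would first apply $\EM$ to $f$ to obtain an infinite set $X\subseteq\NN$ such that $f\uh[X]^2$ is transitive for both colors, and in particular for color~0. Any 1-homogeneous subset of $X$ is still 1-homogeneous for $f$ in $\NN$, so $f\uh[X]^2$ inherits the bound~$\ell$ on 1-homogeneous sets. Reindexing $X$ through its principal function (available in $\RCA_0$) turns $f\uh[X]^2$ into a coloring $\tilde f:[\NN]^2\to 2$ which is transitive for color~0 and has no 1-homogeneous set for color~1 of size~$\ell$.

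Next, I would invoke \Cref{prop:semi-transitive-dilworth-rca0} on $\tilde f$ with $k=\ell$ to extract a coloring $g:\NN\to\binom{\ell+1}{2}$ each of whose fibers $g^{-1}(i)$ is $\tilde f$-homogeneous for color~0. A final application of $\RT^1$ to $g$ produces an infinite $g$-homogeneous fiber $H$, which is $\tilde f$-homogeneous for color~0, and pulling back through the principal function of $X$ gives an infinite $f$-homogeneous set for color~0, as required.

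The one subtlety, and really the only place where something nontrivial is needed, is this last pigeonhole step: the color range $\binom{\ell+1}{2}$ depends on the parameter~$\ell$, which need not be a standard integer in the ambient model, so one cannot appeal to $\ISig_0$-induction for bounded pigeonhole. This is handled by observing that $\EM$ implies $\BSig_2$ over $\RCA_0$ (a consequence recorded in the introduction, via the references on common consequences of $\EM$ and $\ADS$), and $\BSig_2$ is equivalent to $\RT^1$. Thus both ingredients—the combinatorial extraction and the induction needed for the final pigeonhole—are supplied by $\EM$ over $\RCA_0$.
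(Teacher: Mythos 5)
Your proposal is correct and follows essentially the same route as the paper: apply $\EM$ to obtain a sub-coloring transitive for both colors, reindex, invoke \Cref{prop:semi-transitive-dilworth-rca0}, and finish with $\RT^1$, which the paper likewise justifies by the fact that $\EM$ implies it (citing Kreuzer). Your explicit remark about why $\RT^1$ with a possibly nonstandard number of colors needs $\BSig_2$ is a point the paper leaves implicit, but it is the same argument.
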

\begin{proof}
Let $f : [\NN]^2 \to 2$ be a coloring with no $f$-homogeneous set for color~1 of size~$k$ for some~$k \in \NN$.
By $\EM$, there is an infinite subset~$X = \{ x_0 < x_1 < \dots \}$ which is $f$-transitive for both colors. 
Let $h : [\NN]^2 \to 2$ be defined by $h(a, b) = f(x_a, x_b)$. In particular, $h$ is transitive for both colors and has no $h$-homogeneous set for color~1 of size~$k$. By \Cref{prop:semi-transitive-dilworth-rca0}, there is some~$\ell \in \NN$ and a coloring $g : \NN \to \ell$ such that for every~$i < \ell$, $g^{-1}(i)$ is $h$-homogeneous for color~0. By $\RT^1$ which follows from $\EM$ (see Kreuzer~\cite{kreuzer2012primitive}), there is some~$i < \ell$ such that $g^{-1}(i)$ is infinite. The set $\{ x_a : g(a) = i \}$ is an infinite $f$-homogeneous set for color~0.
\end{proof}

\subsection{Mirsky's theorem}

Dilworth's theorem admits a dual version for partial orders with chains of bounded size, due to Mirsky~\cite{mirsky1971dual}.
The \emph{height} $h(\P)$ of a partial order $\P = (\NN, <_\P)$ is the size of its largest $\P$-chain.

\begin{statement}[Mirsky's theorem]
\qt{For every partial order $\P = (\NN, <_\P)$ of finite height, there is a coloring $g : \NN \to h(\P)$ such that for every~$i < h(\P)$, $g^{-1}(i)$ is a $\P$-antichain.}
\end{statement}

Accordingly, such a coloring $g$ is called an \emph{antichain partition} of~$\P$.  Contrary to Dilworth's theorem, Mirsky's theorem admits a trivial combinatorial proof: Given a partial order $\P = (\NN, <_\P)$ of finite height~$h$, define $g : \NN \to h$ as follows: $g(x)$ is the size of the largest $\P$-chain with~$x$ as $\P$-maximal element. However, from a reverse mathematical viewpoint, the definition of~$g$ requires an unbounded search for such a $\P$-chain, and is formalized over~$\ACA_0$. Hirst~\cite{hirst1987combinatorics} actually showed that Mirsky's theorem is equivalent to $\WKL$ (and therefore to Dilworth's theorem) over~$\RCA_0$, even when restricted to partial orders of height~2.

\begin{proposition}[{Hirst~\cite[Theorem 3.24]{hirst1987combinatorics}}]
Over $\RCA_0$, Mirsky's theorem is equivalent to~$\WKL$.
\end{proposition}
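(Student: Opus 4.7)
The plan is to prove both directions of the equivalence separately. The direction $\WKL \to$ Mirsky's theorem mirrors the proof of the analogous result for Dilworth's theorem recalled earlier in this section: for a partial order $\P = (\NN, <_\P)$ of finite height $h$, construct its comparability graph $\G$ (with $\{x,y\}$ an edge iff $x <_\P y$ or $y <_\P x$). On any finite $F \subseteq \NN$, the map sending $x \in F$ to the size of the largest $\P$-chain in $F$ with $\P$-maximum~$x$ is a $\Delta^0_1$ antichain partition of $\P|_F$ using at most $h$ colors, witnessing that $\G$ is locally $h$-colorable. Invoking Gasarch and Hirst~\cite[Theorem 4]{gasarch1998reverse}, $\RCA_0 + \WKL$ proves that every locally $h$-colorable graph is globally $h$-colorable, yielding the desired antichain partition.

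For the reverse direction, Mirsky's theorem restricted to height~2 implies $\WKL$ over~$\RCA_0$, I would use the equivalence between $\WKL$ and the $\Sigma^0_1$-separation principle. Given disjoint $\Sigma^0_1$ sets $A, B$ with enumerations $(A_s)_s, (B_s)_s$, one constructs computably a partial order $\P$ of height at most~2 whose 2-antichain partitions each encode a set separating $A$ from $B$. The design associates to each $n$ a small cluster of elements together with fixed auxiliary witnesses; as $n$ is enumerated into~$A_s$, comparabilities are added to force $n$'s cluster into a distinguished color class of any valid 2-antichain partition, while a $B$-enumeration symmetrically forces $n$'s cluster into the other class. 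Applying Mirsky's theorem to $\P$ then yields a 2-coloring $g$ from which a separating set for $A$ and $B$ can be read off $\Delta^0_1$ in $g$.

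The main obstacle lies in the design of $\P$ in the reverse direction: comparabilities must be added dynamically to reflect the enumerations of $A$ and $B$, yet the resulting relation must never produce a chain of length three, regardless of the interleaving or order of the enumerations. Arranging that a single $A$-event or $B$-event suffices to force the intended color class without ever introducing a 3-chain—and that the forcing is robust across both possible global color swaps of the 2-partition—is the core combinatorial challenge, and the only step that goes meaningfully beyond routine appeals to existing results.
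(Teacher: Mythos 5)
Your forward direction ($\WKL \to$ Mirsky) is correct and is exactly the route the paper indicates (the comparability graph is locally $h$-colorable, witnessed by the \qt{longest chain inside $F$ ending at $x$} coloring, and Gasarch--Hirst gives global $h$-colorability over $\RCA_0 + \WKL$); note that the paper itself does not prove this proposition but cites Hirst's Theorem 3.24, only sketching this first half in the surrounding text.

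The reversal, however, is where your proposal has a genuine gap: you describe what the height-$2$ partial order should accomplish, but you explicitly leave the construction --- which is the entire content of the reversal --- as an unresolved \qt{combinatorial challenge}. Two points make this more than a routine omission. First, your phrasing \qt{as $n$ is enumerated into $A_s$, comparabilities are added} conflicts with computability of $\P$: the relation between two already-introduced elements must be decided once and for all, so the enumeration of $A$ and $B$ can only be coded through \emph{fresh} elements whose relations to older elements are fixed at their creation. Second, the swap-invariance worry you raise is resolved not by the clusters themselves but by comparing against a fixed anchor. Concretely, one can put a single anchor chain $r <_\P r'$ and, for each $n$, a vertex $x_n$; if $n$ enters $A$, introduce a fresh $z$ with $z <_\P x_n$ and $z <_\P r'$ (with two colors this forces $g(x_n) = g(r') \neq g(r)$), and if $n$ enters $B$, introduce a fresh $z$ with $x_n <_\P z$ and $r <_\P z$ (forcing $g(x_n) = g(r)$). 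Since $A$ and $B$ are disjoint, each $x_n$ receives at most one treatment, every chain has length at most $2$, and the order is computable because all relations involving only older elements are never revised. Then $S = \{ n : g(x_n) \neq g(r) \}$ is $\Delta^0_1$ in $g$, contains $A$, is disjoint from $B$, and is invariant under the global color swap, so Mirsky's theorem for height-$2$ orders yields $\Sigma^0_1$-separation and hence $\WKL$. Without some such explicit construction, your proposal establishes only one direction of the equivalence.
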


The proof of Mirsky's theorem over~$\RCA_0 + \WKL$ is very similar to that if Dilworth's theorem, this time using the comparability graph. As for Dilworth's theorem, one can define a weak version of the statement.

\begin{statement}[Weak Mirsky's theorem]
\qt{For every partial order $\P = (\NN, <_\P)$ of finite height, there is some~$k \in \NN$ and a coloring $g : \NN \to k$ such that for every~$i < k$, $g^{-1}(i)$ is a $\P$-antichain.}
\end{statement}

The natural combinatorial proof of Mirsky's theorem can be refined, to yield a proof of weak Mirsky's theorem over~$\RCA_0$.

\begin{proposition}\label[proposition]{prop:weak-mirsky-rca0}
Weak Mirsky's theorem is provable over~$\RCA_0$.
\end{proposition}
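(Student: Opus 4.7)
The plan is to refine the classical proof of Mirsky's theorem, which assigns to each $x$ the length of the longest $\P$-chain ending at~$x$, by replacing this globally-defined (and hence $\ACA_0$-flavoured) rank by a pair of \emph{locally} defined ranks involving only elements that are $\leq_\NN x$. Let $\P = (\NN, <_\P)$ have finite height~$h$. For each $x \in \NN$, let $\ell(x)$ denote the length of the longest $\P$-chain ending at~$x$ with all entries in $\{0, 1, \ldots, x\}$, and let $H(x)$ denote the length of the longest $\P$-chain starting at~$x$ with all entries in $\{0, 1, \ldots, x\}$. Each such chain can be coded by a sequence of length at most~$x+1$ with entries bounded by~$x$, so both $\ell$ and~$H$ are $\Delta^0_0$-definable by bounded search. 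Since any witnessing chain is a $\P$-chain, $\ell(x), H(x) \leq h$, so I will take $g(x) := h \cdot (\ell(x)-1) + (H(x)-1)$, a $\Delta^0_1$-definable coloring with values in $\{0, 1, \ldots, h^2-1\}$.

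To verify that each $g^{-1}(i)$ is a $\P$-antichain, I argue that whenever $x <_\P y$, the pair $(\ell(x), H(x))$ differs from $(\ell(y), H(y))$, and hence $g(x) \neq g(y)$. If $x <_\NN y$, then extending the witnessing chain $z_1 <_\P \cdots <_\P z_{\ell(x)} = x$ by appending~$y$ yields a $\P$-chain ending at~$y$ of length $\ell(x)+1$ with entries in $\{0, \ldots, y\}$, whence $\ell(y) > \ell(x)$. Symmetrically, if $y <_\NN x$, then prepending~$x$ to the witnessing chain $y = z_1 <_\P \cdots <_\P z_{H(y)}$ (whose entries are all $\leq y < x$) yields a $\P$-chain starting at~$x$ of length $H(y)+1$ with entries in $\{0, \ldots, x\}$, whence $H(x) > H(y)$.

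The main obstacle, and the reason the one-rank proof requires refinement, is the asymmetry between $<_\P$ and $<_\NN$: the single rank $\ell$ does not suffice, because when $y <_\NN x$ and $x <_\P y$, the $\P$-successor~$y$ of~$x$ falls outside the search window $\{0, \ldots, x\}$ used to compute $\ell(x)$, so $\ell(x)$ and $\ell(y)$ may coincide on a comparable pair. Introducing the dual quantity~$H$ handles exactly this case, and the rest of the verification only requires manipulating explicit bounded witnesses, which is freely available in $\RCA_0$ without any appeal to non-trivial induction.
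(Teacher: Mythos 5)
Your proof is correct and follows essentially the same route as the paper: two locally computed ranks (one for $\P$-increasing, one for $\P$-decreasing directions) restricted to the initial segment $\{0,\dots,x\}$, combined into a product coloring with $h^2$ colors, verified by the same case split on the $<_\NN$-order of a $\P$-comparable pair. The only cosmetic difference is that the paper's witnessing sequences are required to be $<_\NN$-increasing rather than merely contained in $\{0,\dots,x\}$, which changes nothing in the argument.
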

\begin{proof}
Let $\P = (\NN, <_\P)$ be a partial order of finite height~$h$.
Let $g_0 : \NN \to h$ be defined as follows: 
given $x \in \NN$, $g_0(x) = s$ where $x_0, \dots, x_s (=x)$ is the longest sequence of elements both $<_\P$-increasing and $<_\NN$-increasing.
The coloring $g_1 : \NN \to h$ is defined accordingly, where the sequence is $<_\P$-decreasing and $<_\NN$-increasing.
Last, let $g : \NN \to h^2$ be defined by $g(x) = \langle g_0(x), g_1(x) \rangle$.

We claim that for every~$i, j < h$, $g^{-1}(\langle i,  j \rangle)$ is a $\P$-antichain.
Indeed, otherwise, there is some~$x, y \in g^{-1}(\langle i,  j \rangle)$ with $x <_\NN y$ and $x <_\P y$ or $y <_\P x$. This contradicts the fact that $x, y \in g^{-1}_0(i)$ in the first case, and $x, y \in g^{-1}_1(j)$ in the second case.
\end{proof}

%Since both Dilworth's theorem and Mirsky's theorem admit weak versions provable over~$\RCA_0$, one can deduce a proof of $\BRT^2_2$ using a weaker version of the Erd\H{o}s-Moser theorem.
Mimouni and Patey~\cite{mimouni2025ramseylike} defined an asymmetric version of the Erd\H{o}s-Moser theorem in which the solution is required to be transitive for one color at least.

\begin{statement}[Half Erd\H{o}s-Moser theorem]
$\HEM$ is the statement \qt{For every coloring $f : [\NN]^2 \to 2$, there is an infinite $f$-transitive subset for some color.}
\end{statement}

By combining \Cref{prop:semi-transitive-dilworth-rca0} and \Cref{prop:weak-mirsky-rca0}, one obtains the following implication. It is unknown whether $\HEM$ implies $\BSig_2$ over~$\RCA_0$.

\begin{proposition}
    $\RCA_0 + \BSig_2 \vdash \HEM \to \BRT^2_2$.
\end{proposition}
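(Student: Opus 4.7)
The plan is to apply $\HEM$ to the given instance $f : [\NN]^2 \to 2$ of $\BRT^2_2$ (with bound $k$ on the size of $f$-homogeneous sets for color~1) to obtain an infinite set $X = \{x_0 < x_1 < \dots\}$ on which $f$ is transitive for some color $c < 2$, then split into cases on $c$. In each case I will produce a coloring $g : \NN \to m$ for some finite~$m$ whose classes correspond, via the enumeration of~$X$, to $f$-homogeneous sets for color~0, and then apply $\RT^1$ (equivalent to $\BSig_2$ over $\RCA_0$) to extract an infinite monochromatic class.

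If $c = 0$, set $h(a, b) = f(x_a, x_b)$. Then $h$ is transitive for color~0, and since any $h$-homogeneous set for color~1 of size $k$ would pull back through the enumeration of~$X$ to an $f$-homogeneous set for color~1 of the same size, there is no such set. \Cref{prop:semi-transitive-dilworth-rca0} then produces a coloring $g : \NN \to \binom{k+1}{2}$ whose classes are $h$-homogeneous for color~0; any infinite such class, transported back to~$X$, is an infinite $f$-homogeneous set for color~0.

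If $c = 1$, define a partial order $\P = (\NN, <_\P)$ by $a <_\P b$ iff $a <_\NN b$ and $f(x_a, x_b) = 1$. Transitivity of~$f$ for color~1 on~$X$ makes $<_\P$ transitive, and a $\P$-chain corresponds precisely to an $f$-homogeneous set for color~1 among $\{x_0, x_1, \dots\}$, so $\P$ has height at most $k - 1$. Weak Mirsky's theorem (\Cref{prop:weak-mirsky-rca0}) then provides a coloring $g : \NN \to \ell$ whose classes are $\P$-antichains; pulling a class $A$ back to $X$ yields a set on which any two elements $x_a, x_b$ with $a <_\NN b$ satisfy $a \not<_\P b$, hence $f(x_a, x_b) = 0$. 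Applying $\RT^1$ to $g$ again supplies an infinite such class. I don't foresee a real obstacle: both auxiliary propositions are provable in~$\RCA_0$, and the only use of extra strength is through $\RT^1 \equiv \BSig_2$ (together, of course, with the hypothesized $\HEM$).
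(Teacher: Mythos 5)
Your proposal is correct and follows essentially the same route as the paper: apply $\HEM$, transfer $f$ to a coloring $h$ on the indices of the resulting transitive set, invoke \Cref{prop:semi-transitive-dilworth-rca0} when the transitivity is for color~0 and weak Mirsky's theorem (\Cref{prop:weak-mirsky-rca0}) when it is for color~1, and finish with $\RT^1$, which follows from $\BSig_2$. Your case $c=1$ merely makes explicit the translation of a color-1-transitive coloring into a partial order of height less than $k$, which the paper leaves implicit.
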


\begin{proof}
Let $f : [\NN]^2 \to 2$ be a coloring with no $f$-homogeneous set for color~1 of size~$k$ for some~$k \in \NN$.
By $\HEM$, there is an infinite subset~$X = \{ x_0 < x_1 < \dots \}$ which is $f$-transitive for some color~$c < 2$.
Let $h : [\NN]^2 \to 2$ be defined by $h(a, b) = f(x_a, x_b)$. In particular, $h$ is transitive for color~$c$ and has no $h$-homogeneous set for color~1 of size~$k$. By \Cref{prop:semi-transitive-dilworth-rca0} if $c = 0$, and by \Cref{prop:weak-mirsky-rca0} if $c = 1$, there is some~$\ell \in \NN$ and a coloring $g : \NN \to \ell$ such that for every~$i < \ell$, $g^{-1}(i)$ is $h$-homogeneous for color~0. By $\RT^1$ which follows from $\BSig_2$, there is some~$i < \ell$ such that $g^{-i}$ is infinite. The set $\{ x_a : g(a) = i \}$ is an infinite $f$-homogeneous set for color~0.
\end{proof}

\section{$\BSig_2$ does not imply $\BRT^2_2$}\label[section]{sec:bsig2-not-urt22}

The goal of this section is to prove \Cref{main:bounded-non-comp-non-isigma2} and its consequences. 
The constructed instance~$f$ of $\BRT^2_2$ with no $\Delta_1(f)$-definable solution will also serve as the $\BRT^2_2$-instance witnessing that $\RCA_0 + \ADS \not \vdash \BRT^2_2$ in \Cref{sec:ads-not-urt22}.
Before constructing such an instance, let us introduce some basic notions about models of arithmetic with restricted induction, and the relation between failure of induction and cuts. See Hajek and Pudlak~\cite{hajek1998metamathematics} for a more extensive introduction.

\subsection{Models of weak arithmetic and cuts}\label[section]{sec:models-arithmetic}

A \emph{structure} in the language of second-order arithmetic is a tuple 
$$
\M = (M, S, 0, 1, +, \times, <)
$$
where $M$ denotes the set of integers, $S \subseteq \P(M)$ is the collection of sets of integers, $0$ and $1$ are constants in~$M$, and $+, \times : M^2 \to M$ are binary operations and $<$ is a binary relation on~$M$.
We shall exclusively work with structures which are models of~$\RCA_0$, and simply call them \emph{models}, and usually abbreviate them by $\M = (M, S)$.
A model $\M = (M, S)$ is \emph{topped} by a set~$Y \in S$ if every set in~$S$ is $\Delta_1(Y)$-definable with parameters in~$M$.

A model where $M$ is the set of standard integers $\omega = \{0, 1, \dots \}$ will be called an \emph{$\omega$-model}, if $M \neq \omega$, then the model is said to be \emph{non-standard}.

\begin{definition}[cut] 
    A \emph{cut} in a model $\M = (M, S, 0, 1, +, \times, <)$ is a nonempty subset $I \subseteq M$ which is closed by successor, and is an initial segment of $M$, that is, if $a \in I$ and $b \leq a$, then $b \in I$.
\end{definition}

In a non-standard model, the elements $0, 1, 1 + 1, 1 + 1 + 1, \dots$ form a cut which can be identified with $\omega$. The amount of induction satisfied by a model corresponds to the difficulty of having cuts that are definable.

\begin{proposition}
    Let $\M \models \RCA_0 + \BSig_n$ for some $n$, then $\M \models \neg \ISig_n$ if and only if there exists some $\Sigma_n^0$ formula $\phi(x)$, such that the set $\{x : \phi(x)\}$ forms a proper cut.
\end{proposition}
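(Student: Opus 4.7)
The plan is to prove both implications by a direct translation between failures of induction and $\Sigma^0_n$-definable cuts. The backward direction is essentially tautological, while the forward direction uses $\BSig_n$ only to ensure that \emph{downward closing} a $\Sigma^0_n$-definable set keeps it $\Sigma^0_n$-definable.

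For the easy direction ($\Leftarrow$), suppose $\phi$ is a $\Sigma^0_n$ formula such that $I := \{ x : \phi(x) \}$ is a proper cut. Since $I$ is a nonempty initial segment, $0 \in I$, so $\phi(0)$ holds; since $I$ is closed under successor, $\forall x(\phi(x) \to \phi(x+1))$ holds; and since $I$ is proper, there is some $y \notin I$, so $\neg \phi(y)$ holds. Thus $\phi$ directly refutes the instance of $\ISig_n$ for $\phi$.

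For the main direction ($\Rightarrow$), suppose $\M \models \neg \ISig_n$, so there is a $\Sigma^0_n$ formula $\psi(x)$ with $\psi(0)$ and $\forall x(\psi(x) \to \psi(x+1))$, but $\neg \psi(y_0)$ for some $y_0 \in M$. The set $\{x : \psi(x)\}$ need not be an initial segment, so I would remedy this by taking $\phi(x) := (\forall y \leq x)\,\psi(y)$. Under $\BSig_n$, bounded universal quantification over a $\Sigma^0_n$-formula is (provably equivalent to) a $\Sigma^0_n$-formula; this is the classical characterization of $\BSig_n$ as closure of $\Sigma^0_n$ under bounded universal quantification, and it is the only nontrivial ingredient in the argument. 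One then verifies that $\{x : \phi(x)\}$ is nonempty (it contains $0$, since $\phi(0) \equiv \psi(0)$), closed under successor (if $(\forall y \leq x)\psi(y)$ then in particular $\psi(x)$, hence $\psi(x+1)$ by the inductive hypothesis on $\psi$, giving $\phi(x+1)$), an initial segment (by construction, since $\phi(x)$ implies $\phi(b)$ for any $b \leq x$), and proper (since $\phi(y_0)$ would imply $\psi(y_0)$).

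The main obstacle, and indeed the only substantive content of the theorem, is the passage from $\psi$ to $\phi$: without $\BSig_n$ the natural closure $(\forall y \leq x)\psi(y)$ would a priori only be $\Pi^0_{n+1}$, so the use of $\BSig_n$ to absorb the bounded universal quantifier into the leading existential of $\psi$ is precisely what keeps $\phi$ in the class $\Sigma^0_n$.
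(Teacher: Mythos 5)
Your proof is correct, and it is the standard argument for this fact: the paper states the proposition without proof (referring the reader to H\'ajek--Pudl\'ak), and the intended justification is exactly what you wrote. The only nontrivial ingredient is the one you isolate, namely that $\BSig_n$ proves $\Sigma^0_n$ is closed under bounded universal quantification, so that the downward closure $(\forall y \leq x)\,\psi(y)$ of a failing induction instance $\psi$ remains $\Sigma^0_n$ and defines the required proper cut.
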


\noindent
\emph{Finite/infinite sets}. In models of weak arithmetic, the various definitions of infinity are not necessarily equivalent, and require a precise terminology.
A subset $X \subseteq M$ is said to be \emph{$M$-bounded} if there exists some $y \in M$ such that $x < y$ for every $x \in X$. If no such $y$ exists, $X$ is said to be \emph{$M$-unbounded}. In a model $\M = (M, S) \models \RCA_0$, the $M$-bounded sets belonging to $S$ will be called \emph{$\M$-finite} and the $M$-unbounded \emph{$\M$-infinite}.

Every element $a \in M$ can be seen as the binary encoding of some $M$-bounded set $F \subseteq M$ and we write $a = \sum_{x \in F} 2^x$. Not all $M$-bounded sets can be $M$-coded by some integer~$a \in M$, but if $\M = (M, S) \models \RCA_0$, then the $M$-coded sets are exactly the $\M$-finite, hence, the $\M$-finite sets only depends on the first order part $M$ of the model $\M$ and we will call them \emph{$M$-finite} to emphasize on that.
\smallskip

\noindent
\emph{Cofinal functions}. The lack of induction can also be measured in the existence of definable cofinal functions from a proper cut to~$M$. This will be of central importance for proving \Cref{main:bounded-non-comp-non-isigma2}.
For $I \subseteq M$ a cut, a function $g : I \to M$ is said to be \emph{cofinal (in $M$)} if its range is $M$-unbounded.

\begin{proposition}
    If $\M \models \RCA_0 + \BSig_n + \neg \ISig_n$ for some $n$, then there exists some increasing $\Sigma_n^0$-definable cofinal function $g : I \to M$ defined on some $\Sigma_n^0$ definable proper cut. 
\end{proposition}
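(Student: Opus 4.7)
The plan is to extract a cofinal function from the $\Sigma_n^0$-definable cut supplied by the previous proposition, using the least-witness (Skolem) function of the defining formula, and then refine it to be increasing.

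By the previous proposition, fix a $\Sigma_n^0$-formula $\varphi(x) \equiv \exists y\,\psi(x,y)$ with $\psi \in \Pi_{n-1}^0$ such that $I := \{x \in M : \varphi(x)\}$ is a proper cut of $M$. On $I$ I would define
\[
f(x) \;=\; \mu y.\,\psi(x,y),
\]
the least $y$ witnessing $\varphi(x)$. Its graph, expressed as \qt{$\psi(x,y)$ holds and no smaller $y'$ satisfies $\psi(x,y')$}, is $\Sigma_n^0$-definable, and $f$ is total on $I$ since $\varphi(x)$ holds throughout $I$.

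The crucial step is showing that $f$ is cofinal in $M$. Suppose for contradiction that some $b \in M$ satisfies $f(x) < b$ for every $x \in I$. Then $I$ is already defined by the formula $\exists y < b\,\psi(x,y)$. Taking the negation gives $\forall y < b\,\neg\psi(x,y)$, whose matrix $\neg\psi$ lies in $\Sigma_{n-1}^0$; applying $\BSig_{n-1}$ (a consequence of $\BSig_n$) to push the bounded universal past the leading unbounded existential, the complement of $I$ becomes $\Sigma_{n-1}^0$-definable, and hence $I$ itself is $\Pi_{n-1}^0$-definable. But a proper $\Pi_{n-1}^0$-definable cut directly contradicts $\IPi_{n-1}$ (witnessed by the defining formula itself, which holds at $0$, is closed under successor, yet fails somewhere), and $\IPi_{n-1}$ equals $\ISig_{n-1}$ by Paris-Kirby, which follows from $\BSig_n$. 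Hence $f$ must be cofinal. The delicate quantifier manipulation collapsing the definitional complexity of $I$ from $\Sigma_n^0$ to $\Pi_{n-1}^0$ under $\BSig_n$ is the main obstacle; everything else is routine bookkeeping.

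Finally, to obtain an increasing variant, I would set $\tilde f(x) = x + \max\{f(y) : y \leq x\}$. Since $I$ is downward closed the max ranges over elements of $I$, and it is $\Sigma_n^0$-definable as a bounded search on the $\Sigma_n^0$-graph of $f$. The function $\tilde f$ is strictly increasing, dominates $f$, and is therefore still cofinal in $M$, as required.
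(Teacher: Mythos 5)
The paper states this proposition as background (with a pointer to H\'ajek--Pudl\'ak) and gives no proof, so the comparison is with the standard argument; your skeleton is exactly that argument and its core is sound. Writing $I=\{x:\exists y\,\psi(x,y)\}$ with $\psi\in\Pi^0_{n-1}$, taking the least-witness function, and arguing that a bound $b$ on its values would make $M\setminus I$ definable by $\forall y<b\,\neg\psi(x,y)$, hence $\Sigma^0_{n-1}$ after $\BSig_{n-1}$-collection, so that $I$ is a proper $\Pi^0_{n-1}$-definable cut contradicting $\IPi_{n-1}\Leftrightarrow\ISig_{n-1}$ (available since $\BSig_n\vdash\ISig_{n-1}$) --- all of this is correct. (One small point you should make explicit: the totality of $\mu y.\psi(x,y)$ on $I$ already uses the least number principle $\LPi_{n-1}$, again supplied by $\ISig_{n-1}$.)

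The genuine gap is in the last step, which you dismiss as routine: the definition $\tilde f(x)=x+\max\{f(y):y\leq x\}$ presupposes that this maximum exists, and in a model of $\BSig_n+\neg\ISig_n$ one cannot freely take maxima of bounded $\Sigma^0_n$-definable sets. Indeed, the principle \qt{every nonempty bounded $\Sigma^0_n$-definable set has a maximum} is \emph{equivalent} to $\ISig_n$ over a weak base (given an inductive $\Sigma^0_n$ formula $\varphi$ with $\varphi(0)$ and $\neg\varphi(b)$, a maximum of $\{x\leq b:\varphi(x)\}$ contradicts the induction step), and $\ISig_n$ is precisely what is assumed to fail. So this step needs an argument, not bookkeeping. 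The cleanest repair is to avoid the maximum of the values altogether and take instead the least common witness bound: set $g(x)=\mu s\,(\forall y\leq x)(\exists w\leq s)\,\psi(y,w)$. Such an $s$ exists by $\BPi_{n-1}$-collection (equivalent to $\BSig_n$) since $[0,x]\subseteq I$; the matrix is $\Pi^0_{n-1}$ under $\BSig_{n-1}$, so the least such $s$ exists by $\LPi_{n-1}$; the graph of $g$ is $\Pi^0_{n-1}\wedge\Sigma^0_{n-1}$, in particular $\Sigma^0_n$; $g$ is non-decreasing and dominates $f$, hence is cofinal by your argument, and $x\mapsto x+g(x)$ is strictly increasing with the required complexity. (Alternatively one can rescue your $\max$ by noting that \qt{some value of $f$ on $[0,x]$ is $\geq v$} is $\Delta^0_n$ with parameters and applying $\IDelta_n$, which is equivalent to $\BSig_n$ by Slaman's theorem --- but that is exactly the kind of care the step requires.)
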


In particular, if $\M = (M, S) \models \RCA_0 + \BSig_2 + \neg \ISig_2$ and $I \subseteq M$ is a $\Sigma^0_2$-cut bounded by some~$k \in M$, one can assume that the function~$g$ can be approximated by a $\Delta^0_1$ sequence of functions $(g_s)_{s \in M}$ such that for every~$s \in M$, $g_s : k \to M$ is increasing, and for every~$x < k$, $s \mapsto g_s(x)$ is non-decreasing, dominated by~$s \mapsto s$, and converges to~$g(x)$. One can also assume that for every $s \in M$ and $x < k$, if $g_{s+1}(x) \neq g_s(x)$ then $g_{s+1}(s) = s+1$. Hence, if $g(x) = s$ for some $x \in I$ and $s \in M$, then $g_s(x) = g(x)$.

\subsection{Main result}

In what follows, a $\Sigma_n(X)$-formula (respectively a $\Pi_n(X)$-formula or $\Delta_n(X)$-formula) is a $\Sigma_n^0$-formula (respectively a $\Pi_n^0$-formula or $\Delta_n^0$-formula) with only $X$ as a second-order parameter and any first-order parameters. 

The core of the argument lies in the following technical lemma:

\begin{lemma}[$\ISig_1$]\label[lemma]{lem:bounded-single-block}
    For every set~$X$, every $b \in \NN$ and $n \in \NN$, there exists a $\Delta_1(X)$ coloring $f_n^b : [\NN]^2 \to 2$ such that:
    \begin{itemize}
        \item There is no set of size $3$ that is $f_n^b$-homogeneous for the color $1$.
        \item For every $e < n$ such that $W^X_e$ is infinite, there exists some $x,y \in W^X_e$ such that $b \leq x < y$ and $f(x,y) = 1$.
    \end{itemize}
    Furthermore, the construction is uniform in $n$ and $b$.
\end{lemma}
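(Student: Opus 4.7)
The plan is a priority-style construction with $n$ requirements, one per $e < n$, each aiming to commit a color-$1$ edge inside $W^X_e \cap [b, \infty)$, with all committed edges kept pairwise vertex-disjoint. This will make the color-$1$ graph a matching, hence triangle-free, ruling out any color-$1$ homogeneous set of size~$3$ in a particularly strong way.

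Concretely, I will define a sequence $(C_s)_{s \in \NN}$ of approximations $C_s : [0, n) \to [\NN]^2 \cup \{\bot\}$ by induction, starting with $C_0(e) = \bot$ for all $e < n$. At stage $s+1$, I process $e = 0, 1, \dots, n-1$ in turn: if $C_s(e) \neq \bot$ I copy $C_{s+1}(e) = C_s(e)$; otherwise, letting $V = \bigcup \{C_{s+1}(e') : e' < e,\ C_{s+1}(e') \neq \bot\}$, I set $C_{s+1}(e)$ to the $<_\mathrm{lex}$-least pair $\{x, y\}$ with $b \leq x < y$, $x, y \in W^X_{e, s+1}$ and $\{x, y\} \cap V = \emptyset$ if such a pair exists, and to $\bot$ otherwise. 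The coloring is then $f_n^b(x, y) = 1$ iff $\{x, y\} = C_s(e)$ for some $s$ and some $e < n$. The recipe is visibly uniform in $n$ and $b$.

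For the combinatorial properties, I will verify triangle-freeness from the fact that the family $\{C_s(e) : e < n,\ C_s(e) \neq \bot\}$ is vertex-disjoint at every stage by construction, making the color-$1$ graph a matching. For coverage, fix $e < n$ with $W^X_e$ infinite. The function $s \mapsto |\{e' < n : C_s(e') \neq \bot\}|$ is non-decreasing and bounded by~$n$, so by $\ISig_1$ it stabilises at some stage $s_\star$. Then $V_e := \bigcup_{e' < e} C_{s_\star}(e')$ is $M$-finite of cardinality at most $2(n-1)$, so $W^X_e \cap [b, \infty) \setminus V_e$ is infinite; at the first stage $s_1 \geq s_\star$ at which two such elements have been enumerated in $W^X_{e, s_1}$, the construction sets $C_{s_1}(e)$ to a pair inside $W^X_e \cap [b, \infty)$, giving the required edge.

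The hard part is $\Delta_1(X)$-definability. Using the stabilisation stage $s_\star$ and the $M$-finite set $E_{s_\star} := \{C_{s_\star}(e) : e < n,\ C_{s_\star}(e) \neq \bot\}$ as first-order parameters (both exist in the model by the stabilisation above), the $\Sigma_1(X)$-formula ``$\exists s\, \exists e\ \{x,y\} = C_s(e)$'' and the $\Delta_0$-formula ``$\{x,y\} \in E_{s_\star}$'' define the same subset of $[\NN]^2$ by stabilisation, giving matching $\Sigma_1(X)$ and $\Pi_1(X)$ descriptions; $\Delta_1$-comprehension then places $f_n^b$ in the model as a $\Delta_1(X)$-definable coloring. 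The main subtlety to get right is exactly this step, since the natural description of the edge set is only $\Sigma_1(X)$ and one genuinely needs $\ISig_1$ on the monotone bounded counting sequence to obtain the $M$-finite stabilisation stage and, with it, the matching $\Pi_1(X)$-description.
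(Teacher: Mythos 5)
There is a genuine gap, and it is exactly at the point you flagged as ``the hard part'': the uniformity clause. Your color-$1$ edge set is the set of pairs ever committed, and a commitment for requirement $e$ can occur at an arbitrarily late stage, namely whenever $W^X_{e}$ happens to enumerate two suitable elements $\geq b$. So the natural description of your coloring is only $\Sigma_1(X)$, and your repair --- passing to the stabilisation stage $s_\star$ and the $M$-finite edge set $E_{s_\star}$ and using them as parameters --- makes each \emph{individual} $f_n^b$ trivially $\Delta_1(X)$, but these parameters are not computable (not even $\Delta_1(X)$-obtainable) from $n$ and $b$: they depend on which $W^X_e$ ever produce suitable pairs. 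Hence what you produce is a family that is uniformly c.e.\ but not uniformly $\Delta_1(X)$ in $(n,b)$, whereas the lemma explicitly demands uniformity, and the sentence ``the recipe is visibly uniform in $n$ and $b$'' conflates uniformity of the enumeration procedure with uniformity of the $\Delta_1$-definition. This is not a cosmetic requirement: in the proof of the main theorem the colorings are composed as $f^i(x,s) = f_{g_s(i)}^{b^i_s}(x,s)$ with $n = g_s(i)$ and $b = b^i_s$ changing computably in $s$, so one needs a single $\Delta_1(X)$ procedure computing $(n,b,x,y) \mapsto f_n^b(x,y)$ --- in fact one needs the value $f_n^b(x,s)$ to be decidable at stage $s$ from the data enumerated so far. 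A static matching chosen inside the $W^X_e$'s cannot be decided on-line in this way, so no choice of parameters rescues your construction for the intended use.

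The paper avoids this by an on-line, state-based construction: the value $f_n^b(x,s)$ is fixed at stage $s$ according to whether $x$ is currently in state \qt{1}, where for each index $e$ that has become \emph{active} (i.e.\ $\card(W^X_e[s]\cap[b,s]) > n^2$) one element of $W^X_e\cap[b,s]$ is kept in state \qt{1}, and all state-\qt{1} elements are dumped into a trash (of size at most $n^2$) each time a new index becomes active. Triangle-freeness then comes from the reset mechanism (if $f(x,y)=f(y,z)=1$ then $y$ entered state \qt{1} after stage $y$, which trashed $x$, forcing $f(x,z)=0$) rather than from vertex-disjointness of a static matching, and the hitting property holds because an infinite $W^X_e$ eventually becomes active and thereafter always has a representative in state \qt{1}, so some later $y \in W^X_e$ receives $f(x,y)=1$. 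Your verification of the two combinatorial bullets for your matching is fine, and your use of $\ISig_1$ for stabilisation is correct as far as it goes; the missing idea is a mechanism making the coloring decidable at the stage of its larger argument, uniformly in $n$ and $b$.
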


\begin{proof}
The coloring $f_n^b$ will be defined by stages, with $f_n^b(x,s)$ being defined at stage $s$ for every $x < s$. At any point of the construction, any $x \in \NN$ will be in state \qt{0} or \qt{1}, meaning that at any further stage~$s$, $f(x, s)$ will be set accordingly. By default, every $x \in \NN$ has the state \qt{0}. We consider a trash $T_s \subseteq [0,s]$ where the elements leaving state \qt{1} will be put, to ensure that they will never go back to that state. Our construction will ensure that $\card T_s \leq n^2$ at every stage $s$.

For every $e < n$, we say that $e$ is \emph{active} at a stage $s$ if $\card (W^X_e[s] \cap [b,s]) > n^2$. Once an element is active, it stays active for the rest of the construction. \\

\textbf{Construction:} at stage $s$, if there exists some new $e < n$ that became active at this stage, we then put every $x < s$ that is in state \qt{1} in state \qt{0} and put them in the trash $T_s$. Then, for every $e' < n$ that is active, we pick an element in $W^X_{e'}[s] \cap [b,s] \setminus T_s$ an put it in state \qt{1}. If no such $e < n$ was found, then the all the elements stay in the same state and no new element is added to the trash.

Finally, whether or not a new $e$ was found, we let $f(x,s) = c$ for every $c < 2$ and every $x < s$ in state \qt{c}.   \\

\textbf{Claim 1}: $T_s \leq n^2$ for every $s \in \NN$, hence the construction is well-defined. Indeed, there are always at most $n$ elements in state \qt{1} at a given time (one per active $e$). And such elements are put into the trash at most $n$ times during the construction (once every time a new $e <n$ became active). Therefore, we can always pick an element in $W^X_{e'}[s] \cap [b,s] \setminus T_s$ if $W^X_{e'}$ is active at stage $s$.
\smallskip

\textbf{Claim 2}: There is no set of size $3$ that is $f_n^b$-homogeneous for the color $1$. Assume by contradiction that we have such a set $\{x < y < z\}$. By construction, $x$ was in state \qt{1} at stage $y$ for $f(x,y)$ to be equal to $1$. Then, for $f(y,z)$ to be equal to $1$, we need $y$ to be in state \qt{1} at stage $z$, but $y$ can only enter state \qt{1} after stage $y$, which would put $x$ in the trash, and make it so that $f(x,z) = 0$.
\smallskip

\textbf{Claim 3}: For every $e < n$ such that $W^X_e$ is infinite, there exists some $x,y \in W^X_e$ such that $b \leq x < y$ and $f(x,y) = 1$.
If $W^X_e$ is infinite, then by $\ISig_1$, there is some stage $s$ such that $\card(W^X_e[s] \cap [b,s]) > n^2$, hence $e$ is eventually active. Then, for every $y > s$, there will be some $x \in W^X_e$ that will be in state \qt{1} at stage $y$, which gives $f(x,y) = 1$. Since $W^X_e$ is infinite, there is one such $y$ in $W^X_e$.

\end{proof}

We are now ready to prove our first main theorem.

\begin{repmaintheorem}{main:bounded-non-comp-non-isigma2}
For every model $\M \models \RCA_0 + \BSig_2 + \neg \ISig_2$ and every set~$Z \in \M$, there is a $\BRT^2_2$-instance~$f \in \M$ with no $\Delta_1(f \oplus Z)$-definable solution. 
\end{repmaintheorem}

\begin{proof}
Let $\M = (M, S) \models \RCA_0 + \BSig_2 + \neg \ISig_2$ and $Z \in S$. In particular, there exists some~$X \in S$ and some proper $\Sigma_2(X)$-cut $I \subseteq M$ bounded by some~$k \in M$, and some cofinal $\Sigma_2(X)$ definable function  $g : I \to M$ with a $\Delta_1(X)$ approximation $(g_s)_{s \in M}$ (see \Cref{sec:models-arithmetic}). Note that the $g_s$ are uniformly $\Delta_1(X)$ in~$s$. As mentioned in \Cref{sec:models-arithmetic}, we assume that for every~$s \in M$, $g_s : k \to M$ is increasing, and for every $x \leq y < k$, $g_s(x) \leq g_s(y)$. Furthermore, for every $s \in M$ and $x < k$, $g_s(x) \leq s$ and if $g_{s+1}(x) \neq g_s(x)$ then $g_{s+1}(x) = s+1$.

For every $b,n \in \NN$, let $f_{n}^{b} : [M]^2 \to 2$ be the $\Delta_1(X \oplus Z)$-colorings defined in \Cref{lem:bounded-single-block}.
For every $i < k$, let $f^i : [M]^2 \to 2$ be defined by stages as follows: \\

\textbf{Construction:} At stage $s$: for every $i < k$, let $b^i_s \leq s$ be the last $b \leq s$ such that the value of $g_b(i)$ has changed, if it exists, otherwise, $b^i_s = 0$. Then let $f^i(x,s) = f_{g_s(i)}^{b^i_s}(x,s) = f^{b^i_s}_{b^i_s}(x,s)$ for every $x < s$ . \\

Finally, let $f : [M]^2 \to 2$ be defined by $f(x,y) = 1$ iff there exists some $i < k$ such that $f^i(x,y) = 1$ for every $x<y \in \NN$. \\

\textbf{Claim 1:} For every $i < k$, there is no $f^i$-homogeneous set for color~1 of size $3$. Notice first that for every $n,b \in \NN$, we have $f_{n}^{b}(x,y) = 0$ when $x < b$, therefore, if $f^i(x,y) = 1$ for some $x,y \in \NN$, then $b_i^y$ must be smaller than $x$ and therefore be equal to $b_i^x$. This means that every $f^i$-homogeneous set for color~1 if also homogeneous for some coloring $f_n^b$, hence that no such set of size three can exists, by our assumptions on the $f_n^b$.
\smallskip

\textbf{Claim 2:} There is some $\ell \in M$ such that there is no $f$-homogeneous set for color~1 of size~$\ell$. Since $\M \models \ISig_1$, the finite Ramsey's theorem for pairs and $k$ colors is true in $\M$. Let $\ell \in M$ be such that every finite coloring $h : [F]^2 \to k$ admits some $h$-homogeneous subset of size $3$ if $\card F \geq \ell$. If some set $F = \{x_0, \dots, x_{\ell - 1}\}$ was $f$-homogeneous for color~1, then, by definition of $f$, the coloring $h : [F]^2 \to k$ sending every pair $x_a,x_b$ to some $i < k$ such that $f^i(x_a,x_b) = 1$ would be well-defined, and by definition of $\ell$, there would exists some $i < k$ and some $f^i$-homogeneous set for the color~1 of size $3$, contradicting Claim 1. 
\smallskip

\textbf{Claim 3:} There is no $\M$-infinite $f$-homogeneous and $\Delta_1(X \oplus Z)$-set for color~0 in~$S$. Assume otherwise, and let $e \in M$ be such that $W^{X \oplus Z}_e$ is $M$-unbounded and $f$-homogeneous for color~0. By assumption on the $g_s$, the smallest stage $s \in \NN$ such that $g_s(i) = g(i)$ is $s = g(i)$. Pick some $x,y \in W^{X \oplus Z}_e$ such that $g(i) \leq x < y$ such that $f^{g(i)}_{g(i)}(x,y) = 1$, then, we have $f^i(x,y) = f^{g(i)}_{g_s(i)}(x,y) = 1$, hence $f(x,y) = 1$ and $W^{X \oplus Z}_e$ is not $f$-homogeneous for color~0.
\end{proof}

\begin{corollary}
$\RCA_0 + \BSig_2 \nvdash \BRT^2_2$.
\end{corollary}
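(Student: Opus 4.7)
The plan is to derive the corollary by invoking \Cref{main:bounded-non-comp-non-isigma2} applied to a suitably chosen topped model. Concretely, I would proceed as follows. First, invoke the standard fact from the model theory of weak arithmetic that there exists a countable topped model $\M = (M, S) \models \RCA_0 + \BSig_2 + \neg \ISig_2$: take a countable first-order model $M \models \BSig_2 + \neg \ISig_2$ (which exists by Paris--Kirby), fix a set $Y$ coding enough parameters so that $S = \{ X : X \leq_T Y \}$ is closed under $\Delta_1$-definability with parameters in $M$, and verify that $(M, S)$ satisfies $\RCA_0$.

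Next, apply \Cref{main:bounded-non-comp-non-isigma2} to this model $\M$ with the topping set $Z = Y$. The theorem yields a $\BRT^2_2$-instance $f \in S$ which admits no $\Delta_1(f \oplus Y)$-definable solution. Since $\M$ is topped by $Y$, every set in $S$ is $\Delta_1(Y)$-definable with parameters, hence a fortiori $\Delta_1(f \oplus Y)$-definable. In particular, no $H \in S$ can be an infinite $f$-homogeneous set for color~$0$. Combined with the fact that $f$ is a bona fide $\BRT^2_2$-instance in $\M$ (i.e.\ there exists $\ell \in M$ such that no $f$-homogeneous set for color~$1$ has size~$\ell$, as guaranteed by the theorem), we conclude $\M \not\models \BRT^2_2$.

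Since $\M$ is a model of $\RCA_0 + \BSig_2$ but not of $\BRT^2_2$, the corollary $\RCA_0 + \BSig_2 \nvdash \BRT^2_2$ follows. The only non-trivial step is the first one, namely exhibiting a topped model of $\RCA_0 + \BSig_2 + \neg \ISig_2$; however this is routine and amounts to packaging any countable first-order model of $\BSig_2 + \neg \ISig_2$ together with the downward Turing closure of an appropriate top $Y$. All remaining content is a direct application of the main theorem.
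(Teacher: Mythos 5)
Your proposal is correct and follows essentially the same route as the paper: build a countable model of $\RCA_0 + \BSig_2 + \neg\ISig_2$ whose second-order part consists of the $\Delta_1$-definable sets (the paper simply takes the top $Y = \emptyset$ and cites Friedman for $\RCA_0 + \BSig_2$), then apply \Cref{main:bounded-non-comp-non-isigma2} to get an instance with no solution in the model. The only cosmetic difference is your extra generality with an arbitrary top $Y$, which is unnecessary but harmless.
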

\begin{proof}
Let $M$ be a countable model of $\mathsf{I}\Sigma_1  + \mathsf{B}\Sigma_2 +  \neg \mathsf{I}\Sigma_2$.
Let $\M = (M, S)$ be the model whose second-order part $S$ consists of the $\Delta_1$-definable sets with parameters in~$M$. By Friedman~\cite{friedman1976systems}, $\M \models \RCA_0 + \BSig_2$, but by \Cref{main:bounded-non-comp-non-isigma2}, $\M \not \models$ \qt{for every coloring $f : [\NN]^2 \to 2$ and every~$\ell \in \NN$ such that there is no $f$-homogeneous set for color~1 of size~$\ell$, there is an infinite $f$-homogeneous set for color~0.}
\end{proof}

\begin{corollary}
The statement \qt{$\BRT^2_2$ is computably true} is equivalent to $\ISig_2$ over $\RCA_0$.
\end{corollary}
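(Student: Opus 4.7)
The plan is to prove both directions of the equivalence separately: $\RCA_0 + \ISig_2 \vdash {}$\qt{$\BRT^2_2$ is computably true}, and $\RCA_0 + {}$\qt{$\BRT^2_2$ is computably true} $\vdash \ISig_2$.

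For the forward direction, I would revisit Belanger's proof of \Cref{prop:isig2-urt22} and observe that the solution produced there is in fact already $\Delta_1(f)$-definable. Given an instance $f$ with no color-1 homogeneous set of the given size, the tree $T$ constructed in that proof is $\Delta_1(f)$, and the $\BSig_2 + \ISig_2$ reasoning pins down a specific level $\ell$ and a specific node $\sigma \in T$ of length $\ell - 1$ having infinitely many immediate children. Both $\ell$ and $\sigma$ can be treated as first-order parameters, so the final solution $\{ x : \sigma \cdot x \in T \}$ is uniformly $\Delta_1(f, \sigma, \ell)$-definable. Consequently $\RCA_0 + \ISig_2$ proves that every $\BRT^2_2$-instance admits a $\Delta_1(f)$-definable solution.

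For the reverse direction, assume \qt{$\BRT^2_2$ is computably true} and let $\M$ be any model of $\RCA_0$ satisfying this statement. Since admitting a $\Delta_1(f)$-definable solution trivially implies admitting a solution, $\M \models \BRT^2_2$; hence by Hirst~\cite[Theorem 6.8]{hirst1987combinatorics}, $\M \models \BSig_2$. Suppose toward a contradiction that $\M \not\models \ISig_2$. Then $\M \models \RCA_0 + \BSig_2 + \neg \ISig_2$, and applying \Cref{main:bounded-non-comp-non-isigma2} with $Z = \emptyset$ produces an instance $f \in \M$ of $\BRT^2_2$ with no $\Delta_1(f)$-definable solution in $\M$, contradicting the hypothesis. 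Therefore $\M \models \ISig_2$.

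There is no genuine obstacle beyond bookkeeping: both directions are short assemblies of already-established results. The one point to double-check is the precise formalization of \qt{$\BRT^2_2$ is computably true} (as the existence of a pair of indices $e_\Sigma, e_\Pi$ whose $\Sigma_1(f)$ and $\Pi_1(f)$ definitions agree and define a solution to $f$), together with the observation that Belanger's construction uses only $\Sigma^0_2$-formulas in $f$ with first-order parameters, so that the produced witness genuinely falls under this definability notion.
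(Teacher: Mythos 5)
Your proof is correct, and your reverse direction is exactly the paper's: derive $\BSig_2$ from Hirst's result via $\BRT^2_2$, then invoke \Cref{main:bounded-non-comp-non-isigma2} (with $Z = \emptyset$) to rule out $\neg\ISig_2$. The forward direction, however, follows a different route from the paper's main argument. You effectivize Belanger's proof of \Cref{prop:isig2-urt22} directly, observing that the tree $T$ is uniformly $\Delta_1(f)$ and that the level $\ell$ and node $\sigma$ furnished by the $\BSig_2/\ISig_2$ reasoning are first-order parameters, so the witness $\{x : \sigma\cdot x \in T\}$ is $\Delta_1(f)$-definable with parameters --- which is exactly the definability notion the paper uses, since its convention for $\Delta_1(X)$-formulas allows arbitrary first-order parameters. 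The paper instead argues model-theoretically: given $\M \models \RCA_0 + \ISig_2$ and an instance $f$, it passes to the $\omega$-submodel $(M, \Definable{\Delta_1}{f})$, which by Friedman's theorem still satisfies $\RCA_0 + \ISig_2$, applies \Cref{prop:isig2-urt22} inside that submodel, and pulls the solution back. The paper's route is more general --- it shows that \emph{any} $\Pi^1_2$-problem provable from $\ISig_2$ over $\RCA_0$ is computably true over $\RCA_0 + \ISig_2$, with no need to inspect the effectivity of a particular proof --- while your route is more concrete and is explicitly acknowledged by the paper in a closing remark (\qt{one could also have simply noticed that the proof of \Cref{prop:isig2-urt22} yields a computable solution}). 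The only point to keep straight in your version is that the verification that this explicit set is indeed an infinite homogeneous solution is carried out in $\RCA_0 + \ISig_2$, which is precisely what the proof of \Cref{prop:isig2-urt22} provides; granting that, both directions are complete.
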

\begin{proof}
Let $\M = (M, S) \models \RCA_0 + \ISig_2$ and let $f \in S$ be an instance of~$\BRT^2_2$.
By Friedman~\cite{friedman1976systems}, $\M_f = (M, \Definable{\Delta_1}{f}) \models \ISig_2$, where $\Definable{\Delta_1}{f}$ denotes the $\Delta_1(f)$-definable subsets of~$M$. By \Cref{prop:isig2-urt22}, $\M_f \models \BRT^2_2$, so there is some~$H \in \Definable{\Delta_1}{f}$ such that $\M_f \models $ \qt{$H$ is an $\BRT^2_2$-solution to~$f$}. In particular, $H \in S$, so $\M \models $ \qt{there is a $\Delta_1(f)$-definable $\BRT^2_2$-solution to~$f$.} Thus, $\M \models $ \qt{$\BRT^2_2$ is computably true}.

Suppose now that $\M \models \RCA_0 + $ \qt{$\BRT^2_2$ is computably true}. By Hirst~\cite{hirst1987combinatorics}, $\RCA_0 \vdash \BRT^2_2 \to \BSig_2$, so $\M \models \BSig_2$. Then, by \Cref{main:bounded-non-comp-non-isigma2}, $\M \models \ISig_2$.
\end{proof}

Note that the previous argument is very general: if $\ISig_2$ implies a $\Pi^1_2$-problem~$\Psf$ over $\RCA_0$, then $\RCA_0 + \ISig_2 \vdash$ \qt{$\Psf$ is computably true}. One could also have simply noticed that the proof of \Cref{prop:isig2-urt22} yields a computable solution.

\section{$\ADS$ does not imply $\BRT^2_2$}\label[section]{sec:ads-not-urt22}

The goal of this section is to prove \Cref{main:bsig2-not-isig2-ads-not-urt22} and derive its consequences, among which the fact that $\ADS$ does not imply $\BRT^2_2$ over~$\RCA_0$. The statement $\ADS$ was introduced by Hirschfeldt and Shore~\cite{hirschfeldt_combinatorial_2007} and later studied by many authors~\cite{manuel_lerman_separating_2013,patey_partial_2018,chong2021pi11}.

\begin{statement}[Ascending Descending Sequence]
$\ADS$ is the statement \qt{Every linear order $\L = (\NN, <_\L)$ admits an infinite $\L$-ascending or $\L$-descending sequence.}
\end{statement}

Following the standard decomposition of $\RT^2_2$ into its stable and cohesive version, Hirschfeldt and Shore~\cite{hirschfeldt_combinatorial_2007} proved that $\ADS$ is equivalent to $\SADS + \COH$ over $\RCA_0$, where $\SADS$ and $\COH$ are defined below.
Given a linear order~$\L = (\NN, <_\L)$, an element~$x \in \NN$ is \emph{small} (\emph{large}) if for all but finitely many~$y \in \NN$, $x <_\L y$ ($x >_\L y$).
A linear order is \emph{stable} if every element is either small or large.

\begin{statement}[Stable Ascending Descending Sequence]
$\SADS$ is the statement \qt{Every stable linear order $\L = (\NN, <_\L)$ admits an infinite $\L$-ascending or $\L$-descending sequence.}\footnote{The statement $\SADS$ was originally defined by Hirschfeldt and Shore~\cite{hirschfeldt_combinatorial_2007} only for linear orders of type $\omega+\omega^*$. Indeed, the remaining stable order types $\omega+k$ and $k+\omega^*$ admit trivial solutions over~$\RCA_0$. The two definitions are therefore equivalent over~$\RCA_0$.}
\end{statement}

Given an infinite sequence of sets $\vec{R} = R_0, R_1, \dots$, an infinite set~$C$ is \emph{$\vec{R}$-cohesive}
if for every~$n \in \NN$, $C \subseteq^* R_n$ or $C \subseteq^* \overline{R}_n$, where $\subseteq^*$ means \emph{almost included}.

\begin{statement}[Cohesiveness]
$\COH$ is the statement \qt{Every countable sequence of sets admits an infinite cohesive set.}
\end{statement}

Over $\RCA_0 + \BSig_2$, the cohesiveness principle can be better understood as a $\Delta^0_2$-version of weak K\"onig's lemma, that is, the statement \qt{For every $\Delta^0_2$ infinite binary tree~$T \subseteq 2^{<\NN}$, there is a $\Delta^0_2$-path $P \in [T]$} (see Jockusch and Stephan~\cite{jockusch1993cohesive} for a computability-theoretic version, and Belanger~\cite{belanger_conservation_2015} for a formalized version).

\subsection{Strategy to prove \Cref{main:bsig2-not-isig2-ads-not-urt22}}

Before going into the technicalities, let us first outline the proof of \Cref{main:bsig2-not-isig2-ads-not-urt22}.
It is divided into the following steps:
\smallskip

\noindent
\emph{Step 1.}
Consider any countable model~$\M_0 = (M, S_0)$ of $\RCA_0 + \BSig_2 + \neg \ISig_2$ topped by some set~$Z_0$. By \Cref{main:bounded-non-comp-non-isigma2}, $\M_0$ contains a stable instance~$f$ of $\BRT^2_2$ with no $\Delta_1(Z_0)$-definable solution. Let $A = \{ x \in M : \forall^\infty y f(x, y) = 0 \}$. The set $A$ is $\Definable{\Delta_2}{\M_0}$ and $Z_0$-immune. Actually, the proof of \Cref{main:bounded-non-comp-non-isigma2} shows that $A$ satisfies a stronger notion of immunity, that we call for now \emph{sufficient $Z_0$-immunity}. 
\smallskip

\noindent
\emph{Step 2.} Given a countable model~$\M_n = (M, S_n)$ of $\RCA_0 + \BSig_2$ topped by a set~$Z_n$, and any sufficiently $Z_n$-immune set~$A \subseteq M$, we shall prove that for every instance $\L$ of $\SADS$ in~$\M_n$, there is a solution~$G$ such that $A$ is sufficiently $G \oplus Z_n$-immune, and $\M_n[G] \models \RCA_0 + \BSig_2$. Then, let $\M_{n+1} = \M_n[G]$ be the $\omega$-extension of~$\M_n$ topped by $Z_{n+1} = G \oplus Z_n$ (\Cref{prop:sads-eff}).
\smallskip

\noindent
\emph{Step 3.} Given a countable model~$\M_n = (M, S_n)$ of $\RCA_0 + \BSig_2$ topped by a set~$Z_n$, and any sufficiently $Z_n$-immune set~$A \subseteq M$, we shall prove that for every instance $\vec{R}$ of $\COH$ in~$\M_n$, there is a solution~$G$ such that $A$ is sufficiently $G \oplus Z_n$-immune, and $\M_n[G] \models \RCA_0 + \BSig_2$. Then, let $\M_{n+1} = \M_n[G]$ be the $\omega$-extension of~$\M_n$ topped by $Z_{n+1} = G \oplus Z_n$ (\Cref{prop:coh-eff}).
\smallskip

\noindent
\emph{Step 4.} Apply iteratively Step~2 and Step~3 to obtain an infinite sequence of countable models $\M_0, \M_1, \dots$ of $\RCA_0 + \BSig_2$ such that for each~$n \in \omega$, $\M_{n+1}$ is topped by $Z_n$, $\omega$-extends $\M_n$ and $A$ is sufficiently $Z_n$-immune, where~$A$ is the $\Delta_2(\M_0)$ set defined in Step~1. A union of models of $\RCA_0 + \BSig_2$ is again a model of $\RCA_0 + \BSig_2$, and if the instances of $\SADS$ and $\COH$ are properly listed, then $\Nc = \bigcup_n \M_n$ is a model of $\SADS + \COH$, hence of $\ADS$. Moreover, $A$ is sufficiently immune relative to every set in~$\Nc$, so the instance of~$\BRT^2_2$ defined in Step~1 has no solution in~$\Nc$, hence $\Nc \not \models \BRT^2_2$.

\subsection{Combinatorics of $\ADS$}\label[section]{sec:combinatorics-ads}

In order to get a better grasp of the appropriate notion of immunity to consider, let us dig into the combinatorics of~$\ADS$. There exists mainly two forcing constructions to build solutions to $\ADS$. The first one is asymmetric, and was used by Hirschfeldt and Shore~\cite[Proposition 2.26]{hirschfeldt_combinatorial_2007} to prove that $\ADS$ does not imply the existence of DNC functions over $\RCA_0$. A function $g : \NN \to \NN$ is \emph{diagonally non-$X$-computable} ($X$-DNC) if for every~$e \in \NN$, $g(e) \neq \Phi^X_e(e)$. The second forcing construction is symmetric, and was defined by Lerman, Solomon and Towsner~\cite{manuel_lerman_separating_2013} and later used by Patey~\cite{patey_partial_2018} to give an alternative proof of \cite[Proposition 2.26]{hirschfeldt_combinatorial_2007}.

The combinatorics of $\ADS$ are already witnessed by stable linear orders, so let us fix a computable stable linear order $\L = (\NN, <_\NN)$. If it is of order type $\omega+k$ or $k+\omega^*$, then there is an infinite computable $\L$-ascending or $\L$-descending sequence, so let us assume that it is of order type $\omega+\omega^*$. Let $U^0$ and $U^1$ be its sets of $\L$-small and $\L$-large elements, respectively. Note that $U^0 \sqcup U^1 = \NN$ and both sets are $\Delta^0_2$.

The natural notion of forcing for building solutions to~$\L$ is of disjunctive nature. A \emph{condition} is a pair $(\sigma^0, \sigma^1)$ such that $\sigma^0$ and $\sigma^1$ are finite $\L$-ascending and $\L$-descending sequences, respectively. To ensure extensibility of each initial segment, one furthermore requires that $\sigma^0 \subseteq U^0$ and $\sigma^1 \subseteq U^1$. Because of this restriction, being a condition is a $\Delta^0_2$ predicate. A condition $(\tau^0, \tau^1)$ \emph{extends} $(\sigma^0, \sigma^1)$ if $\sigma^i \preceq \tau^i$ for both~$i < 2$.

Thankfully, one can define a computable notion of \emph{split pair}, which is a pair $(\sigma^0, \sigma^1)$ such that $\sigma^0$ is $\L$-ascending, $\sigma^1$ is $\L$-descending, and $\max_\L \sigma^0 <_\L \min_\L \sigma^1$. The core of the combinatorics of $\ADS$ lies in the following fact:

\begin{lemma}
If $(\sigma^0, \sigma^1)$ is a condition and $(\tau^0, \tau^1)$ is a split pair such that $\sigma^i \preceq \tau^i$ for both~$i < 2$, then either $(\tau^0, \sigma^1)$ or $(\sigma^0, \tau^1)$ is a valid condition extending $(\sigma^0, \sigma^1)$.
\end{lemma}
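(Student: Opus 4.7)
The plan is to argue by contradiction, showing that the two candidate extensions cover complementary failure modes that cannot both occur simultaneously. First I would unpack exactly what is required of each candidate. Since $\tau^0$ inherits the $\L$-ascending property from the split pair hypothesis, and $\sigma^1 \subseteq U^1$ already holds, the only way $(\tau^0, \sigma^1)$ can fail to be a condition is that some element of $\tau^0 \setminus \sigma^0$ lies in $U^1$ rather than $U^0$. Symmetrically, the only obstruction to $(\sigma^0, \tau^1)$ is that some element of $\tau^1 \setminus \sigma^1$ lies in $U^0$. The clause ``extending $(\sigma^0, \sigma^1)$'' requires nothing extra, since $\sigma^i \preceq \tau^i$ is part of the hypothesis, so once either candidate is shown to be a valid condition, the extension property is automatic.

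Next, assume for contradiction that both candidates fail. I would pick witnesses $x \in \tau^0 \cap U^1$ and $y \in \tau^1 \cap U^0$ (note that the failures force $\tau^0 \setminus \sigma^0$ and $\tau^1 \setminus \sigma^1$ to be nonempty, so the split pair property applies nontrivially). The key arithmetic input is that $\L$ is of order type $\omega + \omega^*$, so every $\L$-small element is $\L$-below every $\L$-large element; in particular $y <_\L x$. On the other hand, the split pair property gives $x \leq_\L \max_\L \tau^0 <_\L \min_\L \tau^1 \leq_\L y$, hence $x <_\L y$, contradicting the previous inequality.

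The main obstacle is conceptual rather than technical: one must recognize that the two defining constraints $\sigma^0 \subseteq U^0$ and $\sigma^1 \subseteq U^1$ of a condition split cleanly along the $\omega/\omega^*$ partition of a stable linear order, so that the two ``mixed'' extensions $(\tau^0, \sigma^1)$ and $(\sigma^0, \tau^1)$ form a disjunction whose simultaneous failure is precisely ruled out by the split pair property. Once this dual structure is identified, the rest is a one-line comparison of orders.
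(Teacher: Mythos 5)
Your proof is correct and is essentially the paper's argument: the paper's one-line proof observes that, by the split-pair inequality $\max_\L \tau^0 <_\L \min_\L \tau^1$ together with the small/large dichotomy, either $\tau^0 \subseteq U^0$ or $\tau^1 \subseteq U^1$, which is exactly your contradiction with explicit witnesses phrased contrapositively. No gap; the reduction to ``only the $U^i$-membership of the new elements can fail'' and the comparison $y <_\L x$ versus $x <_\L y$ are the same combinatorial core.
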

\begin{proof}
Since $\max_\L \sigma^0 <_\L \min_\L \sigma^1$, either $\tau^0 \subseteq U^0$ or $\tau^1 \subseteq U^1$.
\end{proof}

Let us explain how the combinatorics above enable to have a short c.e.\ description of large subsets of $G$-c.e.\ sets, where~$G$ is the constructed solution. Fix a condition $p = (\sigma^0, \sigma^1)$ and two Turing indices $e_0, e_1 \in \NN$. Let $a_0, a_1 : \NN \to \NN$ be defined as follows: for every~$t \in \NN$, $W_{a_0(t)}$ and $W_{a_1(t)}$ search for a split pair $(\tau^0, \tau^1)$ below $p$ such that $\card W^{\tau^0}_{e_0} \geq t$ and $\card W^{\tau^1}_{e_1} \geq t$. If such a split pair exists, then $W_{a_0(t)} = W^{\tau^0}_{e_0}$ and $W_{a_1(t)} = W^{\tau^1}_{e_1}$. 
By the combinatorics above, for every~$t$, if such a pair exists, then there is an extension~$q$ of~$p$ forcing either $W_{a_0(t)} \subseteq W^{G^0}_{e_0}$ or $W_{a_1(t)} \subseteq W^{G^1}_{e_1}$. Otherwise, there is an extension~$q$ of~$p$ forcing either $\card W^{G^0}_{e_0} < t$ or $\card W^{G^1}_{e_1} < t$. Last, note that $p$, $e_0$ and $e_1$ being fixed, the functions $a_0$ and $a_1$ have a growth in~$\mathcal{O}(t)$ for an appropriate numbering.

Such combinatorics are particularly useful when dealing with preservation of effective notions of immunity.

\begin{definition}
Fix a function~$h : \NN \to \NN$ and a set~$Z$.
A set $A$ is said to be \emph{$h$-effectively $Z$-immune} if for every Turing index $e \in \NN$, $\card W_e^Z \geq h(e)$ implies $W_e^Z \not \subseteq A$.
\end{definition}

We simply say that~$A$ is \emph{effectively $Z$-immune} if it is $h$-effectively immune for a $Z$-computable function~$h$.
Arslanov, Nadirov, and Solov'ev~\cite{arslanov1977completeness} proved that every effectively immune set computes a DNC function, and conversely, Jockusch~\cite{jockusch1989fpf} proved that every DNC function computes an effectively immune set, witnessed by the identity function. Let $A$ be such an effectively immune set. By slightly modifying the above argument, using Kleene's fixpoint theorem, one can easily obtain indices $a_0$ and $a_1$ such that $W_{a_0}$ and $W_{a_1}$ search for a split pair  $(\tau^0, \tau^1)$ below $p$ such that $\card W^{\tau^0}_{e_0} \geq a_0$ and $\card W^{\tau^1}_{e_1} \geq a_1$. Then, if such a split pair exists, there is an extension $q$ forcing $W_{a_0} \subseteq W^{G^0}_{e_0}$ or $W_{a_1} \subseteq W^{G^1}_{e_1}$, hence forcing $W^{G^0}_{e_0} \not \subseteq A$ or $W^{G^1}_{e_1} \not \subseteq A$.

In our setting, the appropriate notion of immunity will be the following variant of effective immunity. Let $(n, a) \mapsto 2_n(a)$ be the tetration function defined inductively as follows: $2_0(a) = a$ and $2_{n+1}(a) = 2^{2_n(a)}$. 

\begin{definition}
Fix a set $Z$, an unbounded $\Delta_2(Z)$ set~$B$ and some standard $n \in \omega$. A set~$A \subseteq M$ is \emph{$B$-block $Z$-immune with cost~$n$} if for every~$b \in B$, and every $x < b$, $\card W^Z_x \geq 2_n(b)$ implies $W^Z_x \not \subseteq A$.
\end{definition}

\subsection{Effective constructions over $\BSig_2 + \neg \ISig_2$}

The proofs of \Cref{prop:sads-eff,prop:coh-eff} are done by effectivizations of forcing constructions over non-standard models. By an isomorphism theorem of Fiori-Carones, Kołodziejczyk, Wong, and Yokoyama~\cite{fiori2021isomorphism}, there are some deep reasons why, given a $\Pi^1_2$-problem~$\Psf$, any $\omega$-extension of a model of~$\RCA_0 + \BSig_2 + \neg \ISig_2$ into a model of~$\RCA_0 + \BSig_2 + \Psf$ should be through a first-jump control construction effective in any PA degree over $\emptyset'$. This will be in particular the case in our construction.

Effective constructions in models of arithmetic with restricted induction raise some new issues.
Indeed, given a countable model~$\M = (M, S)$, suppose one builds a set~$G \subseteq M$ satisfying a countable sequence of requirements $(\R_e)_{e \in M}$ using an effectivized forcing argument. Let $(\PP, \leq)$ be the associated notion of forcing. In order to effectivize the argument, each condition~$p \in \PP$ is given a code $\langle p \rangle \in \NN$ and the decreasing sequence of conditions must be represented by a $\emptyset'$-effective decreasing sequence of codes.

In the standard setting, that is, $M = \omega$, one can consider each requirement $\R_e$ individually, prove a density lemma saying that every condition~$p$ has an extension~$q$ forcing~$\R_e$, and then build an infinite descending sequence of conditions $p_0 \geq p_1 \geq \dots$ such that $p_{n+1}$ forces $\R_n$. On the other hand, if $\M \models \BSig_2 + \neg \ISig_2$, it might be the case that the decreasing sequence is defined only on a proper $\Sigma^0_2$-cut~$I \subseteq M$, that is, only $(p_i)_{i \in I}$ is defined and only the requirements $(\R_i)_{i \in I}$ are satisfied. Note that in this case, the sequence of codes $(\langle p_i\rangle)_{i \in I}$ is unbounded in~$M$, otherwise the cut~$I$ would admit a $\Delta^0_2$-description, contradicting the fact that $\M \models \BSig_2$.

To overcome this issue, instead of proving a density lemma for each requirement $\R_e$ individually, we will prove a stronger lemma, stating that for every~$k \in M$ and every condition~$p$, there is an extension~$q$ forcing $\R_e$ for every~$e < k$ simultaneously. Then, using this density lemma, build a decreasing sequence of conditions $p_0 \geq p_1 \geq \dots$ such that $p_{i+1}$ forces $\R_e$ for every~$e < \langle p_i\rangle$ simultaneously. Even if the sequence is only defined on a proper cut~$I$, all the requirements $(\R_e)_{e \in M}$ will be forced. This technique is known as \emph{Shore blocking}.

\subsection{The $\SADS$ step}

We now prove the existence of $\omega$-extensions which add solutions to instances of~$\SADS$ while preserving the fact that the instance of~$\BRT^2_2$ has no solution. %various steps of \Cref{main:bsig2-not-isig2-ads-not-urt22}.

\begin{proposition}\label[proposition]{prop:sads-eff}
    Let $\M = (M,S) \models \RCA_0 + \BSig_2$ be a countable model topped by some set $Y$. Let~$B$ be an $M$-unbounded $\Delta_2(Y)$ set and fix $n \in \omega$. Then, for every set $A \subseteq M$ that is $B$-block $Y$-immune with cost~$n$ and every instance $\L = (M, <_\L)$ of $\SADS$ in $S$, there exists some $M$-unbounded set $G \subseteq M$ and some $M$-unbounded $\Delta_2(Y)$ set $\hat{B}$ such that:
    \begin{itemize}
        \item $G$ is $\L$-monotonous 
        \item $Y' \geq_T (G \oplus Y)'$ and in particular $\M[G] \models \RCA_0 + \BSig_2$
        \item $A$ is $\hat{B}$-block $(G \oplus Y)$-immune with cost $n+2$.
    \end{itemize}
\end{proposition}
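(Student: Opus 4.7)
My plan is to construct $G$ via an effective disjunctive forcing in the style of Lerman-Solomon-Towsner, as recalled in Section~\ref{sec:combinatorics-ads}. Since instances of $\SADS$ of order type $\omega+k$ or $k+\omega^*$ admit trivial $Y$-computable solutions, I may assume $\L$ is of type $\omega+\omega^*$, so the sets $U^0, U^1$ of $\L$-small and $\L$-large elements are $\Delta_2(Y)$ and partition $M$. Conditions are split pairs $p=(\sigma^0,\sigma^1)$ with $\sigma^0$ an $\L$-ascending sequence, $\sigma^1$ an $\L$-descending sequence, and $\max_\L \sigma^0 <_\L \min_\L \sigma^1$; the underlying notion of forcing insists that $\sigma^i \subseteq U^i$, but extension is governed by the $\Delta_1(Y)$ split-pair relaxation and the disjunction lemma. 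I set $\hat B=\{b\in M:2_2(b)\in B\}$, which is $M$-unbounded and $\Delta_2(Y)$, precisely so that the identity $2_n(2_2(b))=2_{n+2}(b)$ converts the target cost back to the hypothesis cost.

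For each $k\in M$, I collapse all sub-requirements indexed by $e<k$ into a single Shore block $\R_k$. The block combines first-jump control--deciding whether $\Phi_e^{G^i\oplus Y}(e)\converge$ for each $e<k$ and $i<2$--with immunity preservation, ensuring that for every $b\in\hat B$ with $e<b$, if $|W^{G^i\oplus Y}_e|\geq 2_{n+2}(b)$ then $W^{G^i\oplus Y}_e\not\subseteq A$. The density lemma states: for every condition $p$ and every $k\in M$, there is a $Y'$-computably found extension forcing $\R_k$. The preservation case is proven via Kleene's recursion theorem: for each triple $(e_0,e_1,b)$, I produce $Y$-indices $a_0,a_1<2_2(b)$ such that each $W^Y_{a_i}$ searches for a split pair $(\tau^0,\tau^1)$ extending $p$ with $|W^{\tau^i\oplus Y}_{e_i}|\geq 2_n(2_2(b))$, and upon finding one dumps the corresponding set. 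If such a pair ever exists, the disjunction lemma lets us pick a side $i<2$ on which $(\sigma^0,\sigma^1)$ validly extends through $\tau^i$; since $a_i<2_2(b)\in B$ and $|W^Y_{a_i}|\geq 2_n(2_2(b))$, the $B$-block $Y$-immunity of $A$ with cost $n$ supplies some $x\in W^Y_{a_i}\setminus A$, which witnesses $W^{G^i\oplus Y}_{e_i}\not\subseteq A$. Otherwise, the requirement is satisfied vacuously since $|W^{G^i\oplus Y}_{e_i}|$ is forced below $2_{n+2}(b)$. First-jump control is folded into the same iteration by the usual search-or-commit step.

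With the density lemma in hand, I build the descending sequence $p_0\geq p_1\geq\dots$ by $Y'$-effectively choosing $p_{i+1}$ to extend $p_i$ and force $\R_k$ for all $k<\langle p_i\rangle$. Even if this sequence is defined only on a proper $\Sigma_2(Y)$-cut $I\subseteq M$, the codes $\langle p_i\rangle$ must be $M$-unbounded (else the cut would admit a $\Delta_2(Y)$ upper bound, contradicting $\M\models\BSig_2$), so every requirement $\R_k$ for $k\in M$ is satisfied. I take $G$ to be whichever of $G^0,G^1$ is $M$-unbounded, giving the $\L$-monotonous solution; first-jump control yields $Y'\geq_T(G\oplus Y)'$, and hence $\M[G]\models\RCA_0+\BSig_2$ by the standard preservation result. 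The main obstacle is executing the Kleene fixed point and the Shore-blocked density argument inside a non-standard model of $\BSig_2+\neg\ISig_2$ while keeping the codes $a_0,a_1$ provably below $2_2(b)$; the tetration bound is comfortable enough to absorb both the pairing of indices and the code of the search routine, but verifying this uniformly within the weak induction regime, and handling the partiality of the forcing sequence on a $\Sigma_2(Y)$-cut, is the delicate combinatorial heart of the argument.
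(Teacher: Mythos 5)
There is a genuine gap in how you handle $\hat B$ and the size of the auxiliary indices. You fix $\hat B=\{b: 2_2(b)\in B\}$ in advance, and your preservation step requires, for every $b\in\hat B$ and every $e<b$, auxiliary $Y$-indices $a_0,a_1<2_2(b)$ whose machines search for a split pair \emph{extending the current condition} $p$ with $|W^{\tau^i\oplus Y}_{e_i}|\geq 2_{n+2}(b)$. Such an index must encode $p$ (as well as $e_0,e_1,b$), and the codes of the conditions grow $M$-unboundedly along the construction — the elements appended to $\sigma^0,\sigma^1$ come from unbounded searches, so $\langle p_s\rangle$ bears no relation to the block counter $k_s$. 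Hence for any fixed $b$ the bound $a_i<2_2(b)$ can only be met while the condition is still small, i.e.\ during a bounded initial portion of the construction; there is no way to schedule all the pre-specified blocks $b\in\hat B$ (which include small $b$, and whose enumeration is independent of how fast the conditions grow) so that each is treated in time. The "tetration is comfortable enough" remark is exactly where this breaks: the tetration absorbs $e_0,e_1,b$ and the search routine, but not the condition. The paper's proof avoids this by making the preservation requirement $\S^i_k$ \emph{existential over the block}: given the condition and the block of formulas, one first bounds the encoding function $S$ and then picks $\ell\in B$ with $\ell>2^{2^k}$ and $S(\log_2\log_2\ell)<\ell$, so the auxiliary indices are automatically below $\ell$; the set $\hat B$ is then built \emph{during} the construction as the collection of the values $\log_2\log_2(\ell)$ actually used (it is $\Delta_1(Y')$, hence $\Delta_2(Y)$, and unbounded because unboundedly many $\S$-blocks are forced). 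The statement only asks for \emph{some} unbounded $\hat B$, and the proof must exploit that freedom; a $\hat B$ declared up front cannot work.

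Two smaller points. First, treating the preservation requirements pairwise in $(e_0,e_1)$ and invoking the disjunction lemma does not by itself yield a single side $i<2$ on which \emph{all} requirements (jump control, preservation, unboundedness) hold for every $k\in M$; the paper forces, for each block $k$, the four disjunctions $\R^0_k\vee\R^1_k$, $\R^0_k\vee\S^1_k$, $\S^0_k\vee\R^1_k$, $\S^0_k\vee\S^1_k$ (plus $\T^0_k\wedge\T^1_k$), using an $M$-finite $(\sigma^0,\sigma^1)$-tree of split pairs to force a whole block of $\Sigma_1$/$\Pi_1$ facts on one side simultaneously, and then extracts a single good side by a pairing argument. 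Second, choosing $G$ to be "whichever of $G^0,G^1$ is $M$-unbounded" is not the right selection criterion: the $\T$-requirements make both sides unbounded, and what singles out $G$ is the side on which the $\R$- and $\S$-requirements are satisfied. Your cut/Shore-blocking argument and the cost bookkeeping $2_n(2_2(b))=2_{n+2}(b)$ are otherwise in line with the paper.
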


\begin{proof}
The following proof is adapted from \cite[Proposition 3.17]{houerou2025conservation}.

Let $\L = (M, \leq_\L)$ be an instance of $\SADS$ in $S$, let $U^0 \subseteq M$ and $U^1 \subseteq M$ be the sets of elements that have a finite amount of predecessors and successors, respectively. By assumption, $U^0 \sqcup U^1 = M$, and $U^0$ and $U^1$ are both $\Delta_2(Y)$. If either $U^0$ or $U^1$ is $M$-bounded, then there exists some $\M$-infinite $Y$-computable $\L$-monotonous sequence and we can take $G$ to be such a sequence, hence, from now on, both $U^0$ or $U^1$ will be assumed to be $M$-unbounded. 

For simplicity of notation, the elements $b \in B$ will all be assumed to be of the form $2^{2^k}$ for some $k \in M$, which will then be used to construct $\hat{B}$ such that $\hat{B} \subseteq \{\log_2(\log_2(b)) : b \in B\}$. The general case requires some care in handling integer parts and inequalities, but does not change the argument. 

We will define two sets $G^0$ or $G^1$ using an infinite $\Delta_1(Y')$ decreasing sequence of conditions and then pick $G$ to be equal to one of them. We will also define two $\Delta_1(Y')$ sets $B^i$ for $i < 2$ so that, if $G = G^i$, then $A$ is $B^i$-block $(G \oplus Y)$-immune with cost $n+2$.

\begin{definition}
A \emph{condition} is a pair $(\sigma^0, \sigma^1)$ with $\sigma^0$ an $M$-finite $\L$-ascending sequence included in $U^0$ and $\sigma^1$ an $M$-finite $\L$-descending sequence included in $U^1$. We have $(\tau^0, \tau^1) \leq (\sigma^0, \sigma^1)$ if $\tau^0 \succeq \sigma^0$ and $\tau^1 \succeq \sigma^1$.
\end{definition}

Being a condition is a $\Delta_2(Y)$-predicate, while the extension relation is $\Delta_1$. We define a weaker notion of pair which is $\Delta_1(Y)$:
\begin{definition}
A \emph{split pair} is a pair $(\sigma^0, \sigma^1)$ of strings such that $\sigma^0$ is an $M$-finite $\L$-ascending sequence, $\sigma^1$ is an $M$-finite $\L$-descending sequence, and $\max_\L \sigma^0 <_\L \min_\L \sigma^1$.
\end{definition}

As mentioned in \Cref{sec:combinatorics-ads}, the core property of split pairs $(\sigma^0, \sigma^1)$ comes from the fact that if $\sigma^i \not \subseteq U^i$ for some $i < 2$, then $\sigma^{1-i} \subseteq U^{1-i}$. In particular, if $(\sigma^0, \sigma^1)$ is a condition and $(\tau^0, \tau^1)$ is a split pair such that $\sigma^0 \preceq \tau^0$ and $\sigma^1 \preceq \tau^1$, then either $(\tau^0, \sigma^1)$, or $(\sigma^0, \tau^1)$ is a valid condition extending $(\sigma^0, \sigma^1)$. Given two split pairs $(\sigma^0, \sigma^1)$ and $(\tau^0, \tau^1)$, we write $(\tau^0, \tau^1) \leq (\sigma^0, \sigma^1)$ if $\sigma^i \preceq \tau^i$ for each~$i < 2$.

\begin{definition}
Let $(\sigma^0, \sigma^1)$ be a condition, $e, x \in M$ and $i < 2$, we write:
\begin{itemize}
    \item $(\sigma^0, \sigma^1) \Vdash_i \Phi_e^{G^i \oplus Y}(x) \downarrow$ if $\Phi_e^{\sigma^i \oplus Y}(x) \downarrow$. 
    \item $(\sigma^0, \sigma^1) \Vdash_i \Phi_e^{G^i \oplus Y}(x) \uparrow$ if for every split pair $(\tau^0, \tau^1) \leq (\sigma^0, \sigma^1)$ with $\tau^{1-i} = \sigma^{1-i}$, we have $\Phi_e^{\tau^i \oplus Y}(x)\uparrow$. 
\end{itemize}   
\end{definition}

The relation $(\sigma^0, \sigma^1) \Vdash_i \Phi_e^{G^i \oplus Y}(x) \downarrow$ is $\Delta_1(Y)$ and the relation $(\sigma^0, \sigma^1) \Vdash_i \Phi_e^{G^i \oplus Y}(x) \uparrow$ is $\Pi_1(Y)$, both uniformly in their parameters. The formulas $\Phi_e^{G^i \oplus Y}(x) \downarrow$ and $\Phi_e^{G^i \oplus Y}(x) \uparrow$ are universally $\Sigma_1(G^i \oplus Y)$ and $\Pi_1(G^i \oplus Y)$ respectively and the above definition induces a forcing relation on these classes of formulas.

We want our construction to force for some~$i < 2$ the following requirements for every~$k \in M$:
\begin{itemize}
    \item $\mathcal{R}^i_k$: $(G^i \oplus Y)' \uh_k$ is decided, that is, there exists some $\sigma' \in 2^k$ such that $(G^i \oplus Y)' \uh_k = \sigma'$.
    \item $\mathcal{S}^i_{k}$: there exists some $\ell > 2^{2^k}$ in $B$, such that, for every $x < \log_2(\log_2(\ell))$, if $\card W^{G^i \oplus Y}_x \geq 2_n(\ell)$, then, there exists some $e < \ell$ such that $\card W_e^Y \geq 2_n(\ell)$ and $W_e^Y \subseteq W_x^{G^i \oplus Y}$ (in particular, since  $A$ is $B$-block $Y$-immune with cost $n$, this implies that $W_e^Y \not \subseteq A$, hence that $W^{G^i \oplus Y}_x \not \subseteq A$).
    \item $\mathcal{T}^i_k$: $\exists x > k, x \in G^i$
\end{itemize}
For this, we will use a pairing argument, and ensure that for every~$k \in M$, the following requirements are forced: $\R^0_k \vee \R^1_k$, $\R^0_k \vee \S^1_k$, $\S^0_k \vee \R^1_k$, $\S^0_k \vee \S^1_k$, and $\T^0_k \wedge \T^1_k$. 

In what follows, $(\sigma^0, \sigma^1) \Vdash_i \R_k^i$ means that there exists some $\sigma' \in 2^k$ such that $(\sigma^0, \sigma^1) \Vdash_i \Phi_e^{G^i \oplus Y}(e) \downarrow$ for every $e < k$ such that $\sigma'(e) = 1$ and $(\sigma^0, \sigma^1) \Vdash_i \Phi_e^{G^i \oplus Y}(e) \uparrow$ for every $e < k$ such that $\sigma'(e) = 0$. Similarly $(\sigma^0, \sigma^1) \Vdash_i \S_k^i$ means that there exists some $\ell > 2^{2^k}$ in $B$ such that for every $x < \log_2(\log_2(\ell))$, either $(\sigma^0, \sigma^1)$ forces $\card W_x^{G^i \oplus Y} < 2_n(\ell)$ or forces $\card W_x^{G^i \oplus Y} \geq 2_n(\ell)$, and in that case $(\sigma^0, \sigma^1)$ also forces $W_e^Y \subseteq W_x^{G^i \oplus Y}$ for some $e < \ell$ such that $\card W_e^Y \geq 2_n(\ell)$. The requirement $\T^1_k$ is already a $\Sigma_1(G)$ formula, and the forcing relation has already been defined for such formulas.

\begin{definition}
Fix a condition $(\sigma^0, \sigma^1)$.
A \emph{$(\sigma^0, \sigma^1)$-tree} is a binary tree $T \subseteq 2^{<M}$ labelled by a family of split pairs $(\sigma^0_{\rho}, \sigma^1_{\rho})_{\rho \in T}$ such that for all $\rho \in T$: 
\begin{itemize}
    \item $\sigma^0 \preceq \sigma^0_{\rho}$ and $\sigma^1 \preceq \sigma^1_{\rho}$ ;
    \item For every $\mu \in T$ and every $i < 2$, if $\rho \cdot i \preceq \mu$ then $\sigma^i_{\rho} \preceq \sigma^i_{\mu}$ ;
\end{itemize}
     A node $\rho$ is said to be \emph{valid} if for every~$i < 2$ and every $\mu \in T$ such that $\mu \cdot i \preceq \rho$, then $\sigma^i_\mu \subseteq U^i$. %\ludovic{Est-ce que ce n'est pas plus naturel de dire A node $\mu$ is said to be \emph{valid} if for every~$\rho \cdot i \preceq \mu$, then $\sigma^i_\rho \subseteq U^i$ ? C'est plus proche de la formulation du deuxième item. Après, on peut également ajouter "Equivalently, et là, on met la définition précédente."} 
     Being valid is a $\Delta_2(Y)$ condition.
\end{definition}

\begin{definition}
    Let $T$ a $(\sigma^0,\sigma^1)$-tree 
    \begin{itemize}
        \item A node $(\sigma^0_{\rho}, \sigma^1_{\rho})$ in $T$ \emph{satisfies} a pair of $\Sigma_1(Y)$-formulas $(\varphi_0(G^0, x), \varphi_1(G^1,x))$ if $\varphi_0(\sigma^0_{\rho}, |\rho| - \card \rho)$ and $\varphi_1(\sigma^1_{\rho}, \card \rho)$ holds (where $\card \rho$ is the cardinal of $\rho$ seen as a set, to be distinguished from the length~$|\rho|$ of~$\rho$ as a string). 
        \item $T$ \emph{satisfies} $(\varphi_0(G^0, x), \varphi_1(G^1,x))$ if every node of $T$ satisfies it.
    \end{itemize}
\end{definition}

\begin{lemma}\label[lemma]{lem:sads-exists-maximal-tree}
For every pair $(\varphi_0(G^0, x), \varphi_1(G^1,x))$ of $\Sigma_1(Y)$ formulas and every condition $(\sigma^0, \sigma^1)$, there is a maximal $\Sigma_1(Y)$ $(\sigma^0, \sigma^1)$-tree~$T$ satisfying it.
\end{lemma}

\begin{proof}
    Same as \cite[Lemma 3.22]{houerou2025conservation}.
\end{proof}

The tree $T$ obtained in \cite[Lemma 3.22]{houerou2025conservation} is given by the limit of a uniformly $Y$-computable increasing sequence $(T_s)_{s \in M}$ of $M$-finite $(\sigma^0, \sigma^1)$-trees. This sequence can be uniformly $Y$-computed in the pair $(\varphi_0(G^0, x), \varphi_1(G^1,x))$ and in the condition $(\sigma^0, \sigma^1)$. Thus, there exists some computable function $e : M \to M$ such that $W^Y_{e(\langle \ulcorner\varphi_0\urcorner, \ulcorner\varphi_1\urcorner, \sigma^0, \sigma^1, \rho ,i,x\rangle)}$ searches the split pair $(\sigma^0_{\rho}, \sigma^1_{\rho})$ in position $\rho$ of the corresponding $(\sigma^0, \sigma^1)$-tree $T$, and, if such a split pair exists, outputs the elements of $W_{x}^{\sigma^i_{\rho} \oplus Y}$.

\begin{lemma}\label[lemma]{lem:sads-finite-tree-forcing}
Let $(\sigma^0, \sigma^1)$ be a condition, $(\varphi_0(G^0, x), \varphi_1(G^1,x))$ be a pair of $\Sigma_1(Y)$ formulas, and $T$ be an $M$-finite maximal $(\sigma^0, \sigma^1)$-tree satisfying $(\varphi_0(G^0, x),\allowbreak \varphi_1(G^1,x))$.
Then there is some $i < 2$, some extension $(\hat \sigma^0, \hat \sigma^1) \leq (\sigma^0, \sigma^1)$, some~$p_i\in M \cup \{-1\}$ such that the following holds:
\begin{itemize}
    \item $(\hat \sigma^0, \hat \sigma^1) \Vdash_i \varphi_i(G^i, p_i)$
    \item $(\hat \sigma^0, \hat \sigma^1) \Vdash_i \neg \varphi_i(G^i,p_i+1)$.
\end{itemize}
With the convention that $(\hat \sigma^0, \hat \sigma^1) \Vdash_i \varphi_i(G^i,-1)$ always holds.

The side $i < 2$ and the extension $(\hat \sigma^0, \hat \sigma^1)$ is $Y'$-computable from $(\sigma^0, \sigma^1)$. Furthermore, $\hat \sigma^i$ is an element of some split pair appearing in $T$, i.e., there exists some $\rho \in T$ such that $\hat \sigma^i = \sigma^i_{\rho}$.
\end{lemma}

\begin{proof}
    Same as Lemma 3.23 in the arXiv version\footnote{The published version of \cite{houerou2025conservation} contains a small flaw in the proof of conservation of SADS, fixed in the arXiv version (\url{https://arxiv.org/abs/2402.11616}) using Lemma 3.23.} of \cite{houerou2025conservation}.
\end{proof}

\begin{lemma}\label{lem:sads-forcing-eff-jump}
Let $k \in M$ and $(\sigma^0, \sigma^1)$ be a condition. Then, for every requirements $\mathcal{U}^0_k \in \{\R^0_k, \S^0_k\}$ and $\mathcal{U}^1_k \in \{\R^1_k, \S^1_k\}$, there exists some extension $(\hat \sigma^0, \hat \sigma^1) \leq (\sigma^0, \sigma^1)$ and some $i < 2$ such that $(\hat \sigma^0, \hat \sigma^1) \Vdash_i \mathcal{U}^i_k$. Such an extension is uniformly $Y'$-computable in $\U_k^0, \U_k^1$ and $(\sigma^0, \sigma^1)$.
\end{lemma}

\begin{proof}

We will define two $\Sigma_1(Y)$-formulas $\Phi_t(G,x)$ and $\Psi_t(G,x)$ as follows:

% using Kleene's recursion theorem, we can consider some $\ell \in M$ such that $\ell > h(e(\langle \ulcorner\varphi_0\urcorner, \ulcorner\varphi_1\urcorner, \sigma^0, \sigma^1, \rho ,i,x\rangle))$ for every $\varphi_0(G,x), \varphi_1(G,x) \in \{\Phi_{k,(\sigma^0, \sigma^1)}(G,x), \Psi_{k,(\sigma^0, \sigma^1)}(G,x)\}$, every $\rho \in 2^{2k}$, every $i < 2$ and every $x < k$. 

Let $\Phi_{t}(G,x)$ be the formula: $\exists e_0 < e_1 < \dots < e_{x} < t$ such that $\Phi_{e_i}^{G \oplus Y}(e_i) \downarrow$ for every $i \leq x$, and let $\Psi_t(G,x)$ be the formula: $\exists e_0 < e_1 < \dots < e_{x} < t$ such that $\card W_{e_i}^{G \oplus Y} \geq 2_{n+2}(t)$ for every $i \leq x$.

Consider the function $S : M \to M$ which maps $t$ to the biggest value taken by $e(\ulcorner\varphi_0 \urcorner, \ulcorner \varphi_1 \urcorner, \sigma^0, \sigma^1, \rho, i, x)$ for every $\varphi_0, \varphi_1 \in \{\Phi_t, \Psi_t \}$, every $\rho \in 2^{\leq 2(t-1)}$, every $i < 2$ and every $x < t$. 

Assuming a reasonable encoding of $e$ and of $\Phi_t$ and $\Psi_t$, this function grows slower than $t \mapsto 2^{2^t}$, indeed, all the possible parameters of $e$ either directly depend on $t$ (such as $\ulcorner\varphi_0 \urcorner$ and $\ulcorner \varphi_1 \urcorner$), are smaller than $t$ (such as $x$) or are in $2^{\leq2(t-1)}$ (like $\rho$), hence, in any case they can be encoded using $\alpha t$ bits for some $\alpha \in \omega$ and the same holds for the corresponding value taken by $e$. Hence, there exists some element $\ell$ in $B$ such that $\ell>2^{2^{k}}$ and such that $\ell' := \log_2(\log_2(\ell))$ satisfies $S(\ell') < 2^{2^{\ell'}} = \ell$.

For every $i < 2$, let $\varphi_i(G,x)$ be the formula $\Phi_{\ell'}(G,x)$ if $\mathcal{U}^i_k = \R^i_k$ and $\Psi_{\ell'}(G,x)$ otherwise. \\

Let $T$ be the maximal $\Sigma_1(Y)$ $(\sigma^0, \sigma^1)$-tree~$T$ satisfying $(\varphi_0(G, x), \varphi_1(G,x))$ obtained in \Cref{lem:sads-exists-maximal-tree}. Since $\Phi_{\ell'}(G,\ell')$ and $\Psi_{\ell'}(G,\ell')$ cannot hold, as there are only $\ell'$ Turing indices smaller than $\ell'$, the tree $T$ cannot have a depth bigger than $2(\ell'-1)$. Indeed, any $\rho \in T$ of length $> 2(\ell'-1)$ would make either $|\rho| - \card \rho$ or $\card \rho$ bigger than $\ell'$, contradicting the fact that $T$ satisfies $(\varphi_0(G, x), \varphi_1(G,x))$.

Therefore, $T$, as a binary tree, has less than $2^{2\ell' - 1}$ elements, and the increasing sequence $(T_s)_{s \in M}$ of computable trees approaching $T$ must have a final stage after which the approximation stabilizes and is equal to $T$, hence $T$ is $M$-finite. \\

Therefore, by \Cref{lem:sads-finite-tree-forcing}, there exists some extension $(\hat \sigma^0, \hat \sigma^1) \leq (\sigma^0, \sigma^1)$, some $i < 2$ and some $p_i \in \{-1,0, \dots,\ell'-1\}$  such that:
\begin{itemize}
    \item $(\hat \sigma^0, \hat \sigma^1) \Vdash_i \varphi_i(G^i, p_i)$
    \item $(\hat \sigma^0, \hat \sigma^1) \Vdash_i \neg \varphi_i(G^i,p_i + 1)$.
\end{itemize}

There are two cases:
\smallskip

\textbf{Case 1:} If $\mathcal{U}^i_k = \R^i_k$, then $\varphi_i(G,x) = \Phi_{\ell'}(G,x)$, and, since $(\hat \sigma^0, \hat \sigma^1) \Vdash_i \varphi_i(G^i, p_i)$, there exist indices $e_0 < \dots < e_{p_i} < \ell'$ such that $\Phi_{e_x}^{\hat \sigma^i \oplus Y}(e_x) \downarrow$ for every $x \leq p_i$. Since $(\hat \sigma^0, \hat \sigma^1) \Vdash_i \neg \varphi_i(G^i, p_i + 1)$, then, for every $y < \ell'$ not equal to one of the $e_x$, the condition $(\hat \sigma^0, \hat \sigma^1)$ forces $\Phi_{y}^{G \oplus Y}(y) \uparrow$. 
Hence, the condition $(\hat \sigma^0, \hat \sigma^1)$ forces $\R^i_k$ as $\ell' \geq k$.
\smallskip

\textbf{Case 2:} If $\mathcal{U}^i_k = \S^i_k$, then $\varphi_i(G,x) = \Psi_{\ell'}(G,x)$, and, since $(\hat \sigma^0, \hat \sigma^1) \Vdash_i \varphi_i(G^i, p_i)$, there exist indices $e_0 < \dots < e_{p_i} < \ell'$ such that $\card W_{e_x}^{\hat \sigma^i \oplus Y} \geq 2_{n+2}(\ell') = 2_n(\ell)$ for every $x \leq p_i$. Since $(\hat \sigma^0, \hat \sigma^1) \Vdash_i \neg \varphi_i(G^i, p_i + 1)$, then, for every $y < \ell'$ not equal to one of the $e_x$, the condition $(\hat \sigma^0, \hat \sigma^1)$ forces $\card W_{y}^{G^i \oplus Y} < 2_{n+2}(\ell')$.

Let $x \leq p_i$ and let $\rho$ be index of $T$ such that $\hat \sigma^i = \sigma_{\rho}^i$. Since the depth of $T$ is smaller than $2\ell'-1$, $\rho \in 2^{\leq2(\ell' - 1)}$ and, by definition of $\ell'$, $e(\ulcorner\varphi_0 \urcorner, \ulcorner \varphi_1 \urcorner, \sigma^0, \sigma^1, \rho, i, e_x) < 2^{2^{\ell'}} = \ell$.
By definition of the function $e$, $W_{e_x}^{\hat{\sigma}^i \oplus Y}$ equals $W^Y_{e(\langle \ulcorner\varphi_0\urcorner, \ulcorner\varphi_1\urcorner, \sigma^0, \sigma^1, \rho, i, e_x\rangle)}$, hence, 
$$\card W^Y_{e(\langle \ulcorner\varphi_0\urcorner, \ulcorner\varphi_1\urcorner, \sigma^0, \sigma^1, \rho ,i,e_x\rangle)} \geq 2_{n+2}(\ell') = 2_n(\ell)$$ 
and $(\hat \sigma^0, \hat \sigma^1) \Vdash_i W^Y_{e(\langle \ulcorner\varphi_0\urcorner, \ulcorner\varphi_1\urcorner, \sigma^0, \sigma^1, \rho, i, e_x\rangle)} \subseteq W_{e_x}^{G^i \oplus Y}$. Hence, $(\hat \sigma^0, \hat \sigma^1)$ forces $\mathcal{S}_k^i$. 
\end{proof}

\textbf{Construction} 
As in \cite[Proposition 3.17]{houerou2025conservation}, we shall construct two sets
$G^0$ and $G^1$, being approached by a decreasing sequence $(\sigma^0_s,\sigma^1_s)$ of conditions, as well as their jump ${G^0}'$ and ${G^1}'$ being approached by sequences $({\sigma^0_s}')$ and $({\sigma^1_s}')$. We will also approach each $B^i$ by sequences $(B^i_s)_{s \in M}$.

Let $\sigma_0^0 = \sigma_0^1 = {\sigma^0_0}' = {\sigma^1_0}' = B^0_0 = B^1_0 = \emptyset$. Assume that $\sigma_s^0, \sigma_s^1, {\sigma^0_s}', {\sigma^1_s}', B^0_s$ and $B^1_s$ are defined for some $s \in M$, and let 
$$
k_s  = 1+\max \{|\sigma_s^0|, |\sigma_s^1|, |{\sigma^0_s}'|, |{\sigma^1_s}'|, |B^0_s|, |B^1_s|\}
$$
We will satisfy one of the five requirements $\R_{k_s}^0 \vee \R_{k_s}^0$, $\S_{k_s}^0 \vee \R_{k_s}^1$, $\R_{k_s}^0 \vee \S_{k_s}^1$, $\S_{k_s}^0 \vee \S_{k_s}^1$ or $\mathcal{T}^0_{k_s} \wedge \mathcal{T}^1_{k_s}$, depending on the value of $s$ modulo $5$.   
There are two cases:
\smallskip

\textbf{Case 1: The requirement is of the form $\mathcal{U}^0_{k_s} \vee \mathcal{U}^1_{k_s}$, with $\mathcal{U}^i_{k_s} \in \{\R^i_{k_s}, \S^i_{k_s}\}$.} In that case, by \Cref{lem:sads-forcing-eff-jump}, there exists some $i < 2$ and some extension $(\hat \sigma^0, \hat \sigma^1) \leq (\sigma^0_s, \sigma^1_s)$ forcing $\mathcal{U}^i_{k_s}$, we then let $\sigma^0_{s+1} = \hat \sigma^0$ and $\sigma^1_{s+1} = \hat \sigma^1$. Then, if $\mathcal{U}^i_{k_s} = \R^i_{k_s}$, let ${\sigma^i_{s+1}}'$ be such that $(\hat \sigma^0, \hat \sigma^1) \Vdash (G^i \oplus Y)' = {\sigma^i_{s+1}}'$ and let ${\sigma^{1-i}_{s+1}}'$, $B^0_{s+1}$ and $B^1_{s+1}$ be unchanged. If $\mathcal{U}^i_{k_s} = \S^i_{k_s}$, let $B^i_{s+1}(x) = 0$ for every $x \in [|B^i_s|, \log_2(\log_2(\ell)))$ and $B^i_{s+1}(\log_2(\log_2(\ell))) = 1$, where $\ell$ is the bound obtained from the fact that $(\hat \sigma^0, \hat \sigma^1) \Vdash_i \S^i_{k_s}$ and let ${\sigma^{0}_{s+1}}'$, ${\sigma^{1}_{s+1}}'$ and $B^{1-i}_{s+1}$ be unchanged.
\smallskip

\textbf{Case 2: The requirement is of the form $\mathcal{T}^0_{k_s} \wedge \mathcal{T}^1_{k_s}$.} Since both $U^0$ and $U^1$ are $M$-unbounded, there exists some condition $(\tau^0, \tau^1) \leq (\sigma^0_s, \sigma^1_s)$ with $\card \tau^i > \card \sigma^i_s$ for every $i < 2$. Let $\sigma^0_{s+1} = \tau^0$ and $\sigma^1_{s+1} = \tau^1$ and leave ${\sigma^{0}_{s+1}}'$, ${\sigma^{1}_{s+1}}'$, $B^0_{s+1}$ and $B^1_{s+1}$ unchanged. \\

The formula $\phi(s)$ stating that this construction can be pursued for $s$ steps (There exists a sequence $(\sigma_0^0, \dots,B_0^1),\dots, (\sigma_s^0, \dots,B_s^1)$ such that...) is equivalent to a $\Sigma_2(Y)$ formula, thanks to the fact that the construction is $Y'$-computable. Since we are not necessarily in a model of $\ISig_2$, there is no guarantee that $\phi(s)$ holds for every $s \in M$, and in some cases, if will actually only hold on a proper cut $I$. Nevertheless, the sequence of value $(k_s)_{s \in I}$ obtained during the construction must be $M$-unbounded, otherwise the formula $\phi(s)$ would be equivalent to a $\Delta_2(Y)$ formula that would hold for every $s \in M$ by $\BSig_2$ (which is equivalent to $\IDelta_2$), contradicting the fact that the sequence is $M$-bounded. Since this sequence is not $M$-bounded, for every $k \in M$, the requirements $\R^0_k \vee \R^1_k$, $\R^0_k \vee \S^1_k$, $\S^0_k \vee \R^1_k$, $\S^0_k \vee \S^1_k$, and $\T^0_k \wedge \T^1_k$ are satisfied (to satisfy one of these requirements, it is sufficient to satisfy it for a value $k_s \geq k$). Thus, for every $k \in M$ either $(\R^0_k \wedge \S^0_k \wedge \T^0_k)$ or $(\R^1_k \wedge \S^1_k \wedge \T^1_k)$ is satisfied, and, there exists some side $i < 2$ such that the requirements $\R^i_k, \S^i_k$, and $\T^i_k$ are simultaneously satisfied for every $k \in M$.

Let $G = \bigcup_{s \in I} \sigma^i_s$ and $\hat{B} = \bigcup_{s \in I} B^i_s$. $G$ is $\L$-monotonous as the $\sigma^i_s$ are $\L$-monotonous and compatible, $G$ is $M$-unbounded as the requirements $\T^i_k$ are satisfied for every $k \in M$. By construction, we get that $(G \oplus Y)' = \bigcup_{s \in I} {\sigma^{i}_s}'$, hence $Y' \geq (G \oplus Y)'$ as the sequence $({\sigma^{i}_s}')_{i \in I}$ is $Y'$-computable. Finally, $A$ is $\hat{B}$-block $(G \oplus Y)$-immune with cost $n + 2$. Indeed, every $b \in \hat{B}$ was added because some requirement $\S^i_{k_s}$ was forced (which happened $M$-unboundedly many times, hence $B$ is $M$-unbounded), hence there exists some $\ell \in B$ such that $b = \log_2(\log_2(\ell))$ and for every $x < b$, if $\card W_x^{G \oplus Y} \geq 2_n(\ell) = 2_{n+2}(b)$ then $W_e^Y \subseteq W^{G \oplus Y}_x$ and $\card W_e^Y \geq 2_n(\ell)$ for some $e < \ell$. This implies that $W^Y_e \not \subseteq A$ and therefore $W^{G \oplus Y}_x \not \subseteq A$ by the $B$-block $Y$-immunity with cost $n$ of $A$.

\end{proof}

In the proof of \Cref{prop:sads-eff}, the construction of the sets $G$ and $\hat{B}$ does not depends on the set $A$. Therefore, for any other set $\tilde{A} \subseteq M$ that is $B$-block $Y$-immune with cost $n$, we also get that $\tilde{A}$ is $\hat{B}$-block $(G \oplus Y)$-immune with cost $n+2$.

\subsection{Combinatorics of~$\COH$}

In order to prove the counterpart of \Cref{prop:sads-eff} for $\COH$, let us focus on the combinatorics of~$\COH$.
As mentioned, there is a correspondence between $\COH$ and $\WKL$ for $\Delta^0_2$-functionals, which can be formalized over~$\RCA_0+ \BSig_2$. It is important to elaborate on this correspondence to get a better grasp on the specificities of the $\COH$ step.

Given a uniformly computable sequence of sets $\vec{R} = R_0, R_1, \dots$, the construction of an $\vec{R}$-cohesive set can be understood as the interleaving of two kinds of operations: the \emph{growing} operation which consists of adding new elements to the cohesive set, and the \emph{cohesiveness} operation which, given a set~$R_n$, decides whether the remainder of the construction should commit to adding only elements from~$R_n$ or from $\overline{R}_n$.
The possible sequences of cohesiveness decisions can be represented as the infinite paths through a $\Sigma_1(\vec{R})$ binary tree as follows:

\begin{definition}
    For every instance $\vec{R} = R_0, R_1, \dots$ of $\COH$ and every $\rho \in 2^{<M}$, we write $\vec{R}_\rho := \bigcap_{\rho(i) = 0} \overline{R}_i \cap \bigcap_{\rho(i) = 1} R_i$ and let $T(\vec{R})$ to be the $\Sigma_1(\vec{R})$-tree $\{ \rho \in 2^{<M} : \exists x (x > |\rho| \wedge x \in \vec{R}_\rho) \}$.
\end{definition}

Note that a node $\rho$ is extensible in $T(\vec{R})$ iff $\vec{R}_\rho$ is infinite. Any infinite $\vec{R}$-cohesive set~$C$ gives a $\Delta_2(C)$-definition of a path~$P$ through~$T(\vec{R})$ by letting~$x \in P$ iff $C \subseteq^* R_x$. Conversely, Jockusch and Stephan~\cite{jockusch1993cohesive} showed that every degree whose jump computes a path through~$T(\vec{R})$ computes an infinite $\vec{R}$-cohesive set.

It follows that there exists a natural notion of forcing for producing $\vec{R}$-cohesive sets, using a restriction of computable Mathias forcing to reservoirs which are boolean combinations of the sets of the instance. This notion of forcing is parameterized by a fixed path~$P \in [T(\vec{R})]$.

\begin{definition}
A \emph{condition} is a pair of binary strings $(\sigma, \rho)$ such that $\rho \prec P$.    
\end{definition}

One can think of a condition $(\sigma, \rho)$ as the computable Mathias condition $(\sigma, \vec{R}_\rho \setminus \{0, \dots, |\sigma| \})$. The notion of extension is defined accordingly: a condition $(\tau, \mu)$ extends $(\sigma, \rho)$ if $\sigma \preceq \tau$, $\rho \preceq \mu$, and $\tau \setminus \sigma \subseteq \vec{R}_\rho$.

By many means, the situation for~$\COH$ is much simpler than for~$\SADS$: the notion of forcing for~$\COH$ admits a first-jump control very similar to Cohen forcing or computable Mathias forcing. In particular, it is non-disjunctive.
There is however a new difficulty: the set of conditions is $P$-computable, and therefore the overall construction will be $P \oplus \emptyset'$-computable instead of $\emptyset'$-computable. One can easily prove that the use of~$P$ is necessary. Indeed, if a computable instance~$\vec{R}$ of~$\COH$ admits a solution of low degree~$\mathbf{d}$, then $\mathbf{d}'$ computes a path through $T(\vec{R})$, hence the tree admits a $\emptyset'$-computable path, and therefore there is a computable $\vec{R}$-cohesive set.

Working in a model~$\M = (M, S) \models \RCA_0 + \BSig_2 + \neg \ISig_2$ topped by a set~$Y$, the statement \qt{the construction can be pursued for~$s$ steps} will be $\Sigma_1(P \oplus Y')$ instead of~$\Sigma_2(Y)$. The corresponding cut~$I$ will be $\Sigma_1(P \oplus Y')$, and if~$P$ is arbitrary, the sequence of value $(k_s)_{s \in I}$ will not necessarily be $M$-unbounded, as this would yield a $\Delta_1(P \oplus Y')$-definition of~$I$, which is not a contradiction. One must therefore ensure that $\M[P \oplus Y'] \models \IDelta_1$ to make the construction work. Thankfully, this is the case if the path~$P$ is chosen with some care.

Let $\RCA_0^*$ be the theory $\Psf^-$ together with the $\Delta^0_1$-induction scheme, the $\Delta^0_1$-comprehension scheme, and the statement of the totality of the exponential ($\exp$). In particular, $\RCA_0^* \vdash \BSig_1$, and if $\M = (M, S)$ is a model of~$\RCA_0 + \BSig_2$, then $\M' = (M, \Definable{\Delta_2}{\M})$ is a model of~$\RCA_0^*$. The theory~$\RCA_0^*$ was introduced by Simpson and Smith~\cite{simpson1986factorization}, who proved the following theorem:

\begin{theorem}[Simpson–Smith~\cite{simpson1986factorization}]\label[theorem]{thm:wkl-rca0s}
Every countable model of~$\RCA_0^*$ can be $\omega$-extended into a model of~$\RCA_0^* + \WKL$.
\end{theorem}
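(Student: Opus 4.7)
The plan is to mimic the Harrington-style $\omega$-extension proof that $\RCA_0 + \WKL$ is $\Pi^1_1$-conservative over $\RCA_0$, but to carry it out over the weaker base theory $\RCA_0^*$. The key technical ingredient will be a low-basis theorem formalized in this weak setting; the totality of exponentiation provided by $\RCA_0^*$ is essential here, since it allows us to manipulate codes for finite binary strings and finitely many levels of a tree as integers, making the requisite $\Pi^0_1$-emptiness tests tractable despite having only $\Delta^0_1$-induction available.

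The first main step is to establish, over $\RCA_0^*$, a low-basis statement: for every set $Y$ and every infinite $\Delta_1(Y)$-tree $T \subseteq 2^{<M}$, there is a path $P \in [T]$ with $(P \oplus Y)' \leq_T Y'$. The construction follows Jockusch--Soare: produce a nested sequence of non-empty $\Pi^0_1(Y)$ subtrees $T = T_0 \supseteq T_1 \supseteq \cdots$, where at stage $e$ one restricts to the subtree of nodes that force $\Phi^{G \oplus Y}_e(e) \uparrow$ if this restriction is still infinite, and leaves $T_e$ unchanged otherwise. Infiniteness of such $\Pi^0_1(Y)$ subtrees is $\Pi^0_2(Y)$ in general, but because we can bound the emptiness search at each finite level using $\exp$, the decision can be made from $Y'$, and the resulting path is captured by the intersection of the $T_e$.

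Next, given a countable $\M = (M, S) \models \RCA_0^*$ topped by some $Y_0$, I would enumerate all $\Delta_1(Y_n)$-infinite binary trees appearing in the successive toppings. At stage $n$, apply the low-basis theorem to the next tree to obtain a path $P_n$ with $(P_n \oplus Y_n)' \leq_T Y_n'$, and let $\M_{n+1}$ be the $\omega$-extension topped by $Y_{n+1} = P_n \oplus Y_n$. A standard bookkeeping ensures that every $\Delta_1$-infinite tree in the union $\Nc = (M, \bigcup_n S_n)$ appears at some finite stage, whence $\Nc \models \WKL$.

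The main obstacle is verifying that each $\M_{n+1}$ still satisfies $\RCA_0^*$, i.e.\ that $\Delta^0_1(P_n \oplus Y_n)$-comprehension and $\Delta^0_1(P_n \oplus Y_n)$-induction are preserved. The first-order part $M$ is unchanged, so $\exp$ and first-order induction carry over for free; the point is to handle the new parameter $P_n$. Here the low property is decisive: any $\Delta_1(P_n \oplus Y_n)$-predicate is $\Delta_2(Y_n)$, and since $\M_n \models \RCA_0^* + \BSig_1$, the relevant induction axioms reduce to instances already available at stage $n$. Packaging this reduction cleanly, and checking that the low-basis forcing can be entirely coded via $\exp$ without invoking unavailable induction, is the most delicate part of the argument.
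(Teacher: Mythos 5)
The paper does not prove this theorem at all --- it is quoted from Simpson--Smith~\cite{simpson1986factorization} --- so your proposal must stand on its own, and its crux does not hold up. The iteration scheme (handle the countably many trees one at a time, preserve the base theory at each step, take the union) is indeed the standard model-theoretic architecture. The genuine gap is the preservation step. You argue that because $P_n$ is low over $Y_n$, every $\Delta_1(P_n \oplus Y_n)$ predicate is $\Delta_2(Y_n)$, and that therefore the needed induction ``reduces to instances already available at stage $n$''. But $\RCA_0^*$ provides only $\Delta^0_1$-induction and $\BSig_1$; it has no $\Delta_2(Y_n)$-induction (not even $\ISig_1$), so the instances you want to reduce to are simply not present in $\M_n$. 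Lowness of the added path does not by itself imply that $\Delta^0_1(P_n\oplus Y_n)$-induction (equivalently, over $\exp$, $\BSig_1(P_n\oplus Y_n)$) holds in the extension: one must rule out that the new parameter creates a definable proper cut, and that is exactly the hard part of Harrington-type conservation arguments. In the analogous $\RCA_0/\WKL_0$ case this is done not via the low basis theorem but by a forcing/genericity analysis of the tree forcing showing directly that the generic path adds no new $\Sigma_1$-definable cuts; Simpson--Smith's proof for $\RCA_0^*$ likewise verifies the induction and bounding axioms for the generic path by a direct forcing argument. (Compare also the present paper's own Propositions~\ref{prop:sads-eff} and~\ref{prop:coh-eff}, where first-jump control is made to preserve the base theory only by exploiting $\BSig_2$ of the ground model --- a resource you do not have here.)

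A secondary, more repairable, issue: the Jockusch--Soare recursion is indexed by $e \in M$ with $M$ nonstandard, so it cannot be run ``along $M$'' as written; you must instead carry it out externally in order type $\omega$ over the countably many requirements, interleaving steps that push the length of the committed node cofinally through $M$. Also, the decision ``is the restricted subtree $M$-unbounded'' is not $\Pi^0_2(Y)$-then-magically-$Y'$-decidable as you suggest; with $\BSig_1$ and $\exp$ it is in fact equivalent to a $\Pi_1(Y)$ statement, which is what makes it $Y'$-decidable --- but this bookkeeping does not rescue the main point above, since the obstacle is not computing the path but proving that the extension still satisfies $\RCA_0^*$.
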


This is exactly what we need to prove the existence of a path~$P$ such that $\M[P \oplus Y'] \models \IDelta_1$.

\begin{lemma}\label[lemma]{lem:bsig2-decision-path}
    Let $\M = (M,S) \models \RCA_0 + \BSig_2$ be a countable model topped by some set $Y$. For every instance $\vec{R}$ of $\COH$ in $S$, there exists some path $P \in [T(\vec{R})]$ such that $\M[P \oplus Y'] \models \RCA_0^*$.
\end{lemma}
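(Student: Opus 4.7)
The plan is to reduce the lemma to the Simpson--Smith theorem (\Cref{thm:wkl-rca0s}), applied to the auxiliary model $\M' = (M, \Definable{\Delta_2}{\M})$, which is a countable model of $\RCA_0^*$ as noted above. Since $\vec{R} \in S$ is $\Delta_1(Y)$-definable, the tree $T(\vec{R})$ is $\Sigma_1(Y)$-definable, hence lies in $\Definable{\Delta_2}{\M}$ and is thus a tree in the second-order part of~$\M'$.

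Before applying $\WKL$, I would verify that $\M' \models$ \qt{$T(\vec R)$ is an $M$-infinite binary tree}. For each $n \in M$, the family $\{\vec R_\rho : \rho \in 2^n\}$ is an $M$-finite partition of $M$ into $2^n$ pieces, so by the pigeonhole principle (available in $\M$ from $\BSig_2$), at least one $\vec R_\rho$ is $M$-unbounded, and such a $\rho$ belongs to $T(\vec R)$. Thus $T(\vec R)$ has nodes of every length in $M$.

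Next, I would apply \Cref{thm:wkl-rca0s} to the countable model $\M'$ to obtain a countable $\omega$-extension $\Nc = (M, S') \models \RCA_0^* + \WKL$. Since $T(\vec R)$ remains an infinite binary tree in $\Nc$, $\WKL$ yields a path $P \in [T(\vec R)]$ with $P \in S'$. To conclude, it remains to argue that $\M[P \oplus Y'] \models \RCA_0^*$. Since $Y' \in \Definable{\Delta_2}{\M} \subseteq S'$ and $P \in S'$, we have $P \oplus Y' \in S'$; by $\Delta^0_1$-comprehension in $\Nc$, every $\Delta_1(P \oplus Y')$-definable set lies in $S'$. Hence the second-order part of $\M[P \oplus Y']$ embeds into $S'$, and the two models share the first-order part $M$. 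Each instance of $\Delta^0_1$-comprehension or $\Delta^0_1$-induction in $\M[P \oplus Y']$ is then an instance of the same scheme in~$\Nc$, hence satisfied, and the totality of $\exp$ is inherited from $\M \models \RCA_0$. This yields $\M[P \oplus Y'] \models \RCA_0^*$.

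The only delicate point is the second paragraph: one must confirm that $T(\vec R)$ is not merely $\omega$-infinite but $M$-infinite as seen inside $\M'$, as otherwise $\WKL$ in $\Nc$ would not supply a genuine path. Once $\BSig_2$ in $\M$ is leveraged via the pigeonhole principle, the remainder of the argument is a routine conservation/submodel verification.
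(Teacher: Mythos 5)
Your proposal is correct and follows essentially the same route as the paper: pass to the auxiliary model of $\Delta_2(\M)$-definable sets (which is exactly $\M[Y']$ since $\M$ is topped by $Y$), check that $T(\vec R)$ is an $M$-unbounded tree there, and apply the Simpson--Smith theorem to extract the path $P$. The only differences are cosmetic: you verify unboundedness via the $\BSig_2$-pigeonhole principle rather than $\Delta_1(Y')$-induction, and you spell out the final verification that $\M[P \oplus Y'] \models \RCA_0^*$, which the paper leaves implicit.
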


\begin{proof}
Since $\M \models \RCA_0 + \BSig_2$, $\M[Y'] \models \RCA_0^*$.
The tree $T(\vec{R})$ is $\Delta_1(Y')$ and is therefore contained in $\M[Y']$. Furthermore, by $\Delta_1(Y')$-induction (which holds since $\M[Y'] \models \RCA_0^*$), one can prove that for every~$k \in M$, there is some $\rho$ of length~$k$ such that $\vec{R}_\rho$ is $M$-unbounded. It follows that $T(\vec{R})$ is $M$-unbounded and is therefore an instance of $\WKL$ in $\M[Y']$.
Since $\M[Y'] \models \RCA_0^*$, by \Cref{thm:wkl-rca0s}, there exists some path $P$ in $T(\vec{R})$ such that $\M[P \oplus Y'] \models \RCA_0^*$.
\end{proof}

\subsection{The $\COH$ step}

We now prove the existence of $\omega$-extensions which add solutions to instances of~$\COH$ while preserving the fact that the instance of~$\BRT^2_2$ has no solution.

\begin{proposition}\label[proposition]{prop:coh-eff}
    Let $\M = (M,S) \models \RCA_0 + \BSig_2$ be a countable model topped by some set $Y$. Let $B$ be an $M$-unbounded $\Delta_2(Y)$ set and fix $n \in \omega$. Then, for every set $A \subseteq M$ that is $B$-block $Y$-immune with cost $n$, every instance $\vec{R}$ of $\COH$ in $S$ and every path $P \in [T(\vec{R})]$ such that $\M[P \oplus Y'] \models \RCA_0^*$, there exists some $M$-unbounded set $G \subseteq M$ and some $M$-unbounded $P \oplus Y'$-computable set $\hat{B}$ such that:
    
    \begin{itemize}
        \item $G$ is $\vec{R}$-cohesive 
        \item $P \oplus Y' \equiv_T (G \oplus Y)'$ and in particular $\M[G] \models \RCA_0 + \BSig_2$
        \item $A$ is $\hat{B}$-block $(G \oplus Y)$-immune with cost $n+2$.
    \end{itemize}
\end{proposition}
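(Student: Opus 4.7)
The plan is to mirror the effective-forcing argument used for Proposition \ref{prop:sads-eff}, but with the much simpler non-disjunctive notion of forcing for $\COH$ parameterized by the fixed path $P$. A \emph{condition} is a pair $(\sigma,\rho)$ with $\rho\prec P$, representing the computable Mathias condition $(\sigma,\vec R_\rho\setminus\{0,\ldots,|\sigma|\})$, and extension is defined by $\sigma\preceq\tau$, $\rho\preceq\mu$, $\tau\setminus\sigma\subseteq\vec R_\rho$. Because $\rho\prec P$ is decidable from $P$ and extensibility on $T(\vec R)$ is $\Delta_1(P)$, both the conditions and the forcing relation for $\Sigma_1(Y)$-formulas are $\Delta_1(P\oplus Y')$. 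The whole construction will thus be $(P\oplus Y')$-computable, which is the COH analogue of the $Y'$-computability of the SADS construction.

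The requirements for $k\in M$ are: $\R_k$ (decide $(G\oplus Y)'{\uh}k$), $\S_k$ (find $\ell\in B$ with $\ell>2^{2^k}$ such that for every $x<\log_2(\log_2(\ell))$, either $|W^{G\oplus Y}_x|<2_n(\ell)$ is forced, or some $W^Y_e$ with $e<\ell$ and $|W^Y_e|\geq 2_n(\ell)$ is forced to be contained in $W^{G\oplus Y}_x$), and $\T_k$ ($\exists x>k$ in $G$). The key density lemma is the analogue of Lemma \ref{lem:sads-forcing-eff-jump}: for every condition $(\sigma,\rho)$ and every $k\in M$, any choice of $\U_k\in\{\R_k,\S_k\}$ can be forced by a $(P\oplus Y')$-computable extension. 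For $\R_k$ this is a standard Cohen-style density, since $\vec R_\rho$ is $M$-unbounded below $P$. For $\S_k$ we again introduce the $\Sigma_1(Y)$-formula $\Psi_t(G,x):=\exists e_0<\cdots<e_{x-1}<t\;\forall i<x\;|W^{G\oplus Y}_{e_i}|\geq 2_{n+2}(t)$, observe that the index-growth of the witnessing reduction is polynomial in $t$, and pick $\ell\in B$ with $\ell>2^{2^k}$ such that $\ell':=\log_2(\log_2(\ell))$ dominates the relevant bound. Taking the largest $p\leq\ell'$ for which $\Psi_{\ell'}(G,p)$ can be forced yields an extension that, for each witnessing index $e_x$, forces $W^Y_{e(\cdots,e_x)}\subseteq W^{G\oplus Y}_{e_x}$ with $|W^Y_{e(\cdots,e_x)}|\geq 2_n(\ell)$; the $B$-block $Y$-immunity of $A$ with cost $n$ then blocks $W^{G\oplus Y}_{e_x}$ from being contained in $A$, and we add $\log_2(\log_2(\ell))$ to $\hat B$. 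The single-sidedness of the forcing removes the need for trees of split pairs, so this is significantly simpler than the $\SADS$ argument.

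The construction iterates the density lemma, with $k_s=1+\max\{|\sigma_s|,|\rho_s|,|\sigma'_s|,|B_s|\}$ and cycling $\R_{k_s}, \S_{k_s}, \T_{k_s}$ according to $s\bmod 3$. The statement \emph{``the construction survives $s$ steps''} is $\Sigma_1(P\oplus Y')$; if it runs only on a proper cut $I$, then the sequence $(k_s)_{s\in I}$ must be $M$-unbounded, since otherwise the cut $I$ would be $\Delta_1(P\oplus Y')$-definable, contradicting $\IDelta_1$ in $\M[P\oplus Y']$, which holds because $\M[P\oplus Y']\models\RCA_0^*$ by assumption. Hence every requirement for every $k\in M$ is satisfied. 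Setting $G=\bigcup_{s\in I}\sigma_s$ and $\hat B=\bigcup_{s\in I}B_s$, the $\vec R$-cohesiveness of $G$ is built in through $\rho_s\prec P$, and the block-immunity of $A$ relative to $G\oplus Y$ with cost $n+2$ transfers exactly as in the proof of Proposition \ref{prop:sads-eff}. For the Turing equivalence $(G\oplus Y)'\equiv_T P\oplus Y'$: the $\leq_T$ direction follows from the $(P\oplus Y')$-computability of the sequence $(\sigma'_s)_{s\in I}$, while $P(n)=1 \iff G\subseteq^* R_n$ is a $\Sigma^0_2(G\oplus Y)$-question uniformly in $n$ by $\vec R$-cohesiveness of $G$, and $Y'\leq_T (G\oplus Y)'$ is immediate.

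The main obstacle is the same one that motivated \Cref{lem:bsig2-decision-path}: in models of $\BSig_2+\neg\ISig_2$ the cut $I$ on which an effective construction runs can be a proper $\Sigma_1$-cut, and guaranteeing that $(k_s)_{s\in I}$ is $M$-unbounded requires enough induction in the oracle. The clever choice of $P$ via $\WKL$ over $\RCA_0^*$ is exactly what upgrades $\IDelta_1$ to hold in $\M[P\oplus Y']$, and this is the single non-routine ingredient; everything else is a direct $\COH$-style adaptation of the $\SADS$ proof.
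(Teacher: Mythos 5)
Your overall architecture coincides with the paper's proof: the same Mathias-style conditions $(\sigma,\rho)$ with $\rho \prec P$, the same requirements $\R_k$, $\S_k$, $\T_k$, the same exponential-gap trick for $\S_k$ with a bound $\ell \in B$ and $\ell' = \log_2(\log_2(\ell))$, and the same Shore-blocking/cut argument, where $\IDelta_1$ in $\M[P\oplus Y']$ (available because $\M[P\oplus Y'] \models \RCA_0^*$) rules out an $M$-bounded sequence $(k_s)_{s \in I}$. Your variant of the $\S_k$ density lemma (maximize the cardinality $p \le \ell'$ of a forceable witness set rather than taking an inclusion-maximal $F$ via $\ISig_1$ as the paper does) works by the same monotonicity argument, though note that the auxiliary c.e.\ indices must be parameterized by the entire witnessing set of indices, as in the paper's $e(\langle F,x,t\rangle)$, so that all the searches settle on the same compatible $\tau$; their growth is then exponential in $t$ rather than polynomial, which still fits under $2^{2^{\ell'}}$.

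There is, however, one concrete gap. You state $\T_k$ as merely ``$\exists x > k,\ x \in G$'' and claim that ``the $\vec R$-cohesiveness of $G$ is built in through $\rho_s \prec P$.'' It is not: requiring $\rho \prec P$ in conditions only guarantees that the reservoirs $\vec R_{\rho_s}$ are $M$-unbounded; nothing in your requirements ever extends the $\rho$-component, so the construction as written could keep $\rho_s = \emptyset$ forever, in which case $G$ need not be $\vec R$-cohesive at all. The paper's $\T_k$ explicitly includes the clause $G \subseteq^* \vec R_{P \uh k}$, forced by moving to $\rho' = P \uh k$ (\Cref{lem:coh-eff-forcing-size}), and this clause is doing real work in two places: it yields cohesiveness, and it is what makes $P \leq_T (G \oplus Y)'$ true — your computation ``$P(n)=1 \iff G \subseteq^* R_n$'' recovers $P$ only because the cohesive side chosen by $G$ agrees with $P$, which is exactly the content of the omitted clause (mere cohesiveness would only let $(G\oplus Y)'$ compute the path determined by $G$, which a priori could differ from $P$, breaking $P \oplus Y' \equiv_T (G\oplus Y)'$). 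The fix is routine — add the clause to $\T_k$ and extend $\rho$ cofinally along $P$ — but as stated the proposal does not establish the first two bullet points of the proposition.
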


\begin{proof}
We will define the set~$G$ using an infinite $\Delta_1(P \oplus Y')$ decreasing sequence of Mathias-like conditions. As in \Cref{prop:sads-eff}, we will assume that all the elements of $B$ are of the form $2^{2^k}$ for some $k \in M$.

\begin{definition}
A \emph{condition} is a pair of $M$-finite binary strings $(\sigma,\rho)$ such that $\rho \prec P$.
We have $(\sigma_2,\rho_2) \leq (\sigma_1,\rho_1)$ if $\sigma_2 \succeq \sigma_1$, $\rho_2 \succeq \rho_1$, and $\sigma_2 - \sigma_1 \subseteq \vec{R}_{\rho_1}$ (where $\sigma_2 - \sigma_1$ is the difference between the sets corresponding to the two sequences).
\end{definition}

As mentioned, one can think of a condition $(\sigma, \rho)$ as a Mathias condition $(\sigma, \vec{R}_\rho)$. Requiring that $\rho \prec P$ ensures that $\vec{R}_\rho$ is $\M$-infinite. When increasing the length of~$\rho$, we make progress in cohesiveness.
Note that being a condition is a $\Delta_1(P)$ predicate, as it simply requires checking that $\rho \prec P$, while the extension relation is $\Delta_1(Y)$. 

A sequence $\tau$ is said to be \emph{compatible} with a condition $(\sigma,\rho)$ if $\sigma \preceq \tau$ and  $\tau - \sigma \subseteq \vec{R}_{\rho}$.

\begin{definition}
Let~$(\sigma, \rho)$ be a condition, and $e,x \in M$, we write:
\begin{enumerate}
    \item[1.] $(\sigma, \rho) \Vdash \Phi^{G \oplus Y}_e(x)\downarrow$ if $\Phi^{\sigma \oplus Y}_e(x)[t]\downarrow$
    \item[2.] $(\sigma, \rho) \Vdash \Phi^{G \oplus Y}_e(x)\uparrow$ if for every~$\tau$ compatible with~$(\sigma, \rho)$, $\Phi^{\tau \oplus Y}_e(x)\uparrow$.
\end{enumerate}
\end{definition}

Note that the relations $(\sigma, \rho) \Vdash \Phi^{G \oplus Y}_e(x)\downarrow$
and $(\sigma, \rho) \Vdash \Phi^{G \oplus Y}_e(x)\uparrow$ are $\Delta_0(Y)$ and $\Pi_1(Y)$, respectively. \\

To ensure that $P \oplus Y' \geq_T (G \oplus Y)'$ and that $A$ is $\hat{B}$-block $(G \oplus Y)$-immune with cost $n+2$, we will also construct $(G \oplus Y)'$ and $\hat{B}$ in parallel. 

For this, we will need to satisfy three kind of requirements for every~$k \in M$:
\begin{itemize}
    \item $\mathcal{R}_k$: there exists some $\sigma' \in 2^k$ such that $(G \oplus Y)' \uh_k = \sigma'$.
    \item $\mathcal{S}_{k}$: there exists some $\ell > 2^{2^k}$ in $B$, such that, for every $x < \log_2(\log_2(\ell))$, if $\card W^{G \oplus Y}_x \geq 2_n(\ell)$, then there is some $e < \ell$ such that $\card W^Y_e \geq 2_n(\ell)$ and $W^Y_e \subseteq W_x^{G \oplus Y}$.
    \item $\mathcal{T}_k$: $\exists x > k, x \in G$ and $G \subseteq^* \vec{R}_{P \uh k}$
\end{itemize}

The notations $(\sigma, \rho) \Vdash \mathcal{R}_k$ and $(\sigma, \rho) \Vdash \mathcal{S}_k$ are defined similarly as in \Cref{prop:sads-eff}.

\begin{lemma}\label[lemma]{lem:coh-eff-forcing-jump}
    Let $(\sigma, \rho)$ be a condition. For every~$k \in M$, there exists some $M$-finite $\sigma' \in 2^k$ and an extension $(\tau, \rho) \leq (\sigma, \rho)$ such that $(\tau, \rho) \Vdash (G \oplus Y)' \uh_k = \sigma'$.
\end{lemma}

\begin{proof}
Same as \cite[Lemma 2.11]{houerou2025conservation}.
\end{proof}

\begin{lemma}\label[lemma]{lem:coh-eff-forcing-eff}
    For every condition $(\sigma, \rho)$ and every~$k \in M$, there exists some extension $(\tau, \rho) \leq (\sigma, \rho)$ such that $(\tau, \rho) \Vdash \mathcal{S}_{k}$.
\end{lemma}

\begin{proof}
For every finite set $F \subset M$, every $x \in F$ and every $t \in M$, let $W_{e(\langle F, x,t \rangle)}^Y$ search the smallest $\tau$ compatible with $(\sigma, \rho)$ such that $\card W^{\tau \oplus Y}_{x'} \geq 2_{n+2}(t)$ for every $x' \in F$ and output the elements of $W^{\tau \oplus Y}_{x}$. 
%the smallest $\tau_1$ compatible with $(\sigma, \rho)$ and such that $|W^{\tau_1 \oplus Y}_{x'}| \geq 1$ for every $x' \in F$, then, if such a $\tau_1$ is found, output the first element of $W^{\tau_1 \oplus Y}_{x}$ and search the smallest $\tau_2 \succeq \tau_1$ compatible with $(\sigma, \rho)$ and such that $|W^{\tau_2 \oplus Y}_{x'}| \geq 2$ for every $x' \in F$, then, if such a $\tau_2$ is found, output the second element of $W^{\tau_2 \oplus Y}_{x}$ and search the smallest $\tau_3 \succeq \tau_2$... 

Let $S : M \to M$ be the function which send $t$ to the biggest value taken by $e(\langle F,x,t\rangle)$ for every $F \subseteq t$ and $x \in t$. Assuming a reasonable encoding of $e$, this function grows slower than $t \mapsto 2^{2^t}$, indeed, every $F \subseteq t$ and every $x \in t$ can be encoded using less than $\alpha t$ bits for some $\alpha \in \omega$ and the same holds for the corresponding value taken by $e$.

Hence, there exists some $\ell \in B$ such that $\ell > 2^{2^k}$ and such that $\ell' := \log_2(\log_2(\ell))$ satisfies $S(\ell') < 2^{2^{\ell'}} = \ell$.

Let $\mathcal{F} = \{ F \subseteq \ell' : (\forall x \in F) \card W_{e(\langle F, x, \ell' \rangle)}^Y \geq 2_{n+2}(\ell') \}$, the set $\mathcal{F}$ is $\Sigma_1(Y)$, non empty (contains $\emptyset$) and bounded, hence, by $\ISig_1$ we can pick some $F \in \mathcal{F}$ maximal for the inclusion. 

Since $F \in \mathcal{F}$, there exists some $\tau$ compatible with $(\sigma, \rho)$ such that $\card W^{\tau \oplus Y}_{x} \geq 2_{n+2}(\ell')  = 2_n(\ell)$ for every $x \in F$. The condition $(\tau, \rho)$ thus forces $\card W^{G \oplus Y}_{x} \geq 2_n(\ell)$ for every $x \in F$ and, by the maximality assumption, forces $\card W^{G \oplus Y}_{x} < 2_n(\ell)$ for every $x \in \{0, \dots, \ell'-1\} \setminus F$, as otherwise $F \cup \{x\}$ would be in $\mathcal{F}$. Moreover, for every $x \in F$, we have $\card W_{e(\langle F, x,\ell' \rangle)}^Y \geq 2_n(\ell)$ and $W_{e(\langle F, x,\ell' \rangle)}^Y = W^{\tau \oplus Y}_x \subseteq W^{G \oplus Y}_{x}$.

Thus, $(\tau, \rho) \Vdash \S_k$.
\end{proof}

\begin{lemma}\label[lemma]{lem:coh-eff-forcing-size}
    For every condition $(\sigma, \rho)$ and every~$k \in M$, there exists some extension $(\tau, \rho') \leq (\sigma, \rho)$ such that $(\tau, \rho') \Vdash \mathcal{T}_{k}$.
\end{lemma}

\begin{proof}
Since $\rho \prec P$, the set $\vec{R}_{\rho}$ is $\M$-infinite and there exists some $\tau$ compatible with $(\sigma, \rho)$ such that $|\tau| \geq k$, making $(\tau, \rho) \Vdash \exists x \in G, x \geq k$. Then taking $\rho' = P \uh k$ ensures that every $G$ in the cone generated by $(\tau, \rho')$ will be such that $G \setminus \tau \subseteq \vec{R}_{P \uh k}$, hence that $(\tau, \rho') \Vdash \T_k$.
\end{proof}

\textbf{Construction:} The construction will build the set $G$ using a $P \oplus Y'$-computable decreasing sequence $(\sigma_s, \rho_s)$ of conditions. In parallel, the sets $(G \oplus Y)'$ and $\hat{B}$ will be approached by sequences $(\sigma_s')$ and $(B_s)$ of finite chains.

Let $\sigma_0 = \rho_0 = \sigma_0' = B_0 = \emptyset$. Assume that $\sigma_s, \rho_s, \sigma_s'$ and $B_s$ have been defined for some $s \in M$ and let $$k_s = 1 + \max \{|\sigma_s|, |\rho_s|, |\sigma_s'|, |B_s|\}$$

Using \Cref{lem:coh-eff-forcing-jump}, \Cref{lem:coh-eff-forcing-eff} and \Cref{lem:coh-eff-forcing-size}, there exists some extension $(\tau, \rho') \leq (\sigma_s, \rho_s)$ forcing $\R_{k_s}, \S_{k_s}$, and $\T_{k_s}$. Furthermore, such an extension can be found $(P \oplus Y')$-effectively. We then let $\sigma_{s+1} = \tau$, $\rho_{s+1} = \rho'$, $\sigma'_{s+1} = \sigma'$ where $\sigma'$ is the sequence obtained from the fact that $\R_{k_s}$ is forced and let $B_{s+1}(x) = 0$ for every $x \in [|B_s|, \log_2(\log_2(\ell)))$ and $B_{s+1}(\log_2(\log_2(\ell))) = 1$ where $\ell$ is the bound obtained from the fact that $\S_{k_s}$ is forced. We then proceed to the next stage of the construction. \\

The formula $\phi(s)$ stating that this construction can be pursued for $s$ steps (There exists a sequence $(\sigma_0, \rho_0, \sigma'_0,B_0),\dots, (\sigma_s, \rho_s, \sigma_s',B_s)$ such that...) is equivalent to a $\Sigma_1(P \oplus Y')$ formula, thanks to the fact that the construction is $P \oplus Y'$-computable. Since we are not necessarily in a model of $\ISig_1(P \oplus Y')$, there is no guarantee that $\phi(s)$ holds for every $s \in M$, and in some cases, if will actually only hold on a proper cut $I$. Nevertheless, the sequence of value $(k_s)_{s \in I}$ obtained during the construction must be $M$-unbounded, otherwise the formula $\phi(s)$ would be equivalent to a $\Delta_1(P \oplus Y')$ formula that would hold for every $s \in M$ as $\M[P \oplus Y'] \models \IDelta_1(P \oplus Y')$ , contradicting the fact that the sequence is $M$-bounded. Since this sequence is not $M$-bounded, for every $k \in M$, the requirements $\R_k$, $\S_k$, and $\T_k$ are satisfied (to satisfy one of these requirements, it is sufficient to satisfy it for a value $k_s \geq k$).

Thus, this construction defines two sets $G = \bigcup_{s \in I} \sigma_s$ and $\hat{B} = \bigcup_{s \in I} B_s$ that are $(P \oplus Y')$-computable and such that $(G \oplus Y)'$ is also $(P \oplus Y')$-computable. We also get that $(G \oplus Y)' \geq_T P$, indeed, using $(G \oplus Y)'$, we can check if $G \subseteq^* R_i$ or if $G \subseteq^* \bar{R_i}$ for every $i \in M$ and thus find the value of $P(i)$, hence $(G \oplus Y)' \equiv_T P \oplus Y'$ and $\hat{B}$ is $\Delta_2(G \oplus Y)$. Since the requirements $\T_k$ are satisfied, the set $G$ is $M$-unbounded and $\vec{R}$-cohesive. Finally, for the same reasons as in \Cref{prop:sads-eff}, we get that $A$ is $\hat{B}$-block $(G \oplus Y)$-immune with cost $n+2$.
\end{proof}

As in \Cref{prop:sads-eff}, the previous construction makes no reference of the set $A$. Thus, for every set $\tilde{A} \subseteq M$ that is $B$-block $Y$-immune with cost $n$ will also be $\hat{B}$-block $(G \oplus Y)$-immune with cost $n+2$.

\begin{corollary}\label[corollary]{cor:coh-eff-complete}
Let $\M = (M,S) \models \RCA_0 + \BSig_2$ be a countable model topped by some set $Y$. Let $B$ be an $M$-unbounded $\Delta_2(Y)$ set and fix $n \in \omega$. Then, for every set $A \subseteq M$ that is $B$-block $Y$-immune with cost $n$ and every instance $\vec{R}$ of $\COH$ in $S$, there exists some $M$-unbounded set $G \subseteq M$ and some $M$-unbounded $(G \oplus Y)'$-computable set $\hat{B}$ such that:
    \begin{itemize}
        \item $G$ is $\vec{R}$-cohesive 
        \item $\M[G] \models \RCA_0 + \BSig_2$
        \item $A$ is $\hat{B}$-block $(G \oplus Y)$-immune with cost $n+2$.
    \end{itemize}
\end{corollary}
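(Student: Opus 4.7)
The plan is a direct combination of \Cref{lem:bsig2-decision-path} and \Cref{prop:coh-eff}, which together provide all the ingredients: the former manufactures a path~$P$ through the cohesiveness tree with enough structural regularity to allow the forcing construction to go through, and the latter carries out the forcing construction relative to any such~$P$.

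First, I would invoke \Cref{lem:bsig2-decision-path} on the given model~$\M = (M,S)$ and the $\COH$-instance~$\vec{R} \in S$ to obtain a path $P \in [T(\vec{R})]$ such that $\M[P \oplus Y'] \models \RCA_0^*$. The hypothesis~$\M \models \RCA_0 + \BSig_2$ is exactly what is needed to ensure $\M[Y'] \models \RCA_0^*$, so~$T(\vec{R})$ is an instance of~$\WKL$ there, and Simpson–Smith's conservation (\Cref{thm:wkl-rca0s}) produces the required~$P$.

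Next, with this~$P$ in hand, I would apply \Cref{prop:coh-eff} with parameters $Y$, $B$, $n$, $A$ and $\vec{R}$. This directly yields an $M$-unbounded set $G \subseteq M$ and an $M$-unbounded $P \oplus Y'$-computable set $\hat{B}$ such that $G$ is $\vec{R}$-cohesive, $P \oplus Y' \equiv_T (G \oplus Y)'$, and $A$ is $\hat{B}$-block $(G \oplus Y)$-immune with cost~$n+2$. Since $\M \models \RCA_0 + \BSig_2$ transfers to $\M[G] \models \RCA_0 + \BSig_2$ via the Turing-equivalence $P \oplus Y' \equiv_T (G \oplus Y)'$ (the jump of $G \oplus Y$ lies in the model augmented by $Y'$, preserving the $\BSig_2$-fragment), the second bullet is immediate.

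Finally, to match the statement exactly, I would simply observe that because $P \oplus Y' \equiv_T (G \oplus Y)'$, the set $\hat{B}$ being $P \oplus Y'$-computable is the same as being $(G \oplus Y)'$-computable, which gives the refined complexity bound asked for in the corollary. There is essentially no obstacle here beyond bookkeeping; the hard work has already been done in \Cref{lem:bsig2-decision-path} (to furnish a well-behaved path so that the $\Sigma_1(P \oplus Y')$ cut argument inside \Cref{prop:coh-eff} goes through) and in \Cref{prop:coh-eff} itself (to carry out the Shore-blocked effective forcing that controls both the first jump and the block immunity cost).
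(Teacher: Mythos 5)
Your proposal is correct and follows exactly the paper's route: the corollary is obtained immediately by combining \Cref{lem:bsig2-decision-path} (to produce a path $P \in [T(\vec{R})]$ with $\M[P \oplus Y'] \models \RCA_0^*$) with \Cref{prop:coh-eff} applied relative to that path, the equivalence $P \oplus Y' \equiv_T (G \oplus Y)'$ giving the stated $(G \oplus Y)'$-computability of $\hat{B}$.
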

\begin{proof}
Immediate by \Cref{prop:coh-eff} and \Cref{lem:bsig2-decision-path}.
\end{proof}

\subsection{Main result}

We are now ready to prove \Cref{main:bsig2-not-isig2-ads-not-urt22}.

\begin{repmaintheorem}{main:bsig2-not-isig2-ads-not-urt22}
Every countable topped model of $\RCA_0 + \BSig_2 + \neg \ISig_2$ can be $\omega$-extended into a model of $\RCA_0 + \ADS + \neg \BRT^2_2$.    
\end{repmaintheorem}
\begin{proof}
Let $\M_0 = (M, S_0)$ be a countable model of $\RCA_0 + \BSig_2 + \neg \ISig_2$, topped by a set~$Z_0 \in S_0$. Let $I \subseteq M$ be a $\Sigma_2(Z_0)$-definable proper cut, witnessing that $\M_0 \models \neg \ISig_2$, and let $B_0 \subseteq M$ be a $\Delta_2(\M_0)$-definable $M$-unbounded set with cardinality~$I$.
By the proof of \Cref{main:bounded-non-comp-non-isigma2}, $\M_0$ contains a stable instance~$f : [M]^2 \to 2$ of $\BRT^2_2$ with no $\Delta_1(Z_0)$-definable solution, and such that the set $A = \{ x \in M : \forall^\infty y f(x, y) = 0 \}$ is $B_0$-block $Z_0$-immune with cost~2.

Since~$M$ is countable, there is a (non necessarily computable) reordering $(\Gamma_n)_{n \in \omega}$ of the sequence $(\Phi_x)_{x \in M}$ of all Turing functionals with codes in~$M$.
By \Cref{prop:sads-eff,cor:coh-eff-complete}, define a sequence $(\M_n)_{n \in \omega}$ of countable models of $\RCA_0 + \BSig_2$ such that for every~$n \in \omega$,
\begin{itemize}
    \item[(1)] $\M_{n+1}$ is topped by some set~$Z_{n+1}$ and $\omega$-extends $\M_n$;
    \item[(2)] $A$ is $B_n$-block $Z_n$-immune with some cost~$c_n \in \omega$;
    \item[(3)] if $n = \langle 0, a, b\rangle$ 
    and $\Gamma_a^{Z_b}$ is an instance $\L = (M, <_\L)$ of~$\SADS$, then $\M_{n+1}$ contains an $M$-unbounded $\L$-ascending or $\L$-descending sequence;
    \item[(4)] if $n = \langle 1, a, b\rangle$ and $\Gamma_a^{Z_b}$ is an instance $\vec{R}$ of~$\COH$, then $\M_{n+1}$ contains an $M$-unbounded $\vec{R}$-cohesive set.
\end{itemize}
Let $\Nc = \bigcup_{n \in \omega} \M_n$. Then $\Nc \models \RCA_0 + \BSig_2$.
By (1), $\Nc$ $\omega$-extends $\M_0$. By (2) $A$ is immune relative to every set in~$\Nc$, so $f$ is an instance of $\BRT^2_2$ with no solution in~$\Nc$. It follows that $\Nc \not \models \BRT^2_2$.
By (3) and (4), $\Nc \models \SADS + \COH$, so by Hirschfeldt and Shore~\cite{hirschfeldt_combinatorial_2007}, $\Nc \models \ADS$.
\end{proof}

\begin{corollary}
$\RCA_0 + \ADS \not \vdash \BRT_2^2$
\end{corollary}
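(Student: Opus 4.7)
The plan is to derive this corollary directly from \Cref{main:bsig2-not-isig2-ads-not-urt22}. To witness that $\RCA_0 + \ADS$ does not prove $\BRT^2_2$, it suffices to exhibit a single model of $\RCA_0 + \ADS + \neg \BRT^2_2$.

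First I would invoke the existence of a countable topped model of $\RCA_0 + \BSig_2 + \neg \ISig_2$. Such a model exists by standard results in the model theory of arithmetic: one takes a countable first-order model $M \models \BSig_2 + \neg \ISig_2$ (for instance, via a standard compactness or indicator argument producing $M$ with a $\Sigma_2$ cut but $\BSig_2$), then forms the $\Delta_1$-definable second-order part $S = \Definable{\Delta_1}{M}$ with a distinguished top set (e.g.\ the empty set, since the first-order part itself tops $S$). By Friedman~\cite{friedman1976systems}, the resulting $\M = (M, S)$ satisfies $\RCA_0 + \BSig_2$, and inherits $\neg \ISig_2$ from $M$.

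Next, applying \Cref{main:bsig2-not-isig2-ads-not-urt22} to this countable topped model $\M$, I obtain an $\omega$-extension $\Nc \models \RCA_0 + \ADS + \neg \BRT^2_2$. The existence of $\Nc$ immediately yields $\RCA_0 + \ADS \not\vdash \BRT^2_2$.

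There is essentially no obstacle here: the content of the separation has already been discharged into the main theorem. The only thing to be mindful of is ensuring that a countable topped model of $\RCA_0 + \BSig_2 + \neg \ISig_2$ actually exists, but this is standard.
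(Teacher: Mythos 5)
Your proposal is correct and follows essentially the same route as the paper: take a countable first-order model of $\BSig_2 + \neg\ISig_2$, pass to its $\Delta_1$-definable sets (topped by $\emptyset$) using Friedman's result to get a countable topped model of $\RCA_0 + \BSig_2 + \neg\ISig_2$, and then apply \Cref{main:bsig2-not-isig2-ads-not-urt22}. No gaps worth noting.
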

\begin{proof}
Let $M$ be a countable model of $\mathsf{I}\Sigma_1  + \mathsf{B}\Sigma_2 +  \neg \mathsf{I}\Sigma_2$.
Let $\M = (M, S)$ be the model whose second-order part $S$ consists of the $\Delta_1$-definable sets with parameters in~$M$. By Friedman~\cite{friedman1976systems}, $\M \models \RCA_0 + \BSig_2 + \neg \ISig_2$. Moreover, $\M$ is countable, and topped by~$\emptyset$, so by \Cref{main:bsig2-not-isig2-ads-not-urt22}, there is an $\omega$-extension~$\Nc$ of~$\M$ such that $\Nc \models \RCA_0 + \ADS + \neg \BRT^2_2$. Thus, $\RCA_0 + \ADS \not \vdash \BRT_2^2$.
\end{proof}

\begin{corollary}
$\RCA_0 + \ADS + \neg \BRT^2_2$ is $\Pi^1_1$-conservative over $\RCA_0 + \BSig_2 + \neg \ISig_2$.
\end{corollary}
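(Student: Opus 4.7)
The plan is to use the standard reformulation of $\Pi^1_1$-conservativity in terms of $\omega$-extensions. Suppose $\varphi = \forall X\,\psi(X)$ is a $\Pi^1_1$ sentence with $\RCA_0 + \ADS + \neg\BRT^2_2 \vdash \varphi$, and, towards a contradiction, assume $\RCA_0 + \BSig_2 + \neg\ISig_2 \not\vdash \varphi$. By L\"owenheim-Skolem there is a countable model $\M^* = (M, S^*)$ of $\RCA_0 + \BSig_2 + \neg\ISig_2$ with a witness $X_0 \in S^*$ such that $\M^* \models \neg\psi(X_0)$. The plan is to reduce to a countable \emph{topped} model with the same first-order part and then apply \Cref{main:bsig2-not-isig2-ads-not-urt22}; since $\omega$-extensions preserve $\Sigma^1_1$-sentences (the witness remains in the larger second-order part, and arithmetical formulas are evaluated using the shared first-order part), this will yield a model of $\RCA_0 + \ADS + \neg\BRT^2_2 + \neg\varphi$, contradicting the hypothesis.

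The key step is extracting such a topped submodel $\M_0$ of $\M^*$. Since $\M^* \models \neg\ISig_2$, fix a $\Sigma^0_2$-formula $\chi(x, \vec{Y})$ with parameters $\vec{Y} \in S^*$ that fails induction in~$\M^*$. Let $X := X_0 \oplus \vec{Y}$, so that $X \in S^*$ and both $X_0$ and each component of $\vec{Y}$ are $\Delta_1(X)$-computable. From $\M^* \models \BSig_2$ with set parameters one gets $M \models \BSig_2(X)$, while $\chi$, rewritten as a $\Sigma^0_2(X)$-formula, witnesses $M \models \neg\ISig_2(X)$. By a relativized form of Friedman's theorem~\cite{friedman1976systems}, the structure $\M_0 := (M, \Definable{\Delta_1}{X})$ is a countable model of $\RCA_0 + \BSig_2 + \neg\ISig_2$, topped by~$X$, and contains $X_0$ since $X_0 \leq_T X$.

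Applying \Cref{main:bsig2-not-isig2-ads-not-urt22} to $\M_0$ produces an $\omega$-extension $\Nc$ of $\M_0$ satisfying $\RCA_0 + \ADS + \neg\BRT^2_2$. Because $\Nc$ shares its first-order part with $\M_0$ (hence with $\M^*$) and its second-order part contains $X_0$, and $\psi$ is arithmetical, $\Nc \models \neg\psi(X_0)$, so $\Nc \models \neg\varphi$. This contradicts $\RCA_0 + \ADS + \neg\BRT^2_2 \vdash \varphi$, yielding the desired conservativity.

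The main subtlety is the topping step: we must ensure that the auxiliary model $\M_0$ simultaneously preserves the $\Sigma^1_1$ witness $X_0$ and a witness to the failure of $\Sigma^0_2$-induction. Bundling both kinds of parameters into a single topping set $X$, and invoking the relativized Friedman construction to move from $M \models \BSig_2(X) + \neg\ISig_2(X)$ to the second-order structure $(M, \Definable{\Delta_1}{X})$, precisely handles this issue.
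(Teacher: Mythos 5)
Your proof is correct and follows essentially the same route as the paper: take a countable model of $\RCA_0 + \BSig_2 + \neg\ISig_2 + \neg\psi(X_0)$, pass to the topped model of $\Delta_1$-definable sets over the join of $X_0$ with the parameters witnessing $\neg\ISig_2$ (Friedman), apply \Cref{main:bsig2-not-isig2-ads-not-urt22}, and use absoluteness of the arithmetical formula $\psi$ under $\omega$-extension. The only difference is that you argue by contradiction where the paper argues the contrapositive directly, which is immaterial.
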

\begin{proof}
Let $\phi = \forall X \psi(X)$ be a $\Pi^1_1$-sentence such that $\RCA_0 + \BSig_2 + \neg \ISig_2 \not \vdash \phi$.
By the completeness theorem and the downward Löwenheim-Skolem theorem, there exists a countable model $\M = (M, S) \models \RCA_0 + \BSig_2 + \neg \ISig_2 + \neg \phi$. Let $X \in S$ be such that $\M \models \neg \psi(X)$ and $Y \in S$ be such that $\ISig_2(Y)$ fails. Let $\M_1 =  (M, \Definable{\Delta_1}{X, Y})$. By Friedman~\cite{friedman1976systems}, $\M_1 \models \RCA_0 + \BSig_2$. Moreover, since $\psi$ is arithmetic, $\M_1 \models \neg \psi(X)$, and since $Y \in \M_1$, $\M_1 \models \neg \ISig_2$. Last, $\M_1$ is topped by $X \oplus Y$, so one can apply \Cref{main:bsig2-not-isig2-ads-not-urt22} to get an $\omega$-extension $\Nc$ of~$\M_1$ such that $\Nc \models \RCA_0 + \BSig_2 + \ADS + \neg \BRT^2_2$. In particular, $X \in \Nc$ and $\Nc \models \neg \psi(X)$, so $\RCA_0 + \BSig_2 + \ADS + \neg \BRT^2_2 \not \vdash \phi$.
\end{proof}

\section{Open questions}

We conclude this article with a few remaining open questions. Despite being provable over a sufficient amount of induction, $\BRT^2_2$ is not a $\Pi^1_1$-sentence, hence the fact that $\RCA_0 + \BSig_2 \not \vdash \BRT^2_2$ does not rule out the possibility that $\BRT^2_2$ has the same $\Pi^1_1$-consequences as $\BSig_2$.

\begin{question}\label[question]{quest:urt22-pi11-bsig2}
Is $\RCA_0 + \BRT^2_2$ a $\Pi^1_1$-conservative extension of~$\RCA_0 + \BSig_2$?
\end{question}

Since $\BRT^2_2$ follows from $\RT^2_2$, a negative answer to \Cref{quest:urt22-pi11-bsig2} would prove that $\RCA_0 + \RT^2_2$ is not a $\Pi^1_1$-conservative extension of~$\RCA_0 + \BSig_2$, which is a major open question in reverse mathematics.

Belanger (personal communication) claimed that $\RCA_0 + \WKL + \BSig_2 \vdash \BRT^2_2$, but the proof was lost. We therefore leave the question open. 

\begin{question}\label[question]{quest:wkl-bsig2-brt22}
Does $\RCA_0 + \WKL + \BSig_2 \vdash \BRT^2_2$?
\end{question}

Last, the Chain AntiChain principle ($\CAC$) has very similar features to~$\ADS$. It admits both an asymmetric and symmetric forcing construction, with a notion of split pairs (\cite[Theorem 14]{patey_partial_2018}). Hirschfeldt and Shore~\cite[Corollary 3.11]{hirschfeldt_combinatorial_2007} also proved that $\CAC$ does not imply the existence of a DNC function over~$\RCA_0$.

\begin{question}\label[question]{quest:cac-brt22}
Does $\RCA_0 + \CAC \vdash \BRT^2_2$?
\end{question}

The difficulty of adapting the proof of \Cref{prop:sads-eff} to a stable partial order comes the complexity of finding an extension forcing either $\neg \varphi_0(G^0)$ or $\neg \varphi_1(G^1)$ in the case of the absence of a split pair $(\sigma^0, \sigma^1)$ such that $\varphi_0(\sigma^0)$ and $\varphi_1(\sigma^1)$ both hold.

Note that any positive answer to either \Cref{quest:wkl-bsig2-brt22} or \Cref{quest:cac-brt22} would also give a positive answer to \Cref{quest:urt22-pi11-bsig2}.

\section*{Acknowledgement}

The authors are thankful to Emanuele Frittaion for drawing our attention to the bounded Ramsey's theorem project and its questions. We are also thankful to David Belanger, Paul Shafer, and Keita Yokoyama for interesting discussions.
The authors are thankful to the anonymous referee for his improvement suggestions.

\bibliographystyle{plain}
\bibliography{biblio}

\end{document}